%% file: arxiv.tex
\documentclass{amsart}

\usepackage{amssymb}
\usepackage{bbm}
\usepackage[margin=1.2in]{geometry}
\usepackage[colorlinks=true, linkcolor=black, citecolor=black, urlcolor=blue]{hyperref}

\newenvironment{ack}{\medskip\noindent\textbf{Acknowledgments.}\enspace}{\medskip}
\newenvironment{funding}{\noindent\textbf{Funding.}\enspace}{\medskip}

\renewcommand{\paragraph}[1]{\medskip\noindent\textbf{#1.}\,}

\input{macros}

\setlist{leftmargin=*,labelindent=0.5em}

\renewcommand{\kbb}{\mathbbm{k}}

\begin{document}

\title{An Algebraic Introduction to Persistence}

\author{Ulrich Bauer}
\address{U.~Bauer: Department of Mathematics, TUM School of Computation, Information and Technology, Technical University of Munich (TUM), Munich, Germany; Munich Center for Machine Learning (MCML), Munich, Germany; and Munich Data Science Institute (MDSI), Technical University of Munich (TUM), Munich, Germany}
\email{ulrich.bauer@tum.de}

\author{Thomas Br\"ustle}
\address{T.~Br\"ustle: Department of Mathematics, Bishop's University, Sherbrooke, QC, Canada; and D\'epartement de math\'ematiques, Universit\'e de Sherbrooke, Sherbrooke, QC, Canada}
\email{tbruestl@ubishops.ca}

\author{Luis Scoccola\textsuperscript{*}}
\address{L.~Scoccola: Laboratoire d'Alg\`ebre, de Combinatoire et d'Informatique Math\'ematique, Universit\'e du Qu\'ebec \`a Montr\'eal, Montr\'eal, QC, Canada}
\email{luis.scoccola@gmail.com}

\subjclass[2020]{55N31, 18G35 (primary); 16G20 (secondary)}
\keywords{poset representation, persistence module, persistent homology, interleaving, barcode, Betti table, relative homological algebra, string algebra, tilting theory, sheaf, derived category}

\thanks{\textsuperscript{*}Corresponding author.}

\thanks{Thomas Br\"ustle was partially supported by NSERC Discovery Grant RGPIN-2025-05047, as well as Bishop's University and Universit\'e de Sherbrooke.}

\begin{abstract}
\input{abstract}
\end{abstract}

\maketitle
\tableofcontents
\newpage

\input{content-main}

\input{content-appendix}

\bibliographystyle{amsalpha}
\bibliography{persistence-survey}

\end{document}

%% file: macros.tex
\usepackage{mathtools}
\usepackage{mathrsfs}
\makeatletter
\@ifpackageloaded{enumitem}{}{\usepackage{enumitem}}
\@ifpackageloaded{xcolor}{}{\usepackage{xcolor}}
\makeatother
\usepackage{tikz}
\usetikzlibrary{arrows.meta,decorations.pathreplacing,calc,positioning}
\usepackage{aliascnt}

\newcommand{\kbb}{\mathbb{k}}
\newcommand{\Pcal}{\mathcal{P}}

\newcommand{\Veck}{\mathrm{Vec}}
\newcommand{\veck}{\mathrm{vec}}

\newcommand{\RepP}{\Veck^{\,\Pcal}}
\newcommand{\repP}{\veck^{\,\Pcal}}

\newcommand{\repR}{\veck^{\,\R}}

\newcommand{\RepRn}{\Veck^{\,\R^n}}

\newcommand{\rep}{\mathrm{rep}}

\newcommand{\R}{\mathbb{R}}
\newcommand{\Z}{\mathbb{Z}}
\newcommand{\N}{\mathbb{N}}

\renewcommand{\epsilon}{\varepsilon}
\renewcommand{\phi}{\varphi}

\def\Top{\mathrm{Top}}

\makeatletter
\newcommand\DEFINEALPHABETLOOP[3]{%
  \ifx\relax#3\expandafter\@gobble\else\expandafter\@firstofone\fi
  {\expandafter\providecommand\expandafter*\csname#3#1\endcsname{#2{#3}}%
   \DEFINEALPHABETLOOP{#1}{#2}}%
}%
\newcommand\Definealphabet[2]{%
  \DEFINEALPHABETLOOP{#1}{#2}abcdefghijklmnopqrstuvwxyzABCDEFGHIJKLMNOPQRSTUVWXYZ\relax
}%
\makeatother
\Definealphabet{bb}{\mathbb}
\Definealphabet{sf}{\mathsf}
\Definealphabet{scr}{\mathscr}
\Definealphabet{cal}{\mathcal}

\renewcommand{\mod}{\mathrm{mod}}

\DeclareMathOperator{\Int}{Int}
\let\rep\undefined
\DeclareMathOperator{\rep}{rep}
\newcommand{\define}[1]{\textbf{#1}}
\let\coker\undefined
\DeclareMathOperator{\coker}{coker}
\DeclareMathOperator{\Span}{Span}
\DeclareMathOperator{\End}{End}
\DeclareMathOperator{\Hom}{Hom}
\DeclareMathOperator{\Ext}{Ext}
\DeclareMathOperator{\Tor}{Tor}
\newcommand{\Ab}{\mathrm{Ab}}
\newcommand{\rk}{\mathrm{rk}}
\newcommand{\rkit}{\emph{rk}}
\newcommand{\sprd}{\emph{Sprd}}
\newcommand{\im}{\mathrm{Im}}

\let\Lcal\undefined
\newcommand{\Lcal}{\mathcal{L}}

\newcommand{\pfd}{\ensuremath{\mathit{pfd}}}
\newcommand{\fp}{\ensuremath{\mathit{fp}}}
\newcommand{\PH}{\mathrm{PH}}
\newcommand{\EPH}{\mathrm{EPH}}
\newcommand{\RISH}{\mathrm{RISH}}
\newcommand{\spl}{\ensuremath{\mathit{spl}}}
\newcommand{\dimhom}{\mathrm{dimhom}}
\newcommand{\op}{\mathrm{op}}

\theoremstyle{definition}
\newtheorem{definition}{Definition}[section]

\newaliascnt{example}{definition}

\aliascntresetthe{example}

\newaliascnt{remark}{definition}
\newtheorem{remark}[remark]{Remark}
\aliascntresetthe{remark}

\newaliascnt{notation}{definition}
\newtheorem{notation}[notation]{Notation}
\aliascntresetthe{notation}

\theoremstyle{plain}
\newaliascnt{theorem}{definition}
\newtheorem{theorem}[theorem]{Theorem}
\aliascntresetthe{theorem}

\newaliascnt{lemma}{definition}
\newtheorem{lemma}[lemma]{Lemma}
\aliascntresetthe{lemma}

\newaliascnt{proposition}{definition}
\newtheorem{proposition}[proposition]{Proposition}
\aliascntresetthe{proposition}

\newaliascnt{corollary}{definition}
\newtheorem{corollary}[corollary]{Corollary}
\aliascntresetthe{corollary}

\usepackage{cleveref}

\crefname{theorem}{Theorem}{Theorems}
\crefname{lemma}{Lemma}{Lemmas}
\crefname{proposition}{Proposition}{Propositions}
\crefname{corollary}{Corollary}{Corollaries}
\crefname{definition}{Definition}{Definitions}
\crefname{example}{Example}{Examples}
\crefname{remark}{Remark}{Remarks}
\crefname{notation}{Notation}{Notations}
\crefname{section}{Section}{Sections}
\crefname{figure}{Figure}{Figures}
\crefname{equation}{Equation}{Equations}

\def\noteson{%
\gdef\luis##1{\noindent{\color{blue}[Luis: ##1]}}%
\gdef\uli##1{\noindent{\color{red}[Uli: ##1]}}%
\gdef\thomas##1{\noindent{\color{olive}[Thomas: ##1]}}%
\gdef\todo##1{\noindent{\color{violet}[to do: ##1]}}}

\noteson

\newsavebox{\captionlistbox}
\ifx\captionindent\undefined
  \newlength{\captionindent}
\fi

%% file: abstract.tex
We introduce persistence with an emphasis on its algebraic foundations, using the representation theory of posets.
Linear representations of posets arise in several areas of mathematics,
including the representation theory of quivers and finite dimensional algebras,
Morse theory and other areas of geometry,
as well as topological inference and topological data analysis---often via persistent homology.
In some of these contexts,
the category of poset representations of interest admits a metric structure given by the so-called interleaving distance.
Persistence studies
the algebraic properties of these poset representations
and their behavior under perturbations in the interleaving distance.
We survey fundamental results in the area and applications to pure and applied mathematics, as well as theoretical challenges and open questions.

%% file: content-main.tex
\section{Introduction}

The main goal of this article is to present the theories of one-parameter persistence and of multiparameter persistence.
Briefly, these theories study the representations of total orders such as $\Rbb$, and of products of total orders, respectively.
Such representations arise naturally when studying fundamental geometric objects such as
real valued functions on topological spaces,
and weighted graphs, such as Vietoris--Rips complexes of metric spaces.
More context and motivations are given throughout the article.

\paragraph{Structure of the article}
In \cref{section:fundamentals-persistence} we present the fundamentals of persistence from the algebraic perspective.
We recall the notion of linear representation of a poset
(\cref{section:poset-representations}),
we introduce \emph{persistent homology}
(\cref{section:persistent-homology}),
we introduce the \emph{interleaving distance} and the main theoretical results of one-parameter persistence
(\cref{section:fundamentals-one-parameter-persistence}),
and we give a bit of history (\cref{section:history}).

In \cref{section:motivations-applications}, we survey motivations and applications of persistence coming from two main sources:
data analysis (\cref{section:persistence-data-analysis})
and geometry and analysis (\cref{section:persistence-geometry-analysis}).

In the next three sections, which can be read independently of each other, we introduce three advanced topics.
First, we present \emph{level set persistent homology} (\cref{section:levelset-persistence}), which goes beyond one-parameter persistence, while still admitting a well-understood theory.
Second, we overview \emph{multiparameter persistence} (\cref{section:multiparameter-persistence}),
an active area of research with several open questions.
Finally, we overview the representation theory of posets as it relates to persistence, in particular in regards to the representation theory of infinite posets (\cref{section:challenges-infinite-continuous}).

We conclude with \cref{section:computational-aspects}, which 
overviews the time and space complexity of several constructions in persistence.

\paragraph{What is not covered}
Although persistence is a relatively new area, it is not possible to cover all of its aspects in detail here.
We can only scratch the surface regarding some of the topics we do cover, and there are several other topics that we do not cover at all, including the following.
\begin{itemize}
\item 
The statistical aspects of persistent homology \cite{fasy-et-al,bobrowski-kahle-skraba,turner-et-al,hiraoka-et-al,lim,bobrowski2024universality}.
\item
The theory of Reeb graphs and merge trees (a.k.a.~contour trees or dendrograms), which are essentially persistent sets, i.e., set-valued functors on posets \cite{morozov-et-al,desilva-munch-patel,cardona-et-al}.
\item
The connections between persistence and discrete Morse theory \cite{mischaikow-nanda,bauer-roll}.
\item
The connections between persistence and microlocal sheaf theory \cite{kashiwara-schapira,kashiwara-schapira-2,miller-stratification}.
\item
The persistence theory of filtered chain complexes \cite{chacholski-giunti-landi,memoli-zhou}.
\item
Persistent homotopy theory \cite{blumberg-lesnick,jardine2020persistent,lanari-scoccola,cardona,medina-zhou,mader-waas,oudot2026function}.
\end{itemize}

\paragraph{Other surveys}
There already exist several good introductions to different aspects of persistence.
For one-parameter persistence and its connections to computational geometry and data analysis, we refer to~\cite{edelsbrunner-harer,edelsbrunner-harer-2,oudot-book,chazal-structure-stability,dey-wang} for longer manuscripts,
and to~\cite{weinberger,carlsson,edelsbrunner2017persistent,ghrist,chazal2021introduction} for shorter notes.
For the connections between one-parameter persistence and geometry and analysis, see~\cite{polterovich-rosen-samvelyan-zhang,weinberger-2}.
Multiparameter persistence from the perspective of applied topology and topological inference is treated in~\cite{botnan-lesnick-survey,lesnick-course};
there also exists specialized introductions focusing on connections with
order theory~\cite{kim-memoli-survey},
representation theory~\cite{blanchette-brustle-hanson-survey},
and differential calculus~\cite{oudot2024differential}.

\begin{ack}
We thank Uzu Lim, Fernando Martin, and Théo Prosper for detailed feedback on an early draft.
We also thank Raphael Bennett-Tennenhaus, Håvard Bjerkevik, Jan Jendrysiak, Vidit Nanda, Steve Oudot, Baptiste Rognerud, and Matthias Söls for their helpful comments and suggestions.
We thank Won Seong for spotting a gap in an earlier version of the no-go stability result in \cref{section:multiparameter-bottleneck}.
\end{ack}

\begin{funding}
  Thomas Brüstle was partially supported by NSERC Discovery Grant RGPIN-2025-05047, as well as Bishop's University and Université de Sherbrooke.
\end{funding}

\section{Fundamentals of Persistence}
\label{section:fundamentals-persistence}

\subsection{Poset Representations}
\label{section:poset-representations}

A $\kbb$-linear \define{representation} of a poset $\Pcal$ is a functor of the form $M\colon \Pcal \to \Veck$, where $\Veck$ denotes the category of vector spaces over a field $\kbb$, and the poset $\Pcal$ is seen as a category with one object for each element of $\Pcal$, a single morphism from $x$ to $y$ when $x \leq y \in \Pcal$, and no other morphisms.
We write $\RepP$ for the category of representations of $\Pcal$, and $\repP$ for the full subcategory of \define{pointwise finite dimensional} representations, abbreviated \define{\pfd{}}.

In the persistence literature, representations of posets are usually known as \emph{persistence modules}.
A \emph{one-parameter persistence module} is a representation of a totally ordered set, often the poset of real numbers with the standard order $(\R,\leq)$, and a \emph{multiparameter persistence module} is a representation of a product\footnote{Recall that in a product poset the order relation is defined component-wise.} of total orders, often the poset $\R^n$, with $n \geq 2$.
Poset representations admit several interpretations that connect persistence to various areas of mathematics:

\begin{itemize}
\item
\emph{Representation Theory.}
When $\Pcal$ is a finite poset, the category $\RepP$ is equivalent to the category of modules over the incidence algebra of $\Pcal$, which is a finite dimensional algebra \cite{Simson1992,BotnanOppermannOudot2023,BlanchetteBrustleHanson2024,asashiba-et-al,botnan-et-al}.
\item
\emph{Graded Commutative Algebra.}
When $\Pcal = \Z^n$, the category $\RepP$ is equivalent to the category of $\Z^n$-graded modules over the $\Z^n$-graded polynomial ring $\kbb[x_1,\ldots,x_n]$ \cite{calrsson-zomorodian}.
Similarly, when $\Pcal = \R^n$, the category $\RepP$ is equivalent to the category of 
$\R^n$-graded modules over the graded real exponent polynomial ring on $n$ variables \cite{geist-miller,miller-siaga}.
\item
\emph{Sheaf Theory.}
The representations of any poset $\Pcal$ can be viewed as sheaves on $\Pcal$, when $\Pcal$ is endowed with the Alexandrov topology;
in the special case of $\Pcal = \R^n$, poset representations can also be mapped to usual sheaves over the $n$-dimensional Euclidean space \cite{curry,kashiwara-schapira,berkouk-petit}.
\end{itemize}

Poset representations have been of interest for a long time \cite{mitchell-2}.
Of particular importance is the work of Nazarova and Roiter~\cite{nazarova-roiter}, leading to the solution to the second Brauer--Thrall conjecture for finite-dimensional algebras \cite{nazarova-roiter-2},
and the work of Gabriel \cite{gabriel} on quivers of finite type, both of which rely on filtered representations of posets.
Although filtered representations can be mapped to representations in the sense of this paper, the representation theory of the two notions can differ significantly; see, e.g., \cite{GabrielRoiter1992,Simson1992}.

The main motivation for the development of persistence is persistent homology, a construction providing a rich source of poset representations.

\subsection{Persistent Homology}
\label{section:persistent-homology}
A \define{filtration} consists of a topological space $X$ together with a function $f : X \to \Pcal$ into a poset $\Pcal$, typically $\R^n$.
The sublevel sets of such a filtration assemble into a functor $\Scal(f) : \Pcal \to \Top$ whose value at $r \in \Pcal$ is the topological space $\{x \in X : f(x) \leq r \in \Pcal\} \subseteq X$, and whose action on $r \leq s \in \Pcal$ is just the inclusion of sublevel sets.
By post-composing the functor $\Scal(f)$ with the homology functor $H_d(- ; \kbb) : \Top \to \Veck$, one obtains the \define{sublevel set persistent homology} of $f$, denoted $\PH_d(f) \in \Veck^\Pcal$.
For convenience, we say that a filtration $f : X \to \Pcal$ is \define{\pfd{}} if its sublevel set persistent homology $\PH_d(f)$ is \pfd{} for every $d \in \N$.
See \cref{figure:function-barcode-pd} for an example of two filtrations and their persistent homology representations.

Persistent homology takes its roots in Morse theory \cite{morse,barannikov,bauer-medina-schmahl} and in data analysis
\cite{frosini-2,frosini1992measuring,frosini-3,edelsbrunner-letscher-zomorodian,carlsson2004persistence}.
In \cref{section:history,section:persistence-data-analysis,section:persistence-geometry-analysis} we elaborate on the history, motivations, and applications of persistent homology.

\subsection{One-Parameter Persistence}
\label{section:fundamentals-one-parameter-persistence}

Persistence cares about \emph{algebraic} and \emph{geometric} properties of poset representations.

\begin{figure}
    \centering
    \includegraphics[width=\linewidth]{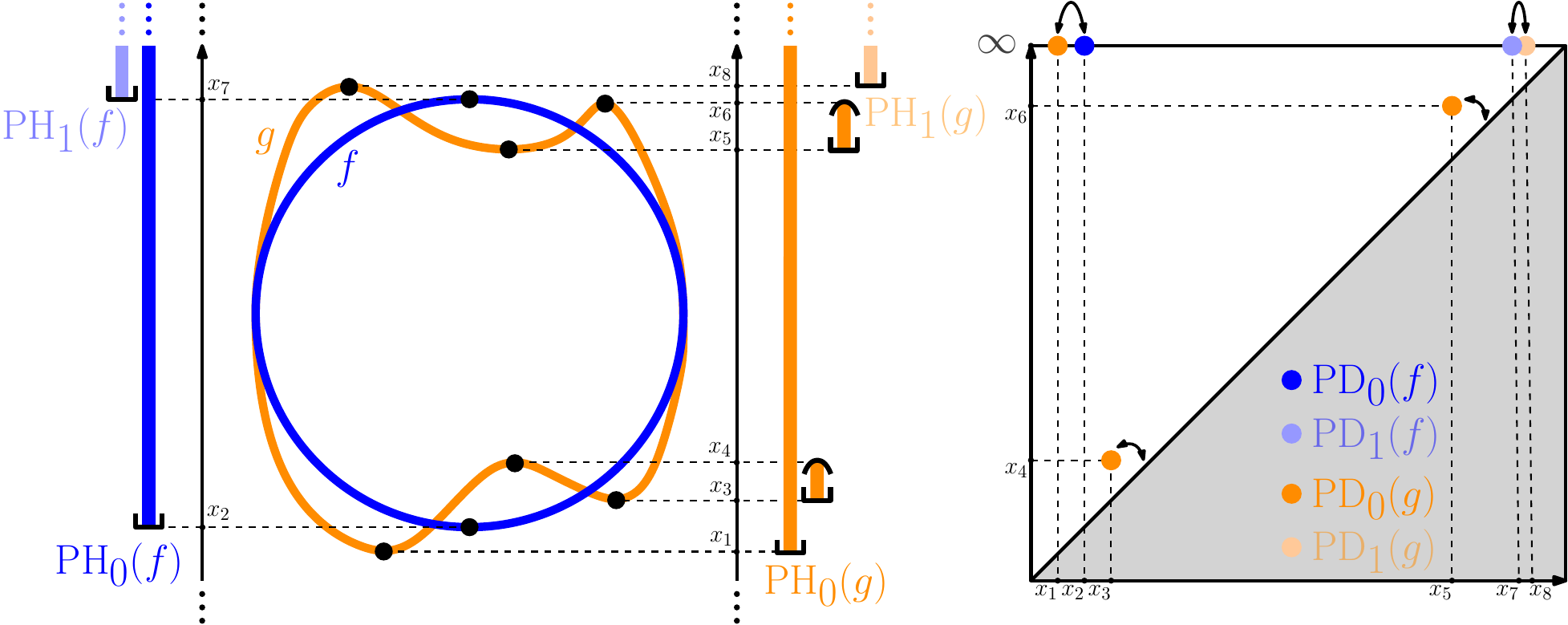}
    \caption{%
    \emph{Left.} Two real-valued functions on the circle and their persistent homologies in homological degrees zero and one, represented by their barcodes.
    The functions are represented as a projection, that is, the value of $f$ (resp.~$g$) at a point on the circle equals the projection of that point to the left (resp.~right) vertical real line.
    \emph{Right.} An alternative representation of each barcode as a \define{persistence diagram}, a multiset of points $(x,y)$ on the extended plane $[-\infty,\infty]^2$, with $x \leq y$, where each bar is represented as a point with coordinates given by the end-points of the bar.
    Depicted on the persistence diagrams is also a partial matching between the indecomposables, guaranteed to exist by the statement in \cref{equation:classical-stability-persistence}, since the two functions are point-wise close.
    When representing matchings on a persistence diagram, unmatched intervals are depicted as being matched to the diagonal; this is interpreted as the unmatched interval being matched to an empty interval, which can be thought of as an interval $[a,a)$ for some $a \in \Rbb$, thus corresponding to a point on the diagonal.
    }
    \label{figure:function-barcode-pd}
\end{figure}

On the algebraic side, the category $\Veck^\Pcal$ is an Abelian category, and often admits interesting subcategories where every object decomposes as a direct sum of indecomposable objects in an essentially unique way.
This holds, for example, for the subcategory of \pfd{} representations; see \cref{section:challenges-infinite-continuous} for more.
In the one-parameter case, the fundamental algebraic result is the following:

\medskip\noindent\textbf{Structure Theorem.}\;
\emph{Every representation $M \in \veck^\R$ decomposes uniquely as a direct sum of indecomposable representations, each of which is an \define{interval representation}, that is, the indicator representation $\kbb_I \in \repR$ for $I \subseteq \R$ an interval.}
\medskip

\noindent The canonical multiset of intervals determined by the indecomposable decomposition of a representation $M \in \repR$ is called the \define{barcode} of $M$.
See \cref{figure:function-barcode-pd} for an example.

\medskip

On the geometric side, the representations of $\R^n$ (and of other suitable posets) can be compared using \emph{interleavings}, which we now define.
The motivation for the notion of interleaving comes from the stability results in this section.

Let $\epsilon\ge0 $ in $\R$.
The \define{$\epsilon$-shift} of a representation $M\in\RepRn$, denoted $M[\epsilon]$, is the representation defined by
\[
  (M[\epsilon])_r \; \coloneqq \; M_{r+\epsilon\mathbf{1}}\; ,
  \qquad
  \varphi^{M[\epsilon]}_{r\le s}
 \;  \coloneqq \; \varphi^{M}_{r+\epsilon\mathbf{1}\,\le\,s+\epsilon\mathbf{1}}\; ,
\]
where $\varphi^M$ denotes the structure morphisms of $M$, and $\mathbf{1}=(1,\dots,1)\in\R^n$.
There is a canonical morphism of representations $\eta^M_\epsilon \colon M \longrightarrow M[\epsilon]$, whose component at $r\in\R^n$ is the structure map
$\eta^M_{\epsilon,r} := \varphi^M_{r \le r+\epsilon\mathbf{1}}$.

\medskip\noindent\textbf{Definition of Interleaving.}\;
An \define{$\epsilon$-interleaving} between representations $M,N \in \Veck^{\R^n}$ consists of morphisms
$f\colon M\to N[\epsilon]$ and $g\colon N\to M[\epsilon]$
such that
\[
  g[\epsilon]\circ f\; =\; \eta^M_{2\epsilon}\;\colon\; M\to M[2\epsilon]\;\;\;\;
  \text{and}\;\;\;\;
  f[\epsilon]\circ g\; =\; \eta^N_{2\epsilon}\;\colon\; N\to N[2\epsilon]\;.
\]
The \define{interleaving distance} is the (extended pseudo) distance on the objects of $\RepRn$ given by $d_I(M,N) := \inf\left(\{\epsilon\ge0 \mid
  \text{$M$ and $N$ are $\epsilon$-interleaved}\} \cup \{\infty\}\right)$.
\medskip

See \cref{figure:function-barcode-pd} for an example of representations at small interleaving distance.
Interleavings can be composed, which implies that the interleaving distance satisfies the triangle inequality.
Interleavings give a way of quantifying ``how isomorphic'' two representations are.
In particular, an isomorphism is the same thing as a $0$-interleaving.
An easy and instructive calculation shows the following:

\medskip\noindent\textbf{Soft Stability Theorem.}\;
\emph{If $f,g : X \to \R^n$ are any pair of filtrations on the same topological space $X$, then $d_I(\PH_d(f), \PH_d(g)) \leq \|f - g\|_\infty$ for every $d \in \N$.}
\medskip

The name of the above result comes from \cite{bubenik-silva-scott}; the corresponding ``hard'' stability theorem is algebraic/bottleneck stability, which is the forward implication in the next result,
and which, informally, says that the operation of decomposing a \pfd{} representation of $\R$ into indecomposables is stable, relating the algebra and geometry of one-parameter persistence modules:

\medskip\noindent\textbf{Isometry Theorem.}\;
\emph{Let $M,N \in \veck^\R$, and let $\epsilon \geq 0$.
The representations $M$ and $N$ are $\epsilon$-interleaved if and only if there exists a partial matching between the barcode of $M$ and the barcode of $N$, with the property that matched intervals differ by no more than $\epsilon$, and unmatched intervals are no longer than $2\epsilon$.}
\medskip

\noindent For a fully formal statement of the isometry theorem, see \cite[Theorem~6.4]{bauer-lesnick-induced} and \cref{section:history}.

A matching like the one in the Isometry Theorem is called an $\epsilon$-matching, and the infimum over $\epsilon$ for which there exists an $\epsilon$-matching is called the \define{bottleneck distance} between $M$ and $N$, denoted $d_B(M,N)$.
The isometry theorem is thus usually summarized in the following slightly weaker form:
\begin{center}
\emph{For all $M,N \in \veck^\Rbb$, we have $d_I(M,N) = d_B(M,N)$.}
\end{center}
The hard inequality is $d_B \leq d_I$, which is usually known as \emph{algebraic stability} or \emph{bottleneck stability}.
The inequality $d_I \leq d_B$ has an easy proof, and is sometimes known as \emph{converse stability}.

Combining soft stability with algebraic/bottleneck stability, one gets the following standard formulation of the stability of one-parameter persistent homology:
\begin{equation}
    \label{equation:classical-stability-persistence}
    \text{\emph{For all \pfd{} filtrations $f,g : X \to \R$}}\,,\;
    d_B\big(\,\PH_d(f)\,,\, \PH_d(g)\,\big)\,\leq\, \|f - g\|_\infty.
\end{equation}
See \cref{figure:function-barcode-pd} for an illustration of this stability result.

\subsection{A Bit of History}
\label{section:history}
The Structure Theorem may look familiar to readers acquainted with representation theory: indeed, the case of finitely presented representations is a consequence of the representation theory of quivers of type A.
Going from finitely presented to \pfd{} representations is not immediate~\cite{crawley-boevey}, but it was known to some experts before persistence was studied in its own right \cite[Section~3.6]{GabrielRoiter1992}.

The Isometry Theorem has an interesting history.
Its first general instantiation was the main result of \cite{cohen-steiner-edelsbrunner-harer}, which is the statement in \cref{equation:classical-stability-persistence}
in the special case of $X$ a finite simplicial complex and $f$ and $g$ defined simplex-wise.
The earlier paper \cite{d2003optimal} proves a particular case of this result, namely that of zero-dimensional persistent homology $d=0$.
Algebraic/bottleneck stability was first proved in \cite{chazal-socg}, and later generalized to \pfd{} representations and beyond in \cite{chazal-structure-stability,bauer-lesnick-induced}.
Converse stability was established in~\cite{lesnick-focm,bubenik-scott}.

The notion of interleaving for representations of $\R$ (one-parameter persistence) was introduced in \cite{chazal-socg}, and was later generalized to representations of $\R^n$ (multiparameter persistence) in \cite{lesnick-focm}.
The notion of interleaving admits several further generalizations, going well beyond the category of representations of $\R^n$; see for example \cite{bubenik-silva-scott,silva-munch-stefanou,scoccola2020locally,biran-cornea-zhang,mcfaddin-et-al}.

When the field $\kbb$ is a prime field, the interleaving distance satisfies a universal property that justifies its usage: it is the largest homology-invariant distance on representations of $\R^n$ that satisfies soft stability \cite[Section~5.2]{lesnick-focm}.
Universality of the interleaving distance for non-prime fields and in other settings is still an open question~\cite[Section~7]{lesnick-focm}.

More history and context are given throughout the article.

\section{Motivations and Applications}
\label{section:motivations-applications}

\subsection{Data Analysis}
\label{section:persistence-data-analysis}

Here, we mention some representative applications of persistence to data analysis.
For much more, see the DONUT database \cite{DONUT}.

Many of these applications are based on \emph{topological inference}, which we briefly recall below.
The main way in which persistence is used for topological inference is the following: one computes a barcode and interprets each bar as a topological feature; the length of a bar represents the topological prominence of the feature, so that long bars represent prominent features, while the short bars represent ``topological noise''.
This was first formulated in the generality of persistent homology in \cite{edelsbrunner-letscher-zomorodian},
but it is interesting to notice that, in the special case of zero-dimensional persistent homology, this same notion of topological prominence was introduced earlier in the context of topography, where it is used to measure how much a summit stands out from surrounding terrain \cite{fry1987defining}.

It is important to remark here that there exist many applications of persistence that are not directly based on topological inference or the above point of view.
We mention some at the end of this subsection.

\paragraph{Persistence for topological inference}
The problem here is that of estimating topological properties of spaces from incomplete information.
A classical example is \cite{niyogi-smale-weinberger}, which provides a statistically consistent algorithm for estimating the Betti numbers of a manifold from a finite sample.
This type of result usually relies on \emph{geometric complexes}, which are constructions taking as input geometric data (such as a finite sample of points in Euclidean space), and returning a filtered topological space.
Standard examples include the \emph{\v{C}ech complex} and the \emph{Vietoris--Rips complex}; see, e.g., \cite{boissonat-chazal-yvinec} for definitions.
These constructions are often stable in the \emph{Gromov--Hausdorff distance}, implying that samples of points that are close-by map to filtrations with persistent homologies that are close-by in the interleaving distance.
These stability results imply homological inference results stating that the homology of an unknown space can be estimated from a sufficiently good sample; e.g., \cite[Chapter~11]{boissonat-chazal-yvinec}, and \cite{MR2504289,MR2807544} for early examples.
Many more geometric complexes have been studied in the literature; e.g., \cite{chazal-desilva-oudot,natarajan2024morse}.
Several multiparameter geometric complexes for density-sensitive homological inference have been considered \cite{blumberg-lesnick-2}, and there are interesting open problems regarding computational complexity and polynomial-size approximations \cite{lesnick2024nerve,lesnick2024sparse}.

Several geometric problems in data analysis can be fruitfully interpreted through the lens of topological inference for different homological dimensions $d \in \Nbb$:

\begin{itemize}
\item \textbf{Clustering} $(d=0)$\textbf{.}
The notion of topological prominence and topological inference results are relevant to density-based clustering.
In \cite{chazal-JACM,rolle-scoccola}, persistence-based topological prominence is used as a way of estimating the number of clusters, with inference guarantees.
Persistence has also been used to reinterpret and reason about classical clustering algorithms such as single-linkage using one-parameter persistence \cite{carlsson-memoli}, and DBSCAN using multiparameter persistence \cite{mcinnes2017accelerated}.
\item \textbf{Circularity detection and parametrization} $(d=1)$\textbf{.}
Applications of persistence to the study of circularity in data include the analysis of periodicity in biological time series~\cite{perea-time-series} and of neural population activity in neuroscience~\cite{gardner-grid-cells,schneider-grid-cells}.
Of particular interest is the \emph{circular coordinates algorithm}~\cite{deSilva-Vejdemo-Johansson-circular} which constructs a map from a finite dataset to the circle $S^1$ that parametrizes a one-dimensional hole detected by persistent homology.
The construction relies on the representability theorem for cohomology combined with geometric inference techniques (such as Dirichlet energy estimation \cite{scoccola-toroidal}) to select canonical representatives.
\item \textbf{Void detection} $(d\geq 2)$\textbf{.}
Applications in astronomy \cite{wilding-et-al,calles-et-al} use persistence-based homological inference to detect 2-dimensional voids,
while higher-dimensional homological inference has been applied to physics \cite{sale-et-al,spitz2025topological}.
\end{itemize}

\paragraph{Applications not directly based on topological inference}
There are many applications in this category, and it is not our goal to survey all of them.
Let us only mention
quantifying connectivity and other patterns in neuroscience \cite{petri-et-al,curto2025topological},
quantifying small scale structure in material science, such as porosity and amorphous structure of glass \cite{nakamura-et-al,lee-et-al} and encoding local atomic environments in crystals for property prediction \cite{jiang2021topological},
and detecting spatial patterns in biology using one- \cite{rizvi-et-al,lawson2019persistent} and multiparameter persistence \cite{benjamin-et-al}.

\subsection{Geometry and Analysis}
\label{section:persistence-geometry-analysis}

Here, we survey motivations and applications in pure mathematics.

\paragraph{Morse theory}
One-parameter persistence offers a retrospective lens through which to view classical Morse theory.
For example, the critical values of a generic Morse function coincide with the end-points of the bars in its sublevel set barcode.
This point of view has been used to clarify and generalize constructions and results due to Morse \cite{morse}; see~\cite{bauer-medina-schmahl}.
In this context, it is interesting to observe that the barcode of a Morse function was already considered in Morse theory in \cite{barannikov}, using the language of normal forms for filtered chain complexes.
Multiparameter persistence connects to higher-dimensional Morse theory~\cite{whitney,smale,wan,gay-kirby} and Cerf theory~\cite{bubenik-catanzaro,cerri-ethier-frosini,budney-kaczynski,assif-baryshnikov}, although these connections remain largely unexplored.

\paragraph{Symplectic topology}
Filtered Floer complexes give rise to poset representations whose barcodes encode fundamental invariants.
Classical quantities including the spectral invariant~\cite{viterbo,schwarz,oh}, boundary depth~\cite{usher,usher-2}, and torsion exponents~\cite{fukaya-1,fukaya-2} translate into invariants of the barcode: infinite interval end-points~\cite{polterovich-shelukhin}, maximal finite interval length~\cite{polterovich-shelukhin}, and $k$th longest finite interval length~\cite{usher-zhang}, respectively.
This has been used to both reinterpret existing theory and prove new results~\cite{biran-cornea-zhang,shelukhin}.

\paragraph{Coarse counts in geometry}
The count of intervals in the barcode can be used to define approximations of classical geometric quantities that are stable under small perturbations of geometric data.
This approach has been used to prove coarse generalizations of fundamental results such as Courant's nodal theorem~\cite{buhovsky-et-al} and Bézout's intersection theorem~\cite{bujovsky-2}.

\paragraph{Other applications}
The reach of persistence-based methods extends further to fractal geometry~\cite{schweinhart-2}, metric geometry~\cite{lim-memoli-okutan,balitskiy-coskunuzer-memoli}, and quantitative homotopy theory~\cite{block-manin-weinberger}; an overview covering some of these topics is available in~\cite{polterovich-rosen-samvelyan-zhang}.

\section{Level Set Persistent Homology}
\label{section:levelset-persistence}

\begin{figure}
    \centering
    \includegraphics[width=\linewidth]{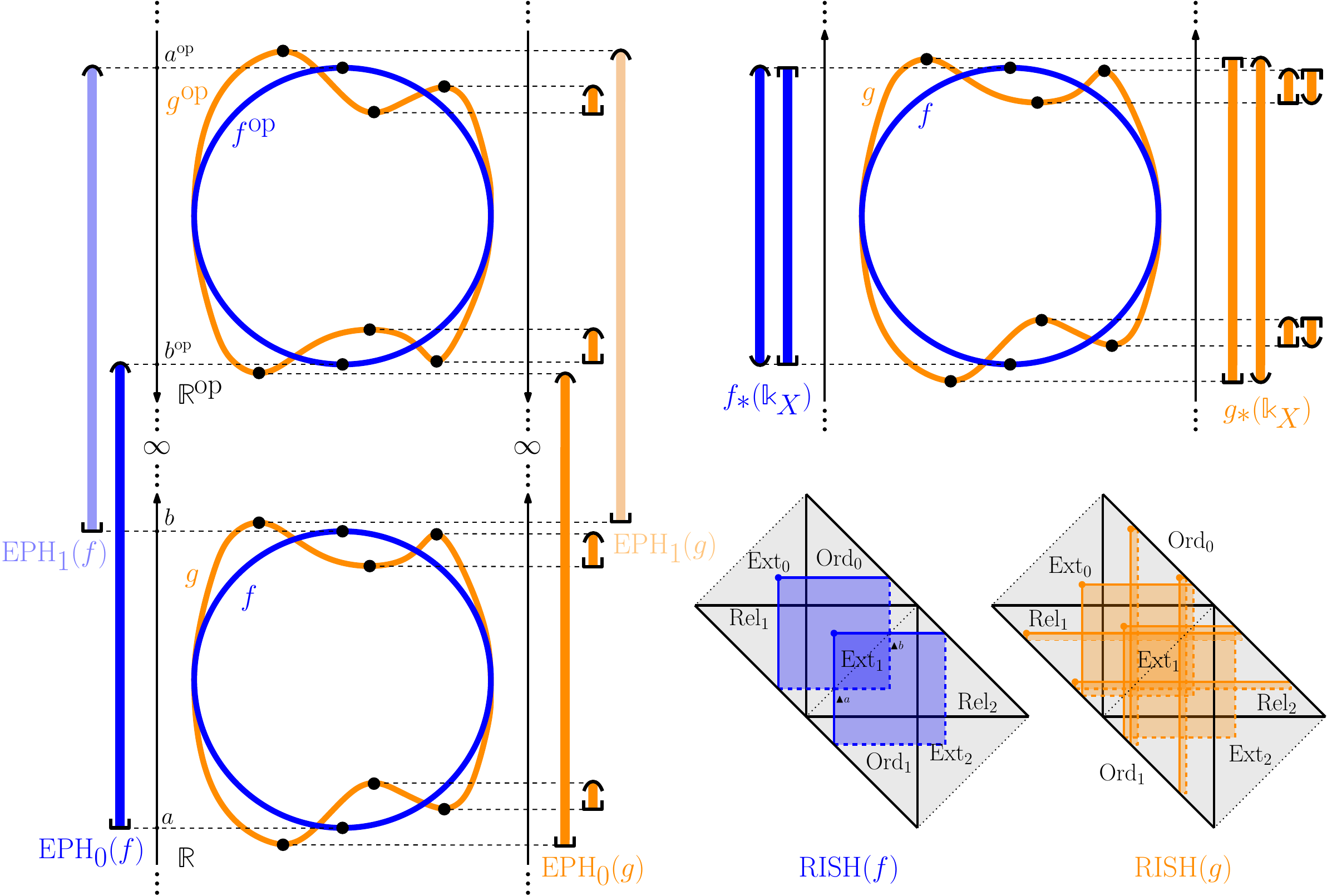}
    \caption{%
    The level set persistent homology of two functions $f,g : S^1 \to \Rbb$ encoded using extended persistence (left), derived sheaves (top right), and relative interlevel set persistence (bottom right).
    For relative interlevel set persistence we use the notation in \cite{bauer-botnan-fluhr}.
    Each construction determines the other, and satisfies a bottleneck stability result.
    Indeed, in each encoding there is a matching of bars of cost at most $\|f - g\|_\infty$. 
    The derived sheaf representation can be obtained from the relative interlevel set persistence by restriction to the dotted lines;
    similarly, the extended persistence can be obtained by restriction to a certain zig-zag; see \cite{bauer-botnan-fluhr} for details.
    }
    \label{figure:level-set-persistence}
\end{figure}

Before moving to multiparameter persistence, let us describe the theory of level set persistent homology.
This is more general than one-parameter persistent homology, but, as opposed to general multiparameter persistence, it still admits a well-understood theory of structure and bottleneck stability\footnote{From the perspective of multiparameter persistence, what we describe here is really one-parameter level set persistence. A corresponding theory of multiparameter level set persistence has not been fully developed yet, although it is tightly related to the sheaf-theoretic interpretation of persistence \cite{curry,kashiwara-schapira,berkouk-petit}.}.

Fix a real-valued function $f : X \to \Rbb$.
To capture more information about $f$, the goal is to extend the sublevel set persistent homology of $f$ from sublevel sets, i.e., sets of the form $f^{-1}((-\infty,a])$, to more general preimages, such as level sets $f^{-1}(a)$, interlevel sets $f^{-1}([a,b])$, suplevel sets $f^{-1}([a, \infty))$, and others.
Here we survey three, a posteriori, equivalent ways in which this has been done in the literature; see \cref{figure:level-set-persistence} for an illustration of these constructions.

\begin{itemize}
    \item
Define the poset
$\Lbb = \Rbb \sqcup \{\infty\} \sqcup \Rbb^\op$,
where an element is either $r \in \Rbb$, or $\infty$, or $s^\op$ with $s \in \Rbb$,
and the order is such that $r \leq r'$, $r \leq \infty$, $r \leq s^\op$, $\infty \leq s^\op$, and $s^\op \leq s'^\op$,
for all $r \leq r' \in \Rbb$ and $s' \leq s \in \Rbb$.
The \emph{extended persistent homology}
\cite{cohen-steiner-edelsbrunner-harer2009extending-persistence}
of $f$ in dimension $d \in \Nbb$
is the representation $\EPH_d(f) \in \Veck^\Lbb$ defined by
\begin{align*}
    \EPH_d(f)(r) \,&=\,
        H_d\left(\,f^{-1}\big((-\infty,r]\big)\,\right)
    \text{ for $r \in \Rbb$, and }\\
    \EPH_d(f)(s^\op) \,&=\,
        H_d\left(\,X\,;\, f^{-1}\big([s,\infty)\big)\,\right)
    \text{ for $s^\op \in \Rbb^\op$},
\end{align*}
where in the second line we are using relative homology \cite[Chapter~1,~Section~9]{munkres},
and where the structure morphisms are the natural ones.
When $\EPH_d(f)$ is \pfd{}, the Structure Theorem implies that it decomposes as a direct sum of interval representations of $\Lbb$ (since $\Lbb$ is isomorphic to $\Rbb$ as posets).
The multiset of intervals thus obtained can be represented as an \emph{extended persistence diagram}, which satisfies a bottleneck stability result \cite[Stability~Theorem]{cohen-steiner-edelsbrunner-harer2009extending-persistence}.
    \item
Let $\kbb_X \in \mathrm{D}(\mathrm{Sh}(X))$ be the constant sheaf on $X$, seen as an object of the derived category of vector space-valued sheaves on $X$.
Then, one can consider the derived pushforward $f_*(\kbb_X) \in \mathrm{D}(\mathrm{Sh}(\Rbb))$, which is now an object of the derived category of sheaves on $\Rbb$ (seen as a topological space) \cite{curry,kashiwara-schapira}.
Under certain tameness assumptions on $f$, the derived sheaf
$f_*(\kbb_X)$ is constructible and bounded, and thus, by the structure theorem \cite[Corollary~1.20]{kashiwara-schapira}, it decomposes as a direct sum of interval derived sheaves.
This decomposition can be represented as a \emph{graded barcode}, which satisfies a bottleneck stability result \cite[Theorem~5.10]{berkouk-ginot}.
    \item
Define the subposet $\Mbb \subseteq \Rbb \times \Rbb^\op$ as the convex hull of the lines $\ell_0, \ell_1 \subseteq \Rbb \times \Rbb^\op$ with slope $-1$ and intersecting the $x$-axis on $-\pi$ and $\pi$, respectively.
The \emph{relative interlevel set homology} of $f$ is a representation $\RISH(f) \in \Veck^\Mbb$ which encodes the
interlevel set and relative homology of $f$ in all homological degrees, as well as the structure morphisms induced by inclusions and by the connecting morphism
in the Mayer--Vietoris long exact sequence
\cite{bauer2022relativeinterlevelsetcohomology,fluhr-dissertation,bauer-botnan-fluhr}.
Under certain tameness assumptions on $f$, the structure theorem \cite[Theorem~C.5]{bauer-botnan-fluhr} implies that $\RISH(f)$ decomposes as a direct sum of indicator representations of maximal rectangles in $\Mbb$.
The multiset thus obtained serves as a barcode and comes with a notion of bottleneck distance.
This approach connects level set persistent homology to multiparameter persistence;
see \cite{botnan-lesnick} for an earlier approach.
\end{itemize}

Although, at a first glance, extended persistent homology may not seem to encode much more information than sublevel set persistent homology,
it encodes the homology of all level sets $f^{-1}(a)$ of Morse-like functions thanks to the Mayer--Vietoris long exact sequence
\cite{carlsson-de-silva-morozov}.
This is proven using
representations of a zig-zag poset
$\bullet \rightarrow \bullet \leftarrow \bullet \rightarrow \cdots \leftarrow \bullet$,
which is equivalently a quiver of type A.
In fact:

\medskip
\noindent\emph{For a Morse-like function $f : X \to \Rbb$,
the isomorphism types of $\,\EPH_\bullet(f) \in \Veck^\Lbb$, $f_*(\kbb_X) \in \mathrm{D}(\mathrm{Sh}(\Rbb))$, and $\RISH(f) \in \Veck^\Mbb$
can be recovered from one another.
Under this equivalence, the three bottleneck distances coincide, and are universal.}
\medskip

Considering the derived version of extended persistence, the statement can be strengthened to a natural isomorphism of functors that preserves interleavings;
see \cite{bauer-botnan-fluhr,fluhr-dissertation}, which builds on \cite{carlsson-de-silva-morozov,bendich2013interlevel}.
This equivalence gives three perspectives on the same phenomenon:
extended persistence is well suited for computations,
the derived level set persistence sheaf is a conceptually clean description,
and relative interlevel set homology is explicit from the structural point of view.
The derived equivalence between extended and level set persistence can be viewed as a continuously-indexed version of Happel's derived equivalence for the case of type $A$ quivers with different orientations~\cite{happel1988}; see \cite[Section~2]{bauer2022relativeinterlevelsetcohomology}.
This also connects to the representation theory of continuous quivers; see \cref{section:challenges-infinite-continuous}.

\section{Multiparameter Persistence}
\label{section:multiparameter-persistence}

\begin{figure}
    \centering
    \includegraphics[width=\linewidth]{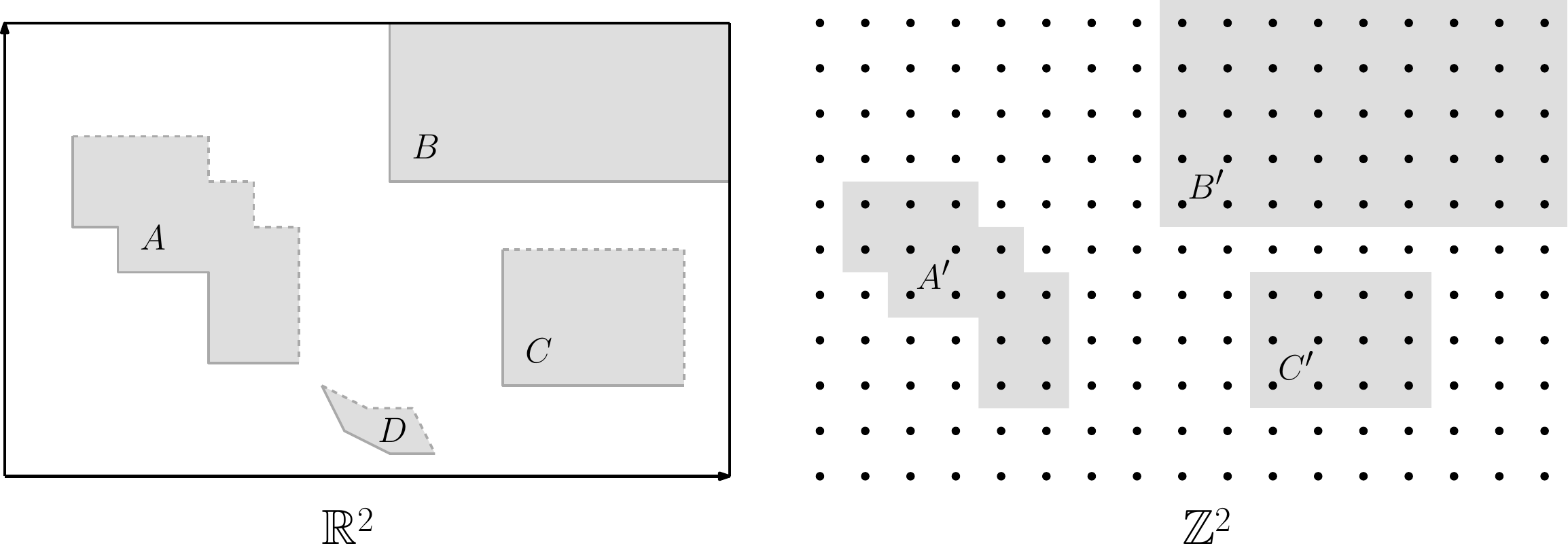}
    \caption{%
    \emph{Left.} Four subsets of the poset $\Rbb^2$ which are spreads.
    The spread representations $\kbb_A$, $\kbb_B$, and $\kbb_C$ are finitely presented, and in fact $\kbb_C$ is a rectangle representation, and $\kbb_B$ is an indecomposable projective since $B$ is the upset of a single point (its bottom left corner).
    The spread representation $\kbb_D$ is not finitely presentable since ``it is born along diagonals'' (see \cref{section:challenges-infinite-continuous}).
    \emph{Right.} The restriction of the spreads $A$, $B$, and $C$ to the subposet $\Zbb^2 \subseteq \Rbb^2$, which also happen to be spreads.
    }
    \label{figure:spread}
\end{figure}

We start by describing how the fundamental results of one-parameter persistence
discussed in \cref{section:fundamentals-one-parameter-persistence} break down in the multiparameter case.
We then go on to describe the main approaches for overcoming these challenges.
 
\subsection{The Main Challenges}
\label{section:main-challenges}

While it is true that every \pfd{} representation of any poset (and even any category) decomposes uniquely as a direct sum of indecomposable representations \cite{Azumaya1950,CrawleyBoevey1994,botnan-crawley},
the category of representations of non-linearly ordered sets such as $\R^n$ admits no simple characterization of the indecomposable objects, as it is expected from the classification of quivers according to their representation type:

\medskip
\noindent
\emph{%
The category of finite dimensional representations of a product of two or more total orders, each one with at least $4$ elements, is of wild type.}
\medskip

Informally, this means that classifying such representations is as hard as classifying the indecomposable representations of every possible finite dimensional algebra; see, e.g., \cite[Section~1]{bauer-scoccola} for a formal statement.
This issue cannot be fixed by restricting attention to, e.g., finitely presented representations (abbreviated \textbf{\fp{}}).
So, in multiparameter persistence, there is no useful analogue of the Structure Theorem.

The Isometry Theorem also fails dramatically \cite[Theorems~A~and~B]{bauer-scoccola}:

\medskip
\noindent
\emph{%
When $n \geq 2$, 
the space given by finitely presented representations of $\R^n$ endowed with the interleaving distance
has the property that the 
indecomposable representations form a dense (and even generic\footnote{Although this may sound similar to standard genericity statements in representation theory~\cite[Lemma~2.3]{reineke}, the relevant moduli space is constructed in a completely different way.}) subspace.}
\medskip

In particular, two multiparameter persistence modules that are very close in the interleaving distance can have radically different indecomposable decompositions, an observation already made in \cite{botnan-lesnick}; see also \cref{figure:stability-signed-invariants}.
So, although the bottleneck distance $d_B$ admits a simple generalization to multiparameter persistence as an infimum of the cost of all matchings between indecomposables (see, e.g., \cite[Section~1]{bauer-scoccola}), there is no stability result of the form $d_B \leq c \cdot d_I$ that applies to general multiparameter persistence modules.

There are two main ways in which multiparameter persistence modules are studied in the literature:
\begin{enumerate}
\item By restricting attention to suitable non-wild subcategories (often containing only spread representations, see below).
\item By considering additive invariants or suitable simplifications of the representations.
\end{enumerate}

\subsection{Spread Representations}

A \define{spread} of a poset $\Pcal$ is a subset $S \subseteq \Pcal$ that is \define{poset-connected} (i.e., non-empty and such that any two elements in $S$ are connected by a zigzag of comparable elements in $S$) and \define{poset-convex} (i.e., for any pair of elements in $S$, the closed segment between them is also in $S$).
A \define{spread representation} is any representation that is isomorphic to the indicator representation of a spread $S$, that is, the representation $\kbb_S : \Pcal \to \Veck$ that takes the value $\kbb$ on $S$ and zero elsewhere, with all morphisms that are not forced to be zero being the identity of $\kbb$.
For example, the interval representations in the Structure Theorem are spread representations;
see \cref{figure:spread} for multiparameter examples.

The notions of spread and of spread representation were introduced in persistence as generalizations---from one parameter to multiple parameters---of the concepts of interval and of interval representation.
Many references use the term \emph{interval} to refer to what we call spread;
we prefer to avoid this since this usage of the term interval does not agree with the notion of interval of a poset from order theory and combinatorics.

A relevant example of a subcategory of $\Veck^\Pcal$ whose objects are spread-decomposable is that of projective representations 
\cite[Chapter~9,~Corollary~7.3]{mitchell-2};
\cite[Corollary~9.2]{mitchell1978module};
\cite[Proposition~5]{hoeppner-lenzing}:

\medskip
\noindent\emph{%
Projective representations are free\footnote{Recall that, in a functor category $\mathrm{Set}^C$, a functor is \emph{free} if it is a coproduct of representables.
Note that the indicator representations of principal upsets are precisely the representables in $\Veck^\Pcal$, in the ($\kbb$-vector space) enriched sense.}.
That is,
every projective representation decomposes uniquely as a direct sum of indecomposables, each of which is isomorphic to the indicator representation $\kbb_{\{q \in \Pcal : q \geq p\}}$ of the principal upset of an element $p \in \Pcal$.
}
\medskip

Another interesting example is the category of rectangle-decomposable multiparameter persistence modules.
These are representations of products of linear orders that decompose as direct sums of indicator representations of rectangles; see \cite{botnan-et-al} for definitions and \cref{figure:spread} for an illustration.
Rectangle-decomposable representations are a generalization of block-decomposable representations, which show up naturally when studying level set persistence \cite{botnan-lesnick,bauer-botnan-fluhr} (see also \cref{section:levelset-persistence}).
There is interesting work characterizing rectangle- and block-decomposable representations using restrictions to certain subposets of finite representation type \cite{botnan-lebovici-oudot-2,cochoy,botnan-lebovici-oudot}.

\subsection{Additive Invariants}
\label{section:invariants}

Since the indecomposable representations of $\R^n$ and other posets cannot be effectively classified, much attention has been devoted to the study of additive invariants.
An \define{additive invariant} of an additive category $\Acal$ is a function $\alpha : \mathrm{Ob}(\Acal) \to G$ mapping the objects of $\Acal$ to an Abelian group $G$, which is isomorphism invariant (i.e., $\alpha(X) = \alpha(Y)$ whenever $X \cong Y \in \Acal$) and additive (i.e., $\alpha(X\oplus Y) = \alpha(X) + \alpha(Y)$ for all $X,Y \in \Acal$).

Equivalently, an additive invariant is a morphism 
of Abelian groups $\alpha : \Ksf^{\spl}(\Acal) \to G$,
with domain the \define{split Grothendieck group}, i.e., the quotient of the free Abelian group on $\mathrm{Ob}(\Acal)$ by relations $[A] - [B] + [C] = 0$ whenever there exists a split short exact sequence $0 \to A \to B \to C \to 0 \in \Acal$.
When every object of $\Acal$ decomposes as a finite direct sum of indecomposables,
these indecomposables form a basis of the group $\Ksf^{\spl}(\Acal)$,
and additivity just means that the invariant is determined by its value on indecomposables.

Given two such invariants $\alpha$ and $\beta$, one says that $\alpha$ is \define{finer} than $\beta$ if $\alpha(X) = \alpha(Y)$ implies $\beta(X) = \beta(Y)$ for all $X,Y \in \Acal$.
Such invariants are \define{equivalent} if each one is finer than the other one, and an
invariant $\alpha$ is \define{complete} if it is equivalent to isomorphism, i.e., $\alpha(X) = \alpha(Y)$ if and only if $X \cong Y$.

In \cref{figure:invariants}, we illustrate some of the main invariants that have been considered in the literature, and that we overview here.

\begin{figure}[p]
\savebox{\captionlistbox}{\parbox[t]{\dimexpr\linewidth-2\captionindent\relax}{\small%
    \hangindent=1.8em\hangafter=1\makebox[1.6em][l]{\emph{(a)}}A function $f : S^2 \to \Rbb^2$ from the two-sphere into the plane.
    \par\hangindent=1.8em\hangafter=1\makebox[1.6em][l]{\emph{(b)}}The one-dimensional sublevel set persistent homology $H_1(f) \in \Veck^{\Rbb^2}$, which happens to be indecomposable.
    \par\hangindent=1.8em\hangafter=1\makebox[1.6em][l]{\emph{(c)}}A representation $M \in \Veck^{\Zbb^2}$ obtained as the restriction of $H_1(f)$ to the poset $\Zbb^2$.
    We restrict to $\Zbb^2$ since the invariants depicted have not yet been developed for continuous representations such as $H_1(f) \in \Veck^{\Rbb^2}$.
    \par\hangindent=1.8em\hangafter=1\makebox[1.6em][l]{\emph{(d)}}The dimension vector of $M$.
    \par\hangindent=1.8em\hangafter=1\makebox[1.6em][l]{\emph{(e)}}The standard Betti tables of $M$, representing indecomposable projectives in degree zero (circles), one (crosses), and two (squares).
    \par\hangindent=1.8em\hangafter=1\makebox[1.6em][l]{\emph{(f)}}The end-curves of $M$ \cite{brustle2025counts}.
    \par\hangindent=1.8em\hangafter=1\makebox[1.6em][l]{\emph{(g)}}The signed barcode of $M$ \cite{BotnanOppermannOudot2023}.
    \par\hangindent=1.8em\hangafter=1\makebox[1.6em][l]{\emph{(h)}}The generalized persistence diagram of $M$ \cite{kim-memoli}.
    \par\hangindent=1.8em\hangafter=1\makebox[1.6em][l]{\emph{(i)}}The spread-Euler characteristic of $M$ \cite{escolar2024barcoding}.}}
    \centering
    \includegraphics[width=\linewidth, height=0.85\textheight, keepaspectratio]{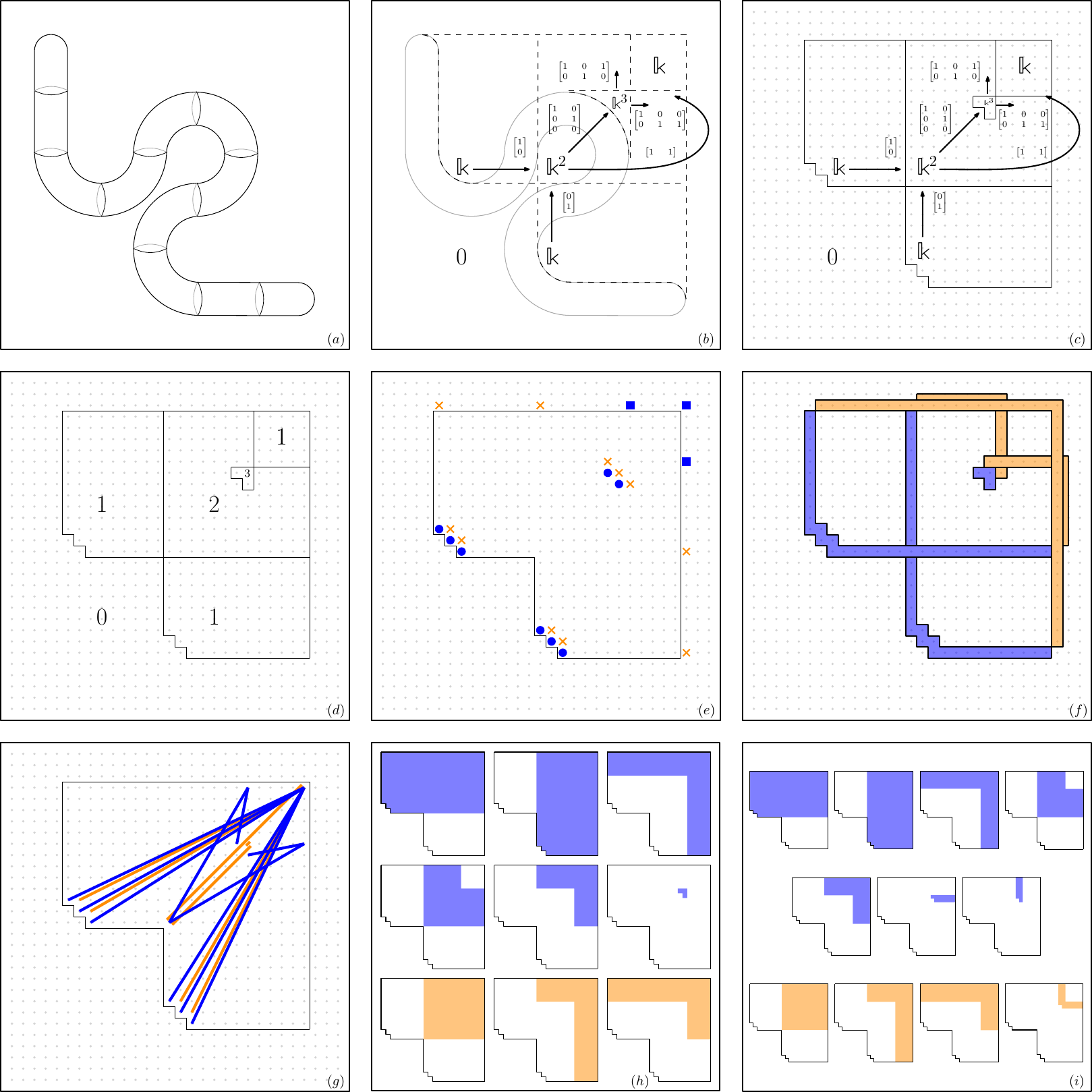}
    \caption[Six invariants of poset representations]{%
    Six invariants (panels $(d)$ to $(i)$) of a poset representation (panel $(c)$), obtained as the persistent homology of a function (panel $(a)$), restricted to $\Zbb^2$ (panel $(c)$).
    The invariants are ordered by their space complexity (see \cref{section:computational-aspects}).
    \smallskip
    \newline\usebox{\captionlistbox}}
    \label{figure:invariants}
\end{figure}

The additive invariants described in this section usually require some assumptions on the poset $\Pcal$ (e.g., finite) or on the representations under consideration (e.g., finitely presented).
For brevity and clarity, we let $\rep(\Pcal) \subseteq \Veck^\Pcal$ be a suitable category of poset representations, and for details about assumptions we refer the reader to the papers being cited.

\paragraph{Numerical invariants}
The \emph{pointwise dimension invariant} (also known as \emph{dimension vector} or \emph{Hilbert function}) is the additive invariant which simply records the pointwise dimension of each representation.
Formally, it is the function $\dim : \rep(\Pcal) \to \Zbb^\Pcal$ mapping a representation $M$ to the function $\dim(M)(p) = \dim_\kbb M(p)$.

In order to record information about the persistence of the elements of a representation and not just pointwise information, the seminal paper \cite{calrsson-zomorodian} introduces the \emph{rank invariant}, which consists of the rank of all the structure morphisms of each representation.
Formally, the rank invariant is the function $\rk : \rep(\Pcal) \to \Zbb^{\{(a,b) : a \leq b \in \Pcal\}}$ mapping a representation $M$ to the function $\rk(M)_{(a,b)} = \mathrm{rank}(M(a) \to M(b))$.
They prove that the rank invariant is complete in one-parameter persistence, and observe that this is not true in multiparameter persistence.

The rank invariant can be refined via the so-called \emph{generalized rank invariant}
$\rep(\Pcal) \to \Zbb^{\mathrm{spreads}(\Pcal)}$, which records the rank of the limit-to-colimit map of the restriction of the representation to each spread of the poset \cite{kim-memoli}\footnote{The original definition differs slightly from the most commonly used definition \cite{clause2022generalized}.}.
It is known that the generalized rank invariant of a representation $M$ on a spread $S$ coincides with the multiplicity of the indicator representation $\kbb_S$ in the indecomposable decomposition of the restriction of $M$ to $S$
\cite{chambers,brustle2025generalized,dey2026limit}.
This result and a notion essentially equivalent to the generalized rank invariant were considered earlier in the context of quiver representations in \cite{kinser,kinser-2}.

Another way to obtain information of a representation $M$ other than the pointwise dimension is to fix a set of representations $\Ycal$
and to consider the \emph{$\Ycal$-dimhom invariant}
\cite{BlanchetteBrustleHanson2024},
defined by taking the dimension of the hom-space from each $Y \in \Ycal$ to $M$.
Formally, one defines
$\dimhom^\Ycal : \rep(\Pcal) \to \Zbb^\Ycal$,
by $\dimhom^\Ycal(M)(Y) \coloneqq \dim_\kbb \Hom(Y, M)$.
The pointwise dimension invariant
is recovered by taking $\Ycal = \{\kbb_{\{q \in \Pcal : p \leq q\}}\}_{p \in \Pcal}$ to be the set of indecomposable projectives induced by principal upsets.

\paragraph{Invariants from homological algebra}
The homological algebra of poset representations has a long history \cite{mitchell-2,mitchell-3,baclawski,changchang,gerstenhaber-schack,igusa-zacharia}, and can be used to produce additive invariants as well.
Let $i \in \Nbb$.
The $i$th \emph{Betti table} is the additive invariant
$\beta_i : \rep(\Pcal) \to \Zbb^\Pcal$
defined by 
\[
    \beta_i(M)(p) \;\coloneqq\; \dim_\kbb\; \Ext^i_{\rep(\Pcal)}\big(\,M\,,\, \kbb_p\,\big)\;,\;\text{for $p \in \Pcal$}\,,
\]
where $\kbb_p \in \Veck^\Pcal$ is the simple representation with support $p$.
Equivalently (when $M$ admits a finite projective resolution),
the number $\beta_i(M)(p) \in \Zbb$ is the multiplicity of the indecomposable projective $\kbb_{\{q \in \Pcal : p \leq q\}}$ in the $i$th homological degree of any minimal projective resolution of $M$; see, e.g., \cite[Lemma~5.20]{botnan-et-al}.
See \cref{figure:invariants}~\emph{(e)} for an example.

The \emph{Euler characteristic} invariant, denoted $\chi$, is defined as the alternating sum of Betti tables $\sum_{i \in \Nbb} (-1)^i \beta_i : \rep(\Pcal) \to \Zbb^\Pcal$, so that
\[
    \chi(M) \; = \; \sum_{i \text{ even}} \beta_i(M) - \sum_{i \text{ odd}} \beta_i(M) \; = \; \beta_+(M) - \beta_-(M)\,.
\]
The sums of even and odd Betti tables $\beta_+ : \rep(\Pcal) \to \Zbb^\Pcal$ and $\beta_-: \rep(\Pcal) \to \Zbb^\Pcal$ will show up again when considering the bottleneck stability of these constructions in \cref{section:multiparameter-bottleneck}.

Betti tables are standard in
commutative algebra \cite{peeva,miller-strumfels}
and
persistence \cite{knudson,calrsson-zomorodian,lesnick-wright,oudot-scoccola}.
The following construction generalizes the above description of Betti tables from poset representations to representations of any finite dimensional algebra $\Lambda$, and may be familiar to readers acquainted with the representation theory of finite dimensional algebras \cite{auslander-reiten-smalo}:
Given a finite dimensional $\Lambda$-module $M$, one can define, for each $i \in \Nbb$,
\begin{equation}
    \label{definition:graded-homology}
    H^i(M)\, \coloneqq\, \Ext_\Lambda^i\big(\,M\,,\, \Lambda / \mathrm{rad}\, \Lambda\,\big) \,\in\, \mod_{\Lambda / \mathrm{rad}\,\Lambda},
\end{equation}
where $\mathrm{rad}\, \Lambda$ denotes the Jacobson radical of $\Lambda$.
The algebra $\Lambda / \mathrm{rad}\,\Lambda$ is semisimple,
so the module $H^i(M)$ is determined, to isomorphism, by the dimensions $\dim_\kbb \Ext_\Lambda^i(M, S)$ where $S$ ranges over the simple $\Lambda$-modules.
These dimensions are a natural extension of the notion of Betti table to this setting.
The construction in Definition~\ref{definition:graded-homology} is also standard in the context of graded algebras and Koszul duality \cite[Remark~2~after~Theorem~1.2.6]{beilinson-et-al}.

\paragraph{Invariants from relative homological algebra}
Many of the previous invariants can be generalized via homological algebra relative to an exact structure, in the sense of \cite{draxler-et-al,auslander-solberg}\footnote{Relative homological algebra should not be confused with the notion of relative homology from algebraic topology, used in \cref{section:levelset-persistence}.}.
Briefly, one can change the standard notion of exactness from homological algebra by choosing a collection of sequences $\Ecal$ declared to be the short exact sequences.
From this, one obtains new notions of $\Ecal$-projectivity, $\Ecal$-resolution,
and $\Ecal$-Grothendieck group.
A well-behaved exact structure $\Ecal$ provides a rich family of additive invariants:
\begin{itemize}
    \item \emph{$\Ecal$-Betti tables}: for an object $M$, these are the sequence of functions $\beta_i^\Ecal(M)$ ($i \in \Nbb$) recording the multiplicities of the indecomposable $\Ecal$-projectives appearing in the minimal $\Ecal$-projective resolution of $M$, assuming one exists\footnote{%
    Alternatively,
    in the case of modules over a finite dimensional algebra $\Lambda$, and $\Ecal$ obtained as the short sequences which are exact after mapping from each module in a finite collection $\Ycal$, 
    one can define relative Betti tables as the dimension of Definition~\ref{definition:graded-homology}, after replacing $\Lambda$ with the endomorphism algebra of $\bigoplus_{Y \in \Ycal} Y$, and $M$ by $\hom(\bigoplus_{Y \in \Ycal} Y, M)$.
    This process of reducing relative homological algebra to standard homological algebra is known as \emph{projectivization} \cite[Chapter~II]{auslander-reiten-smalo}.
    }.
    \item \emph{$\Ecal$-Euler characteristic}: the alternating sum $\chi^\Ecal = \sum (-1)^i \beta^\Ecal_i $ of $\Ecal$-Betti tables.
    \item \emph{$\Ecal$-Grothendieck group invariant}: the quotient map into the $\Ecal$-Grothendieck group.
\end{itemize}
The exact structures of interest in persistence include the following:
\begin{itemize}
    \item The \emph{standard exact structure}, where $\Ecal$ consists of the usual short exact sequences.
    \item The \emph{rank exact structure} \cite{BotnanOppermannOudot2023}, denoted $\rkit$, where the $\Ecal$ consists of those short exact sequences $0 \to A \to B \to C \to 0$ that are additive with respect to the rank invariant, i.e., such that $\rk(B) = \rk(A) + \rk(C)$.
    \item The \emph{spread exact structure} \cite{AsashibaEscolarNakashimaYoshiwaki2023}, denoted $\sprd$, where $\Ecal$ consists of those short exact sequences which remain exact after mapping from each spread representation.
\end{itemize}

\paragraph{Results on (relative) homological algebra of poset representations}
Let us start with some classical examples:
In the case of the standard exact structure, Hilbert's syzygy theorem \cite{hilbert} for multigraded polynomial rings can be reinterpreted in the language of poset representations as follows:
\[
    \mathrm{gl.dim}\left(\Veck_\fp^{\Zbb^n}\right) = 
    \mathrm{gl.dim}\left(\Veck_\fp^{\Rbb^n}\right) = n.
\]
Interestingly, the global dimension of the full category of representations of $\Rbb^n$ is shown to be $n+1$ in \cite{geist-miller}.
In \cite{igusa-zacharia}, it is shown that the Betti tables of simple representations in $\veck^\Pcal$ can be computed as the simplicial cohomology of the order complex of certain posets derived from $\Pcal$; in particular it is shown that there exist posets for which the global dimension of $\veck^\Pcal$ depends on the characteristic of the field~$\kbb$.
Examples coming from the persistence literature include the following.
The global dimension of the rank exact structure for finitely presented multiparameter persistence modules is computed in \cite{botnan-et-al}:
\[
 \mathrm{gl.dim}^{\rkit}\left(\Veck_\fp^{\Zbb^n}\right) = \mathrm{gl.dim}^{\rkit}\left(\Veck^{\R^n}_{\fp{}}\right) = 2n - 2\,.
\]
In \cite{aoki-et-al} it is proven that the global dimension of the spread exact structure is monotonic with respect to inclusions of posets (a property that is not satisfied by the standard global dimension):
\begin{center}
\emph{%
For every (full) subposet $\Qcal \subseteq \Pcal$ of a finite poset $\Pcal$, 
    $\displaystyle \mathrm{gl.dim}^\sprd(\Qcal) \leq \mathrm{gl.dim}^\sprd(\Pcal)$
}
\end{center}
and \cite{blanchette2025stabilization} gives a positive answer to a conjecture from \cite{AsashibaEscolarNakashimaYoshiwaki2023}, by proving the existence of a uniform bound on the global dimension of the spread exact structure on certain families of posets:
\begin{center}
\emph{%
For every finite poset $\Pcal$, we have
$\displaystyle \sup_{\substack{\Lcal \text{ finite}\\\text{total order}}} \mathrm{gl.dim}^\sprd\big(\Lcal \times \Pcal\big) < \infty$.
}
\end{center}
The result is based on \cite{blanchette-brustle-hanson-survey}, which characterizes the irreducible morphisms of the spread exact structure, and gives methods for proving existence of certain exact structures on the category of representations of infinite posets; see also \cite{aoki-tada}.

\paragraph{Bases for additive invariants}
Since two additive invariants can be equivalent without being equal, there may be more or less convenient representatives for a given equivalence class of invariants.
For example, for \fp{} representations of $\Rbb$, the rank invariant is equivalent to the barcode, but while the barcode consists of a finite multiset of intervals, the rank invariant records an uncountable number of ranks.
This motivates the search of convenient representations of invariants, and these can be obtained through bases.

A \define{basis} for an additive invariant $\alpha : \Acal \to G$ consists of a family of objects $\Bcal = \{M_i \in \Acal\}_{i \in I}$, such that, for every $M \in \Acal$, there exists a unique family of integers $\{c_i^M \in \Zbb\}_{i \in I}$, with the property that $c_i^M \neq 0$ for finitely many $i \in I$ and $\alpha(M) = \sum_{i \in I} c_i^M \cdot \alpha(M_i)$.

When the additive invariant $\alpha$ is seen as a group morphism $\alpha : \Ksf^{\spl}(\Acal) \to G$, such a basis induces an isomorphism of Abelian groups $\im(\alpha) \cong \Zbb^I$, mapping $\alpha(M)$ to the coefficients $\{c^M_i\}_{i \in I}$.
The additive invariant $\alpha$ expressed in the basis $\Bcal$ gives rise to a new additive invariant $\alpha^\Bcal(M) \coloneqq \sum_{i \in I} c^M_i \cdot \delta_i \in \Zbb^I$, which is equivalent to $\alpha$ but provides an alternative, sometimes more convenient, representation.

\paragraph{Bases from M\"obius inversion}
In one-parameter persistence, the set of (isomorphism classes of) spread representations forms a basis for the rank invariant, and the rank invariant represented in this basis is equal to the barcode, and thus complete \cite{calrsson-zomorodian}.
Interestingly, the change of basis is given by M\"obius inversion, in the following sense.
Let $\Pcal$ be a finite total order, and let $\Qcal = \{(a,b) : a\leq b \in \Pcal\}$ be the set of intervals of $\Pcal$, ordered by inclusion.
For every $a \leq b \in \Pcal$, the multiplicity of the interval module $\kbb_{[a,b]}$ in the indecomposable decomposition of $M$ is equal to $\widehat{\rk(M)}(a,b)$, where $\rk(M) : \Qcal \to \Zbb$ is the rank invariant of $M$, and $\widehat{-}$ denotes M\"obius inversion \cite{rota}.
This was observed in \cite{patel-mobius}.

An analogous observation is used in \cite{kim-memoli} to show that the set of spreads forms a basis for the generalized rank invariant over arbitrary finite posets.
The alternative representation of the generalized rank thus obtained is known as the \emph{generalized persistence diagram} (\cref{figure:invariants} \emph{(h)}).
Similarly, over an arbitrary poset, the standard rank invariant admits a basis given by the indicator representations of segments \cite[Corollary~2.14]{BotnanOppermannOudot2023}, and the alternative representation of the rank invariant thus obtained is known as the \emph{signed barcode} (\cref{figure:invariants} \emph{(g)}).

\paragraph{Bases from relative homological algebra}
M\"obius inversion has the following algebraic interpretation \cite[Lemma~7.3.5]{ladkani2008homological}:

\medskip
\noindent\emph{%
The zeta function of a finite poset $\Pcal$ equals the Cartan matrix of the incidence algebra of $\Pcal$.
Hence, the M\"obius function of $\Pcal$ is the inverse of the Cartan matrix of the incidence algebra of $\Pcal$.
}
\medskip

With this in mind, one can generalize the above story as follows.
Let $\Ycal$ be a finite set of pairwise non-isomorphic representations of a finite dimensional algebra $\Lambda$,
and denote also by $\Ycal$ the exact structure given by the sequences $0 \to A \to B \to C \to 0$ of representations of $\Lambda$ that are exact after taking $\hom$ from every object of $\Ycal$.
In this case, the set $\Ycal$ provides a basis for the $\Ycal$-dimhom invariant as well as for the relative Grothendieck group invariant,
and the (backwards) change of basis is given by the $\Ycal$-Cartan matrix $\Ccal^\Ycal$ (which records the dimension of the hom-space between elements of $\Ycal$):
\[
    \dimhom^\Ycal(M) = \Ccal^\Ycal \; \cdot \; \chi^\Ycal(M).
\]
See \cite{BlanchetteBrustleHanson2024} and \cite[Proposition~B.4]{brustle2025counts}.
The classical case is recovered by taking $\Lambda$ the incidence algebra of a finite poset $\Pcal$ and $\Ycal = \{\kbb_{\{q \in \Pcal : p \leq q\}}\}_{p \in \Pcal}$ the indecomposable projectives induced by principal upsets.

\paragraph{Relationships between invariants}
Besides easy observations (such as the fact that both the rank invariant and the standard Betti tables determine the dimension vector),
there are few results about the relationships between the different invariants in the literature.
Interesting results include: the fact that, in two-parameter persistence,
both the generalized persistence diagram
\cite{kim-moore} 
and the end-curves
\cite[Theorem~D]{brustle2025counts}
determine the standard Betti tables,
and \cref{equation:two-parameter-count}, below, which relates several of the invariants in the literature, also in the two-parameter case.
We overview end-curves in \cref{section:multiparameter-bottleneck}; see also \cref{figure:invariants}~\emph{(f)}.
For other known relationships, see
\cite{clause2022generalized,escolar2024barcoding},
where it is shown, for example, that the generalized persistence diagram and the spread-Euler characteristic are distinct and incomparable as invariants, in the sense that, in general, each one does not determine the other one \cite[Theorem~4.1$(ii)$]{escolar2024barcoding}.
For results on the time and space complexity of the computation of these invariants, see \cref{section:computational-aspects}.

A homological method for producing additive invariants of poset representations called \emph{M\"obius homology} was introduced in \cite{patel2026mobius}.
It was later observed that this homology theory is equivalent to standard Betti tables, or alternatively, to the definition in \cref{definition:graded-homology} for $\Lambda$ the incidence algebra \cite[Remark~4.2]{asashiba2026minimal};
see \cref{section:mobius-homology-equals-relative} for detailed statements and proofs.

\subsection{Bottleneck Stability, Negative Multiplicities, and Open Questions}
\label{section:multiparameter-bottleneck}

The most interesting results in persistence are often those that link algebra and geometry; the canonical example being algebraic/bottleneck stability, which links interleavings and matchings.
There are several newer results of this form, which we now describe.

The remarkable \cite[Theorems~4.3~and~4.12]{bjerkevik} say that stability of decompositions (in the sense of algebraic/bottleneck stability) does hold for suitable subcategories of multiparameter persistence modules:

\medskip
\noindent
\emph{%
For $M,N \in \veck^{\R^n}$
projective and $c_n = \max(n-1,1)$,
or $M$ and $N$ rectangle-decomposable and $c_n = 2n-1$, we have
$d_B(M,N) \leq c_n \cdot d_I(M,N)$.
}
\medskip

The constants $c_n$ are not known to be tight in general, and in fact \cite[Conjecture~6.5]{bjerkevik-2}, if true, would imply that $c_n$ can be made independent of $n$ in the case of projective representations.
This conjecture is equivalent to \cite[Conjecture~6.10]{bjerkevik-2}, which is a simple statement involving graphs and matrices, and which is the combinatorial essence of one of the main open questions in persistence: Bjerkevik's conjecture, below.

\newpage

Building on these results and global dimension bounds,
the following bottleneck stability result for Betti tables is proven in
\cite[Theorem~1.1]{oudot-scoccola} and \cite[Theorem~6.1]{botnan-et-al}:

\medskip
\noindent
\emph{%
For arbitrary $M,N \in \Veck_\fp^{\R^n}$,
and either $c_n = \max(n^2 - 1,2)$ and $\beta$ standard Betti tables,
or $c_n = (2n-1)^2$ and $\beta$ the $\rk$-Betti tables, we have
\begin{equation}
    \label{equation:bottleneck-stability-signed}
    d_B\Big(\;\beta_+(M) + \beta_-(N)\;,\; \beta_+(N) + \beta_-(M)\;\Big)\; \leq \; c_n \cdot d_I(M,N).
\end{equation}
}

\noindent
See \cref{figure:stability-signed-invariants} for an illustration of these stability results.
This type of stability result has a nice interpretation in terms of signed optimal transport, see \cite{divol-lacombe,bubenik-elchesen}, \cite[Section~6.4]{oudot-scoccola}, and \cite[Appendix~A.1]{pmlr-v235-scoccola24a}.

\begin{figure}
    \centering
    \includegraphics[width=\linewidth]{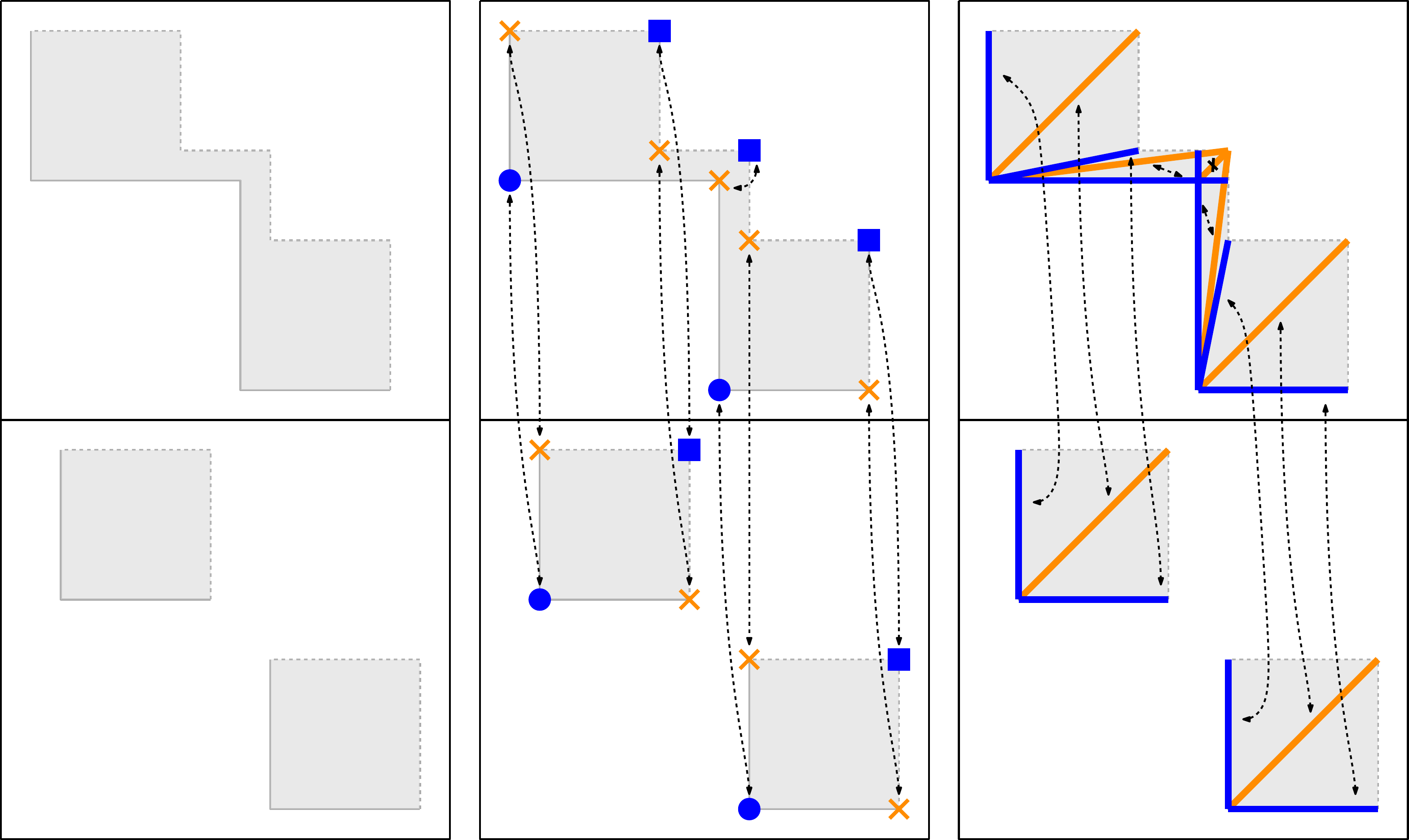}
    \caption{%
    \emph{Left.} Two spread-decomposable representations of $\Rbb^2$ that are close in the interleaving distance, but whose indecomposables admit no low-cost matching \cite{botnan-lesnick}.
    \emph{Center.} The signed matching guaranteed to exist by a main result in \cite{oudot-scoccola}.
    \emph{Right.} The signed matching guaranteed to exist by a main result in \cite{botnan-et-al}.}
    \label{figure:stability-signed-invariants}
\end{figure}

When expressing an additive invariant $\alpha(M)$ in a certain basis, the coefficients $\{c_i^M \in \Zbb\}$ are not always positive.
For example, this is usually the case for $\alpha$ the signed barcode or the generalized persistence diagram, when the poset is not a linear order.
The presence of negative multiplicities makes interpretation harder, but also has implications regarding bottleneck stability.
For example, the left-hand side of \cref{equation:bottleneck-stability-signed} does not satisfy the triangle inequality with respect to $M$ and $N$, and forcing it to do so makes the distance thus obtained only take the value $0$ unless $d_I(M,N) = \infty$ \cite[Proposition~1.4]{oudot-scoccola}; this makes the left-hand side of \cref{equation:bottleneck-stability-signed} often be a very weak lower bound.
This is a known issue concerning bottleneck/optimal transport-type distances in the presence of negatives \cite{mainini,mccleary-patel}.

In fact, there is a fundamental incompatibility between the interleaving distance and additive invariants.
Informally, it can be summarized as follows:
\begin{center}
\emph{All additive invariants of persistence modules are metrically trivial, in the uniformly continuous case.}
\end{center}
More formally, we have the following:

\medskip\noindent\textbf{No-Go Stability for Additive Invariants.}\;
\emph{Let $\Acal \subseteq \Veck^{\R^n}$ be any subcategory that is closed under shifts, finite sums, and pullbacks.
Let $G$ be a commutative monoid with a symmetric function $d_G : G \times G \to [0,\infty]$ with $d_G(g,g) = 0$ for all $g \in G$, and such that addition $G \times G \to G$ is uniformly continuous.
Let $\alpha : \Acal \to G$ be additive and uniformly continuous with respect to $d_I$ and $d_G$.
Assume:
\begin{enumerate}
    \item The function $d_G$ satisfies the triangle inequality;
    \item There exists a function $- : G \to G$ such that $d_G(c + (-c), 0) = 0$ for all $c \in G$.
\end{enumerate}
If $M, N \in \Acal$ are such that $d_I(M,N) < \infty$, then $d_G(\alpha(M), \alpha(N)) = 0$.}
\medskip

This result is a strong variant of \cite[Theorem~4.15]{berkouk-instability};
it is proven in the appendix as \cref{corollary:no-go}.
How to overcome this issue is an important open question in multiparameter persistence.
In the rest of this section, we outline some approaches.

First, it is instructive to understand why the no-go result above does not contradict some of the non-trivial stability results that we have seen so far.
The stability theorem of one-parameter persistence is non-trivial; it does not contradict the no-go result because $(2)$ is not satisfied in the case of one-parameter barcodes: barcodes only contain bars with positive multiplicities.
Similarly, the bottleneck stability results for multiparameter persistence in \cref{equation:bottleneck-stability-signed} hold in a setup where $(2)$ is satisfied, but where $(1)$ is not.

We conclude this section by describing several approaches to stability results in multiparameter persistence;
many of these deal with ``positive'' invariants, that is, additive invariants such that~$(2)$ is not satisfied.

\paragraph{Fibered barcode, erosion stability, and other metric approaches}
The \emph{fibered barcode} \cite{lesnick-wright-interactive} of a representation $M \in \veck^{\R^n}$ (introduced previously as the \emph{foliation method} \cite{biasotti-size-functions,cagliari-difabio-ferri})
consists of the barcodes of the restriction of $M$ to all monotonic lines in $\R^n$.
Restriction to lines of positive slope in every axis preserves interleavings, which leads to the stability result in \cite{cerri-betti-stable,landi-rank-stability}.
This stability result is not known to admit a compact combinatorial description in the form of a single matching, although certain coherent choices of matching exist in generic situations \cite{MR3533890,cerri-ethier-frosini}.
Other positive invariants include the rank invariant, the generalized rank invariant, and others such as \cite{Fersztand-et-al}, which is based on Harder--Narasimhan filtrations.
These invariants admit a form of stability sometimes known as \emph{erosion stability} \cite{puuska,kim-memoli,patel-mobius,fersztand}.
Erosion stability has no combinatorial representation as an infimum over matchings, and, in fact, it is a weaker form of stability than the usual bottleneck stability in the one-parameter case (in the sense that it provides a weaker lower bound for the interleaving distance); for an example, see \cite[Figure~5.2]{chazal-structure-stability} where the erosion distance is called box distance.
There also exist approaches that directly define other distances on representations using \emph{noise systems} \cite{MR3735858} and \emph{amplitudes} \cite{MR4776401}.

\paragraph{Approximate decompositions}
This approach \cite{bjerkevik-2} aims to circumvent the failure of the bottleneck stability of the indecomposable decomposition in multiparameter persistence (\cref{section:main-challenges}) more directly.
The way it does this is as follows: given interleaved representations $M$ and $N$, before looking for a matching between the indecomposable summands of $M$ and $N$, it first allows for certain controlled simplifications of the representations.
The following is the central construction in the approach.


Let $M \in \veck^{\R^n}$, and recall from \cref{section:fundamentals-one-parameter-persistence} the $\epsilon$-shift $M[\epsilon]$ of $M$ and the canonical morphism $\eta^M_\epsilon \colon M \to M[\epsilon]$.
Fix $\epsilon \geq 0$.
An \define{$\epsilon$-approximate endomorphism} of $M$ is simply a map $f : M \to M[2\epsilon]$.
The idea is to define the \define{$\epsilon$-pruning} as a certain subquotient of $M$ with the property that every $\epsilon$-approximate endomorphism of $M$ descends to an honest endomorphism of the $\epsilon$-pruning; see \cite[Definition~4.15]{bjerkevik-2}.
Interestingly, the $\epsilon$-pruning is an isomorphism invariant, but it is not additive.

The result \cite[Corollary~5.5]{bjerkevik-2} says that the pruning $\mathrm{Pru}_\epsilon(M)$ is a \emph{$2r\epsilon$-refinement} of every representation $\epsilon$-interleaved with $M$, where $r$ is the largest pointwise dimension of $M$,
and where an \define{$\epsilon$-refinement} of a representation $N$, with indecomposable decomposition $N \cong \bigoplus_i N_i$, is a representation isomorphic to a direct sum $\bigoplus_i A_i/B_i$, where $B_i \subseteq A_i \subseteq N_i$ are subrepresentations satisfying $\im\big(\eta^{N_i[-\epsilon]}_\epsilon\big) \subseteq A_i$ and $B_i \subseteq \ker\big(\eta^{N_i}_\epsilon\big)$.
How much the dependence on $r$ can be improved is the content of the following conjecture:

\medskip
\noindent
\textbf{Conjecture (Bjerkevik).}
\emph{Let $\epsilon \geq 0$, let $M, N \in \veck^{\R^n}$ be $\epsilon$-interleaved, and let $r$ be the maximum pointwise dimension among indecomposable summands of $M$.
There is a constant $c$, independent of $M$, $N$, $\epsilon$, and $r$, and a representation $Q$ that is a $cr\epsilon$-refinement of both $M$ and $N$.}
\medskip

\paragraph{Positive counts and end-curves}
The final approach we mention is that of \cite{brustle2025counts}, which hinges upon the theory of string algebras.
Given an additive invariant $\alpha$ together with a basis $\Bcal$, one defines the \emph{total multiplicity} of the signed invariant $\alpha^\Bcal(M)$ evaluated on a representation $M$ as the integer $\Ncal^{\alpha,\Bcal}(M) = \sum c_i^M \in \Nbb$, interpreted as the number of positive bars minus the number of negative bars according to the signed invariant $\alpha^\Bcal$.
It is proven in \cite{brustle2025counts} that, although there is no satisfactory notion of barcode for multiparameter persistence that is always positive, there does exist a canonical notion of \emph{bar-count} $\Ncal^2$ for two-parameter persistence that is analogous to the count of bars in one-parameter persistence, and that,
remarkably, is positive and can be computed as the total multiplicity of standard signed invariants such as the signed barcode, the generalized persistence diagram, and the spread Euler characteristic \cite[Theorem~A]{brustle2025counts}:
\begin{equation}
\label{equation:two-parameter-count}
    \Ncal^2(M) = \Ncal^{\mathrm{gen.pers.diag.}}(M) = \Ncal^{\mathrm{sign.barc.}}(M) = \Ncal^{\mathrm{spread.Eul.char.}}(M)\; \geq\; 0.
\end{equation}
This also has a geometric description: as the number of \emph{end-curves} of the representation $M$, which are analogues of the end-points of bars in one-parameter persistence; see \cref{figure:invariants}~\emph{(f)}.
The existence of suitable bottleneck stability results for the count, end-curves, or related invariants is an open question.

\section{Representation Theory of Finite and Infinite Posets}
\label{section:challenges-infinite-continuous}

The development of persistence gives further motivation for the study of the representation theory of posets.

\paragraph{Representation type}
The classification of finite posets according to their representation type has been carried out in 
\cite{loupias,zavadskij,drozdowski,leszczynski1,leszczynski2}.
This classification is much more involved than that of quivers without relations, but it is still done using the classical trichotomy finite type, tame type, and wild type.

To the best of our knowledge, there is no currently known trichotomy for categories of representations of infinite posets.
In order to prove such a result, the notions of finite, tame, and wild type need to be adjusted, since, for example, the poset $\Rbb$ ought to be of ``finite representation type'': even though it admits infinitely non-isomorphic indecomposable representations, the Structure Theorem implies that the classification of its \pfd{} representations is straightforward.
A possible extension of the notion of finite representation type is given in \cite{bauer-scoccola}, where a (potentially infinite) poset is said to be of \emph{pointwise finite representation type} if there exists a uniform bound on the pointwise dimension of all its \pfd{} indecomposable representations.

The classification of (potentially infinite) posets of pointwise finite representation type is an open question. Partial results in this direction are known, specifically in connection to
\emph{continuous quivers of type $A$} \cite{igusa-rock-todorov}
and \emph{thread quivers} \cite{berg-roosmalen,paquette2024categories}.
The representation theory of continuous quivers of type $A$ relates to relative interlevel set persistence
(\cref{section:levelset-persistence}), although this relationship is still being studied.
The \pfd{} representations of~$S^1$, a continuous extension of cyclic quivers, are classified in \cite{hanson-rock};
it is an open question whether there exists an algebraic/bottleneck stability result for these representations.
For related work on the representation type of locally bounded categories, see \cite{bongartz-gabriel,dung-garcia}.

\paragraph{Tameness assumptions on representations}
When working with finite posets (or quivers) it is standard to consider only finite dimensional representations.
When the poset is infinite, the natural generalization of this tameness assumption is that of being \pfd{}.
As stated in \cref{section:main-challenges}, \pfd{} representations always decompose uniquely into indecomposables \cite{CrawleyBoevey1994,botnan-crawley}.
However, \pfd{} is often not the right tameness assumption, and here we elaborate on this.

Many of the invariants in \cref{section:invariants} and in the literature have only been worked out for finite (or otherwise discrete) posets,
and several of these (e.g.,~Betti tables) do not admit a satisfactory generalization to \pfd{} representations of posets such as $\Rbb^n$.

A more stringent tameness assumption found in the literature is finitely presentable (\fp{}).
This is sufficiently general to encompass the sublevel set multiparameter persistent homology of finite filtered simplicial complexes,
but it does not include cases such as multidimensional Morse functions.
A tameness assumption more general than \fp{}, and more restrictive than \pfd{}, is that of \emph{m-tame} representations \cite{miller-siaga}\footnote{Although the reference uses the term \emph{tame}, we use m-tame to avoid confusion with other notions of tameness in the literature; the letter ``m'' refers to the author of \cite{miller-siaga}.}.
Briefly, a \pfd{} representation $M : \Pcal \to \Veck$ is m-tame if it factors as $\Pcal \to \Qcal \to \Veck$, with $\Qcal$ a finite poset.
This is significantly more general than \fp{}; for example, it includes
the sublevel set persistence of two-dimensional Morse functions
\cite[Corollary~2.11]{budney-kaczynski},
subanalytically constructible representations \cite[Theorem~4.5']{miller-stratification},
and, in particular, the representation $\kbb_D$ in \cref{figure:spread}.
It is shown in \cite{miller-siaga} that m-tame representations can be studied via resolutions by upset-decomposable representations\footnote{Note that these resolutions are exact in the standard sense, so this does not fit the framework of relative homological algebra of \cref{section:invariants}.} (i.e., representations decomposing as a direct sum of indicator representations of upsets).
In particular, the following syzygy result is proven there:

\medskip
\noindent\emph{%
A representation over any poset is m-tame if and only if
it admits a finite resolution by upset-decomposable representations.}
\medskip

See the reference for a more general statement.
It is an open question whether there exists a uniform bound for the length of these resolutions when the poset is, e.g., $\Rbb^n$; see \cite[Section~13]{miller2020essential}.
For more on m-tame representations, see also \cite{waas2024notes}.

Let us conclude by mentioning a generalization of \pfd{} representations.
A representation of $\Rbb^n$ is \emph{q-tame} 
if all its structure morphisms corresponding to pairs of indices $r \leq s \in \Rbb^n$ with $r_i < s_i$ for all $i \in \{1, \dots, n\}$ have finite rank.
This was defined in the one-parameter case in \cite{chazal-structure-stability}, and generalized to multiparameter persistence in \cite{bauer2026metrically}, building on \cite{berkouk-petit,harsu2024ephemeral}.
There are a few reasons to go beyond \pfd{} representations.
First there are cases of interest where the sublevel set persistence of a function is not \pfd{} but is q-tame \cite{cagliari-landi}.
Second, at this level of generality the algebraic and geometric properties of poset representations are particularly compelling \cite[Theorem~1.1]{bauer2026metrically}:

\medskip
\noindent\emph{%
The observable category of q-tame representations of $\Rbb^n$ satisfies the following:
\begin{itemize}
    \item It is Abelian and Krull--Schmidt, in the sense that every object decomposes as a direct sum of indecomposables in an essentially unique way.
    \item Two objects are isomorphic if and only if they are at interleaving distance zero.
    \item The extended metric induced by the interleaving distance is complete, in the sense that every Cauchy sequence has a limit.
\end{itemize}
}
\medskip

See the reference for the notion of observable category, first introduced in the one-parameter case in \cite{chazal-crawley-silva}.

\paragraph{The special case of zero-dimensional persistent homology}
As mentioned in \cref{section:main-challenges}, common posets in multiparameter persistence are of wild representation type.
However, in cases such as level set persistence (\cref{section:levelset-persistence}), the category of representations of interest is not the whole category of representations of a wild poset, but rather a subcategory of pointwise finite representation type.
With this motivation, it is proposed in \cite{bauer-botnan-oppermann-steen,brodzki2020complexity} to study the image of zero-dimensional multiparameter persistent homology, since homology in dimension zero is often structurally simpler and can be computed more efficiently.
Equivalently, they propose to study the image of the linearization functor
$\mathrm{Set}^\Pcal \to \Veck^\Pcal$, given by postcomposition with the free vector space functor $\mathrm{Set} \to \Veck$.

Using cotorsion torsion triples, it is shown in \cite{bauer-botnan-oppermann-steen} that, in the two-parameter case, certain categories containing this image unfortunately remain of wild representation type.
However, it is shown in \cite{bindua-brustle-scoccola} that, over tree posets, the image of zero-dimensional persistent homology is of finite representation type, even though tree posets (which are also tree quivers) are usually of wild type;
the characterization is in terms of \emph{tree modules} \cite{ringel} and \emph{reduced representations} over tree quivers \cite{kinser}.
Further algebraic and representation theoretic properties of zero-dimensional persistent homology are studied in \cite{bauer2025additive,morozov-scoccola}.

\section{Computational Complexity of Persistence}
\label{section:computational-aspects}

Applications of persistence have motivated the development of many algorithms for the efficient computation of persistent homology and of poset representation invariants.
These include the following:
\begin{itemize}
    \item The \emph{persistence algorithm}, a cubic-time algorithm for computing the barcode decomposition of the persistent homology of one-parameter filtered simplicial complexes \cite{edelsbrunner-letscher-zomorodian,zomorodian-carlsson},
    a theoretically more efficient algorithm running in matrix multiplication-time \cite{milosavljevic-morozov-skraba,morozov-skraba},
    and algorithms implementing several practical speedups \cite{bauer},
    which in practice often run in nearly linear time \cite{bauer-et-al}.
    In the special case of zero-dimensional persistent homology, a loglinear-time algorithm sometimes referred to as the \emph{elder rule} \cite[Chapter~VII.2]{edelsbrunner-harer-2}.
    \item A cubic-time algorithm for computing minimal presentations and Betti tables of finitely presented representations of $\Rbb^2$ and of bifiltered simplicial complexes \cite{lesnick-wright}. In the special case of zero-dimensional persistent homology, a loglinear-time algorithm for the two-parameter case, and a quadratic-time algorithm for computing a minimal presentation over arbitrary indexing posets \cite{morozov-scoccola}.
    Newer algorithms \cite{bauer2026dualities} exploit computational advantages of persistent cohomology, and classical dualities relating
    the Betti tables of a multiparameter persistence module to those of its dual \cite{miller-alexander-duality}.
    \item An output-sensitive algorithm for computing the $\rk$-Euler characteristic (and hence also the signed barcode) of finitely presented representations of $\Rbb^2$ and of bifiltered simplicial complexes \cite{morozov2023}, which is more efficient than the more direct quartic-time algorithm \cite{BotnanOppermannOudot2023}.
    \item A poly-time algorithm for computing the generalized rank invariant of a finitely presented representation of $\Rbb^2$ on a fixed spread \cite{dey-kim-memoli}.
    It is unknown if computing the generalized persistence diagram or the spread-Euler characteristic on a fixed spread (i.e., computing the multiplicity of the fixed spread) is poly-time.
    \item A poly-time algorithm for computing $\Ycal$-Betti tables when $\Ycal$ can be indexed by a lattice in a suitable way \cite{chacholski-et-al}.
    \item An algorithm for computing the indecomposable decomposition of finitely presented representations of $\Rbb^n$ \cite{dey2025}; it runs in polynomial time on uniquely graded modules and in cubic time on spread-decomposable modules, but its worst-case complexity is not polynomial in general.
    The decomposition problem itself is nonetheless solvable in polynomial time \cite{chistov-ivanyos-karpinski}; see also \cite{mallory-sayrafi}.
    \item A cubic-time algorithm for computing a minimal presentation of the end-curves \cite[Section~8]{brustle2025counts}.
    \item A fixed-parameter tractable approximation algorithm and, for a fixed number of parameters, a polynomial-time algorithm for computing the skyscraper invariant of \cite{Fersztand-et-al}; these also compute the wall-and-chamber decomposition of a finitely presented representation \cite{fersztand-jendrysiak}.
\end{itemize}
There are also several interesting results concerning the computational and space complexity of several concepts in persistence, including:
\begin{itemize}
    \item The interleaving distance is NP-hard to approximate within any constant factor smaller than~$3$ \cite{bjerkevik-botnan-kerber}.
    Computing the $p$-presentation distance is also NP-hard for every $p \in [1,\infty)$ \cite{bjerkevik2025computing}.
    \item In two-parameter persistence, the $\rk$-Euler characteristic (and hence also the signed barcode) is of size at most cubic \cite{morozov2023}.
    The $\rk$-Betti tables are poly-size in any number of parameters \cite[Theorem~5.18]{botnan-et-al}, and conjectured to also be cubic in two-parameter persistence \cite[Conjecture~5.30]{botnan-et-al}.
    \item The end-curves are of linear size
    \cite[Theorem~D~and~Corollary~E]{brustle2025counts}.
    \item The generalized persistence diagram is not poly-size already in two-parameter persistence \cite{kim-kim-lee}.
    In fact, we conjecture that the generalized persistence diagram and the spread-Euler characteristic are of exponential size.
\end{itemize}
Here the complexity is stated in terms of the size of the input, which is usually taken to be the number of simplices in the case of filtered simplicial complexes, and the number of generators and relations in a minimal presentation in the case of representations;
see the references, e.g., \cite[Result~E]{botnan-et-al}.

There is an important caveat here:
When computing homology in dimension~$d$, a geometric complex $K$ (\cref{section:persistence-data-analysis}) is often of size $|K| \in O(|X|^{d+2})$, where $X$ is the input data.
Thus, algorithms of polynomial complexity $O(|K|^q)$ in the input simplicial complex $K$ end up being of complexity $O(|X|^{(d+2)q})$ in this case.
This has motivated the study of sparsification of geometric complexes
\cite{mischaikow-nanda,boissonat-pritam,sheehy,lesnick2024sparse}
and of alternatives to geometric complexes based on covers \cite{leitao2026s,pmlr-v267-scoccola25a}.

%% file: content-appendix.tex
\appendix

\addtocontents{toc}{\protect\setcounter{tocdepth}{1}}

\section{Proofs}

\subsection{Betti Tables and M\"obius Homology}
\label{section:mobius-homology-equals-relative}

Fix a finite poset $\Pcal$.
In this appendix we prove the following results, which relate M\"obius homology (denoted $H^\downarrow$) \cite{patel-skraba} to previous invariants in persistence, specifically standard Betti tables and Betti tables relative to the rank exact structure.
See below for notations used.

\begin{theorem}
    \label{theorem:main-thm-1}
    If $M \in \veck^\Pcal$, $i \in \Pcal$, and $k \in \Nbb$,
    then $\dim (H_k^\downarrow M)(i) = \beta_M^k(i)$.
\end{theorem}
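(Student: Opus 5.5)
The plan is to identify Möbius homology with a $\Tor$ computation over the incidence algebra $\Lambda=\kbb\Pcal$, and then use the standard dictionary between $\Tor$, $\Ext$ into simples, and minimal projective resolutions. Throughout I take Möbius homology in the form I expect from its construction: $(H^\downarrow_k M)(i)$ is the $k$th homology of the complex
\[
C_k(M)(i)\;=\;\bigoplus_{x_0<x_1<\cdots<x_{k-1}<x_k=i} M(x_0).
\]
Its differential is the alternating sum of face maps $d_0,\dots,d_{k-1}$. Deleting $x_0$ uses the structure map $M(x_0)\to M(x_1)$; deleting an interior $x_j$ is the identity on $M(x_0)$. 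The top face, which would delete $x_k=i$, is absent. If the paper's notation differs (for instance by dualizing, or by reversing chains), the argument below adapts verbatim.

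\textbf{Step 1.} Let $E=\kbb^{\Pcal}=\Lambda/\mathrm{rad}\,\Lambda$ be the semisimple subalgebra of idempotents, and $\bar\Lambda=\mathrm{rad}\,\Lambda$, which has basis the strict relations $x<y$. Consider the normalized bar resolution of $M$ relative to $E$,
\[
B_k=\Lambda\otimes_E\bar\Lambda^{\otimes_E k}\otimes_E M .
\]
This is an $E$-relatively projective, and hence $\Lambda$-projective, resolution of $M$, since every $E$-module is projective. Exactness follows from the usual contracting homotopy $s(a_0\otimes\cdots)=1\otimes a_0\otimes\cdots$, which is $E$-linear.

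\textbf{Step 2.} Apply $S_i\otimes_\Lambda-$, where $S_i$ is the right simple at $i$. Because the tensor products are over $E$, a basis computation identifies $S_i\otimes_\Lambda B_k$ with $\bigoplus_{x_0<\cdots<x_k=i}M(x_0)$. Under this identification, the bar differential becomes exactly the complex $C_\bullet(M)(i)$. The face involving multiplication into $S_i$ vanishes, since $S_i\cdot\bar\Lambda=0$; this is why the top face is missing. The multiplication $\bar\Lambda\otimes M\to M$ gives the structure-map face. Hence $(H^\downarrow_k M)(i)\cong\Tor^\Lambda_k(S_i,M)$.

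\textbf{Step 3.} Let $P_\bullet\to M$ be a minimal projective resolution. In $P_k$, the multiplicity of the projective $\kbb_{\{q\geq p\}}=\Lambda e_p$ is $\beta_M^k(p)$. Minimality means that the differentials of $S_i\otimes_\Lambda P_\bullet$ and of $\Hom_\Lambda(P_\bullet,\kbb_i)$ vanish. Since $S_i\otimes_\Lambda\Lambda e_p=\kbb^{\delta_{ip}}$, we get
\[
\dim\Tor_k(S_i,M)=\beta_M^k(i)=\dim\Ext^k(M,\kbb_i).
\]
Combining this with Step 2 gives the theorem.

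The main obstacle I expect is Step 2: checking carefully that the paper's explicit Möbius chain complex coincides, with signs and with the correct set of faces, with $S_i\otimes_\Lambda(\text{normalized bar resolution})$. In particular, one must confirm that the omitted face is exactly the one killed by $S_i$, and that chains are strict, which corresponds to the normalization using $\bar\Lambda$ rather than $\Lambda$. If the definition instead runs over chains starting at $i$ with values $M(x_k)$, I would run the same argument with the left simple and $D=\Hom_\kbb(-,\kbb)$ duality, replacing $\Tor$ by $\Ext^k(M,\kbb_i)$ computed from $\Hom_\Lambda(B_\bullet,\kbb_i)$.
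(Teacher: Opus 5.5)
Your argument is correct, and it takes a different route from the paper.

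\textbf{What the paper does.} It never builds a resolution. It sets $F_n(M)=(H^\downarrow_n M)(i)^*$ and checks the three axioms that characterize $\Ext^\bullet(-,\Ssf_i)$:
\begin{itemize}
\item long exact sequences arising from short exact sequences of coefficients;
\item vanishing of $F_n\Psf_j$ for $n>0$, computed via contractibility of the order complexes $\Delta\Rcal$ and $\Delta\Rcal_{<i}$;
\item a natural isomorphism $F_0M\cong\Hom(M,\Ssf_i)$.
\end{itemize}
It then invokes the universal property of $\Ext$ (cited from Mac~Lane) and the standard lemma identifying $\dim\Ext^k(M,\Ssf_i)$ with the Betti number.

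\textbf{What you do instead.} You identify the M\"obius chain complex itself with $S_i\otimes_\Lambda(\text{normalized bar resolution relative to }E)$. Exactness of the bar resolution then replaces the acyclicity computation, and you land on $\Tor$ with no dualization. This is the alternative the paper only mentions in a remark.

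\textbf{Your guessed complex matches the paper's.} It is exactly $C_\bullet(\Delta\Pcal_{\le i},\Delta\Pcal_{<i};\underline{M})$. A chain $x_0<\cdots<x_k$ carries $M(x_0)$, the relative chains are those with $x_k=i$, and the missing face is the one landing in $\Delta\Pcal_{<i}$. The proofs of the paper's acyclicity and degree-zero lemmas make this visible.

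\textbf{Conventions.} With the paper's product $[i,j][j,\ell]=[i,\ell]$, representations are right $\kbb\Pcal$-modules. So the literal version uses $M\otimes_E\bar\Lambda^{\otimes_E k}\otimes_E\Lambda$ together with the left simple. In that version the bar signs match the simplicial ones exactly. In your left-module version they differ by $(-1)^k$ in degree $k$, which is harmless.

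\textbf{Trade-offs.} Your approach gives a chain-level isomorphism and a transparent reason why M\"obius homology is a derived functor. The paper's approach does not depend on the fine details of the chain complex and needs only three easily checked properties of $H^\downarrow$. Its price is the dualization and the appeal to an external universal-property theorem.
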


\begin{theorem}
    \label{theorem:main-thm-2}
    If $M \in \veck^\Pcal$, $a < b \in \Pcal \cup \{\infty\}$, and $k \in \Nbb$,
    then $\dim (H_k^\downarrow K_M)(a,b) = \rho_M^k(a,b)$.
\end{theorem}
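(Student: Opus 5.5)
The plan is to reduce \cref{theorem:main-thm-2} to \cref{theorem:main-thm-1} by projectivization, in the sense recalled in \cref{section:invariants}. Rank-exact Betti tables become standard Betti tables of $K_M$, viewed as a representation of the finite poset of pairs $\Qcal \coloneqq \{(a,b) : a < b \in \Pcal \cup \{\infty\}\}$. I am assuming that $K_M$ is the representation $(a,b) \mapsto \ker\big(M(a) \to M(b)\big)$, with $M(\infty) = 0$. Under this assumption, $K_M(a,b) \cong \Hom\big(\kbb_{[a,b)}, M\big)$, where $\kbb_{[a,b)}$ is the indicator representation of the hook $\{q : q \geq a,\ q \not\geq b\}$. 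These hooks are, up to isomorphism, the indecomposable $\rkit$-projectives \cite{botnan-et-al}.

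\textbf{Step 1 (the hook category is an incidence algebra).} I would compute $\Hom\big(\kbb_{[a,b)}, \kbb_{[c,d)}\big)$ directly. It is one-dimensional exactly when $c \leq a$ and $d \leq b$ (with $\infty$ maximal), and zero otherwise. Composition of nonzero maps is nonzero. Hence the full subcategory on the hooks is the $\kbb$-linearization of a poset structure on $\Qcal$. After choosing the correct variance (I would order $\Qcal$ so that $(a,b) \mapsto \Hom(\kbb_{[a,b)}, M)$ is covariant), the endomorphism algebra $\End\big(\bigoplus_{(a,b)} \kbb_{[a,b)}\big)$ becomes the incidence algebra of $\Qcal$. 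Consequently $\Phi \coloneqq \Hom\big(\bigoplus \kbb_{[a,b)}, -\big)$ lands in $\veck^\Qcal$, and $\Phi(M) = K_M$.

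\textbf{Step 2 (exactness and projectives).} By definition of the rank exact structure, a short sequence in $\veck^\Pcal$ is $\rkit$-exact if and only if it stays exact after applying $\Hom(\kbb_{[a,b)}, -)$ for all hooks. This is the characterization of \cite{botnan-et-al} of the rank exact structure as the one relatively generated by hooks. So $\Phi$ sends $\rkit$-exact sequences to exact sequences. By Yoneda, $\Phi$ restricted to hook-decomposable representations is fully faithful. It sends $\kbb_{[a,b)}$ to the indecomposable projective of $\veck^\Qcal$ at $(a,b)$. Therefore a minimal $\rkit$-projective resolution of $M$ maps under $\Phi$ to a projective resolution of $K_M$. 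This resolution is minimal, because full faithfulness preserves radical morphisms and radical morphisms characterize minimality. Comparing multiplicities gives $\rho^k_M(a,b) = \beta^k_{K_M}(a,b)$.

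\textbf{Step 3.} Apply \cref{theorem:main-thm-1} to $K_M \in \veck^\Qcal$, which is legitimate since $\Qcal$ is finite. This gives $\dim (H_k^\downarrow K_M)(a,b) = \beta^k_{K_M}(a,b) = \rho^k_M(a,b)$. I expect the main obstacle to be Step 1 together with the bookkeeping in Step 2. The order and variance on $\Qcal$, and the handling of $b = \infty$, must match exactly the conventions the paper uses for $K_M$ and for $H^\downarrow$ on pairs. I would fix these first by checking small cases, for instance $\Pcal$ a two-element chain, where everything can be computed by hand.
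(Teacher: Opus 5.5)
There is a genuine gap in your Step 1, and everything downstream inherits it. Your Hom computation misses a condition. Writing $\Lsf_{a,b}$ for your $\kbb_{[a,b)}$, the space $\Hom(\Lsf_{a,b},\Lsf_{c,d})\cong\ker\big(\Lsf_{c,d}(a)\to\Lsf_{c,d}(b)\big)$ is nonzero iff $c\le a$, $d\le b$, \emph{and} $d\not\le a$. So nonzero maps between hooks can compose to zero, and the hook category is not the linearization of any poset.

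Already for $\Pcal=\{1<2\}$, the nonzero maps $\Lsf_{2,\infty}=\Psf_2\hookrightarrow\Psf_1=\Lsf_{1,\infty}\twoheadrightarrow\Ssf_1=\Lsf_{1,2}$ compose to zero. Hence $\End(\Lcal)$ is the path algebra of the linear $A_3$ quiver modulo the path of length two. It is not the incidence algebra of the chain $\Qcal=\{(1,2)<(1,\infty)<(2,\infty)\}$. In general $\End(\Lcal)\cong\kbb(\Int\Pcal)/(e)$: the relations $(a,b)\le(c,d)$ with $b\le c$ factor through a diagonal pair and are killed.

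This has three consequences:
\begin{enumerate}
\item $\Hom(\Lcal,\Lsf_{a,b})$ is supported on $\{(c,d): a\le c,\ b\le d,\ b\not\le c\}$, so it is not the indecomposable projective of $\veck^\Qcal$ at $(a,b)$.
\item The $\Phi$-image of a rank-projective resolution is a projective resolution over $\End(\Lcal)$, but not over $\kbb\Qcal$.
\item Your identity $\rho^k_M=\beta^k_{K_M|_\Qcal}$ is false. For $M=\Ssf_1=\Lsf_{1,2}$ one has $\rho^1_M=0$. But $K_M|_\Qcal=\kbb_{\{(1,2),(1,\infty)\}}$ has minimal resolution $0\to\Psf_{(2,\infty)}\to\Psf_{(1,2)}\to K_M|_\Qcal\to 0$ over the chain $\Qcal$.
\end{enumerate}
The same example shows that Step 3 computes the wrong quantity: over $\Qcal$ one gets $\dim(H_1^\downarrow K_M)(2,\infty)=1$, while over $\Int\Pcal$ this dimension is $0$. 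This is not a matter of orientation conventions or of handling $b=\infty$.

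What makes the theorem true is precisely the diagonal you dropped. In the statement, $K_M$ is a representation of $\Int\Pcal$, which contains the pairs $(c,c)$ where $K_M$ vanishes, and $H^\downarrow$ is computed over $\Int\Pcal$. In the example, the minimal resolution over $\Int\Pcal$ is $0\to\Psf_{(2,2)}\to\Psf_{(1,2)}\to K_M\to 0$. The diagonal projectives absorb exactly the zero relations of $\End(\Lcal)$, and the off-diagonal terms match $\rho_M$.

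The missing step is a comparison of resolutions over $\kbb(\Int\Pcal)$ and over the quotient $\kbb(\Int\Pcal)/(e)$. Apply $-\otimes_{\kbb(\Int\Pcal)}\kbb(\Int\Pcal)/(e)$ to the minimal projective resolution of $K_M$. This sends $K_M$ to $\Hom(\Lcal,M)$, sends $\Psf_{(a,b)}$ to $\Hom(\Lcal,\Lsf_{a,b})$ for $a<b$, and sends $\Psf_{(c,c)}$ to $0$. Exactness survives because $\Tor_d^{\kbb(\Int\Pcal)}\big(K_M,\kbb(\Int\Pcal)/(e)\big)=0$ for $d>0$. Proving this vanishing uses two facts: the structure map $K_M(a,b)\to K_M(c,d)$ is zero whenever $b\le c$, and $(e)$ is projective as a left module. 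Once this is in place, your Step 2 finishes the proof: the projectivization of the rank exact structure, together with the radical-morphism argument for minimality.
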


If $i \in \Pcal$, let $\Psf_i \in \veck^\Pcal$ denote the corresponding indecomposable projective; this is the spread module with support the principal upset $\Pcal_{\geq i} \subseteq \Pcal$ corresponding to $i$.
Recall that any projective in $\veck^\Pcal$ is a finite direct sum of modules of the form $\Psf_i$.
Let also $\Ssf_i \in \veck^\Pcal$ denote the corresponding simple; this is the spread module with support $\{i \} \subseteq \Pcal$.

The \define{incidence algebra} of $\Pcal$, denoted $\kbb \Pcal$, is the $\kbb$-algebra generated, as a vector space, by pairs $[i,j]$ with $i \leq j \in \Pcal$, and with multiplication given by linearly extending the rule $[i,j] [k,\ell] = 0$ if $j \neq k$ and $[i,j] [j,\ell] = [i, \ell]$.
It is a standard fact that there is an equivalence of categories $\veck^\Pcal \simeq \mod_{\kbb \Pcal}$; see, e.g., \cite[Lemma~2.1]{botnan-et-al}.
In particular, this implies that any representation $M \in \veck^\Pcal$ admits a (finite) projective cover, unique up to isomorphism \cite[Chapter~I,~Theorem~4.2]{auslander-reiten-smalo}, and thus a projective resolution, also unique up to isomorphism.

Let $\Int\Pcal = \{(a,b) : a \leq b \in \Pcal \cup \{\infty\}\}$ with the product order.
Given $M \in \veck^\Pcal$, we get $K_M \in \veck^{\Int\Pcal}$ defined as $K_M(a,b) \coloneqq \ker(M(a) \to M(b))$, with the convention that $M(\infty) = 0$.

\begin{definition}
    Let $M \in \veck^\Pcal$ and $k \in \Nbb$.
    The \define{$k$th Betti table} of $M$ is the function $\beta_M^k : \Pcal \to \Zbb$
    where $\beta_M^k(i)$ equals the multiplicity of $\Psf_i$ in the $k$th homological degree of the minimal projective resolution of $M$.
\end{definition}

\begin{definition}
    The \define{hook module} corresponding to $(a,b) \in \{(a,b) \in \Int \Pcal : a \neq b\}$ is $\Lsf_{a,b} \in \veck^\Pcal$ given as the cokernel of $\Psf_b \xrightarrow{1} \Psf_a$ if $b < \infty$ and simply by $\Psf_a$ if $b = \infty$.
    A module is \define{hook-decomposable} if it is a direct sum of hooks.
\end{definition}

\begin{definition}
    Let $M \in \veck^\Pcal$ and $k \in \Nbb$.
    The \define{$k$th Betti table relative to the rank exact structure} of $M$ is the function $\rho_M^k : \{(a,b) \in \Int \Pcal : a \neq b\} \to \Zbb$
    where $\rho_M^k(a,b)$ equals the multiplicity of $\Lsf_{a,b}$ in the $k$th homological degree of the minimal rank-projective resolution of $M$.
\end{definition}

\begin{notation}
For readability, fix $i \in \Pcal$ and let $F_n(M) \coloneqq (H_n^\downarrow(M))(i)^*$.
\end{notation}

\begin{lemma}
    \label{lemma:ses-les}
    Let $0 \to L \to N \to M \to 0$ be a short exact sequence of representations $L,N,M \in \veck^\Pcal$.
    There exists a long exact sequence as follows, functorial as a map from short exact sequences to long exact sequences:
\begin{align*}
    0 &\to F_0 M \to F_0 N \to F_0 L \to F_1 M \to F_1 N \to F_1 L \to \cdots \\
    \cdots &\to F_n M \to F_n N \to F_n L \to F_{n+1} M \to \cdots
\end{align*}
\end{lemma}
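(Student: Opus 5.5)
The plan is to obtain the long exact sequence as the $\kbb$-dual of the homology long exact sequence of a short exact sequence of chain complexes. Recall that M\"obius homology $H^\downarrow_\bullet(M)$ is defined \cite{patel-skraba} as the homology of a chain complex $C^\downarrow_\bullet(M)$ of representations of $\Pcal$. After evaluating at the fixed $i \in \Pcal$, each term $C^\downarrow_n(M)(i)$ is a finite direct sum of vector spaces $M(j)$. These summands are indexed by combinatorial data of $\Pcal$ alone, namely strict chains in $\Pcal$ related to $i$. The differentials are signed sums of structure maps of $M$.

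\emph{Step 1.} Check that $M \mapsto C^\downarrow_n(M)(i)$ is an exact functor $\veck^\Pcal \to \veck$ for each $n$, and that the differentials are natural in $M$. Evaluation $M \mapsto M(j)$ is exact and finite direct sums of exact functors are exact, so it suffices that the indexing set does not depend on $M$. Naturality follows because the differentials are built from structure maps of $M$, which commute with any morphism of representations.

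\emph{Step 2.} By Step 1, a short exact sequence $0 \to L \to N \to M \to 0$ yields a short exact sequence of chain complexes of finite-dimensional vector spaces
\[
0 \to C^\downarrow_\bullet(L)(i) \to C^\downarrow_\bullet(N)(i) \to C^\downarrow_\bullet(M)(i) \to 0 .
\]
The snake lemma then gives the long exact sequence
\[
\cdots \to H^\downarrow_{n+1}(M)(i) \to H^\downarrow_n(L)(i) \to H^\downarrow_n(N)(i) \to H^\downarrow_n(M)(i) \to \cdots \to H^\downarrow_0(M)(i) \to 0 .
\]
This sequence is functorial in the short exact sequence, by the standard naturality of connecting homomorphisms.

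\emph{Step 3.} Apply the exact contravariant functor $(-)^*$ to this sequence; duality of vector spaces is exact. Dualizing reverses the arrows and turns the terminal $\to 0$ into an initial $0 \to$. This gives exactly
\[
0 \to F_0 M \to F_0 N \to F_0 L \to F_1 M \to \cdots .
\]
Functoriality is preserved because $(-)^*$ is a functor.

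The main obstacle is Step 1: it relies on the precise form of the definition of $H^\downarrow$. If the chain complex in \cite{patel-skraba} involves quotients or kernels rather than plain direct sums of evaluations, exactness in $M$ is not automatic. In that case I would first rewrite $C^\downarrow_\bullet(M)(i)$ as $\Hom$ from, or a tensor with, a fixed complex of projective (or flat) objects built from $\Pcal$. For example, one could write $C^\downarrow_n(M)(i) \cong \Hom_{\kbb\Pcal}(Q_n, M)$ with each $Q_n$ a sum of indecomposable projectives $\Psf_j$. By Yoneda, $\Hom(\Psf_j, M) \cong M(j)$, which is exact in $M$, and the argument of Steps 2 and 3 goes through unchanged.
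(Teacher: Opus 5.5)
Your proposal is correct and takes essentially the same route as the paper. The paper likewise observes that a short exact sequence of coefficients induces a long exact sequence in the relative homology $C_\bullet(\Delta\Pcal_{\le i},\Delta\Pcal_{<i};-)$ defining $H^\downarrow$, and then dualizes with the exact contravariant functor $(-)^*$. Your Step~1 supplies the exactness justification that the paper leaves implicit, and your fallback plan is not needed: the relative chain group in degree $n$ is the direct sum of the spaces $M(a_0)$ over chains $a_0 < \cdots < a_n = i$, so it is exact in $M$.
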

\begin{proof}
    Since M\"obius homology is defined pointwise using relative homology of a category with coefficients in a functor, a short exact sequence of coefficients induces a long exact sequence of relative homology (this is used in, e.g., \cite[Proposition~6.14]{patel-skraba}).
    As $H^\downarrow$ is covariant in the coefficients, this long exact sequence increases in homological degree and has the form $\cdots \to H_n^\downarrow L \to H_n^\downarrow N \to H_n^\downarrow M \to H_{n-1}^\downarrow L \to \cdots$; applying the exact contravariant functor $(-)^*$ reverses all arrows and yields the stated sequence for $F_\bullet$.
\end{proof}

\begin{lemma}
    \label{lemma:mobius-homology-acyclic}
    We have $F_n \Psf_j \cong \kbb$ if $n=0$ and $i=j$, and $F_n \Psf_j = 0$ otherwise.
\end{lemma}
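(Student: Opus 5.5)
The plan is to compute $H_\bullet^\downarrow \Psf_j$ at $i$ directly from the definition of M\"obius homology as relative homology of a category with coefficients in a functor. I will take the definition in the form used in \cite{patel-skraba}: $(H_n^\downarrow M)(i)$ is the homology of the pair $(\Pcal_{\leq i}, \Pcal_{<i})$ with coefficients in the restriction of $M$. Concretely, it is the relative homology of the map of (simplicial replacement) chain complexes
\[
C_\bullet(\Pcal_{<i}; M) \to C_\bullet(\Pcal_{\leq i}; M),
\]
where $C_\bullet(\Qcal; M)$ computes the derived colimit (left derived functors of $\mathrm{colim}$) of $M|_\Qcal$. This gives a long exact sequence
\[
\cdots \to H_n(\Pcal_{<i}; \Psf_j) \to H_n(\Pcal_{\leq i}; \Psf_j) \to (H_n^\downarrow \Psf_j)(i) \to H_{n-1}(\Pcal_{<i}; \Psf_j) \to \cdots,
\]
the same one underlying \cref{lemma:ses-les}. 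Dualizing at the end converts the statement about $H^\downarrow_n$ into one about $F_n$; since $(-)^*$ is exact and $\kbb^*\cong\kbb$, it suffices to prove $(H_n^\downarrow \Psf_j)(i) \cong \kbb$ when $n = 0$ and $i=j$, and $=0$ otherwise.

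The key input is that, for any subposet $\Qcal \subseteq \Pcal$ that is a downset, the restriction $\Psf_j|_\Qcal$ is either zero (if $j\notin \Qcal$) or the representable functor $\kbb\Hom_\Qcal(j,-)$ (if $j \in \Qcal$). This holds because $\Pcal_{\geq j}\cap \Qcal$ is the principal upset of $j$ in $\Qcal$. By the (enriched) Yoneda lemma, a representable functor has derived colimit concentrated in degree $0$, with $\mathrm{colim}\, \kbb\Hom_\Qcal(j,-)\cong \kbb$. Equivalently, the simplicial replacement is the nerve of the coslice $j/\Qcal$, which has an initial object and is therefore contractible. Applying this with $\Qcal = \Pcal_{\leq i}$ and $\Qcal = \Pcal_{<i}$ splits the argument into three cases.

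If $j \not\leq i$, both coefficient functors vanish, so the relative homology is $0$. If $j<i$, both absolute homologies are $\kbb$ in degree $0$ and vanish otherwise. The induced map $H_0(\Pcal_{<i};\Psf_j)\to H_0(\Pcal_{\leq i};\Psf_j)$ sends the class of the generator at $j$ to the class of the generator at $j$, so it is an isomorphism $\kbb\to\kbb$, and the long exact sequence forces the relative groups to vanish in all degrees. If $j=i$, then $\Psf_i|_{\Pcal_{<i}}=0$ while $\Psf_i|_{\Pcal_{\leq i}}$ is representable, so the relative homology equals the absolute one: $\kbb$ in degree $0$ and $0$ elsewhere.

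I expect the main obstacle to be bookkeeping rather than mathematics. One must match the precise conventions of \cite{patel-skraba}: whether the pair is $(\Pcal_{\leq i},\Pcal_{<i})$ or a dual version using upsets and limits, and whether coefficients are restricted covariantly. The case $j<i$, which requires the map on $H_0$ to be an isomorphism, is the only point that needs care. If the convention turns out to be the upset/limit version, I would instead use the dual fact that limits of corepresentable-type functors $\Psf_j$ over $\Pcal_{\geq i}$ are acyclic, and run the same three-case argument.
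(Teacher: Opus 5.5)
Your proposal is correct and is essentially the paper's proof: the paper uses exactly your convention (the relative complex $C_\bullet(\Delta\Pcal_{\leq i},\Delta\Pcal_{<i};\Psf_j)$, with each chain weighted by the value at its minimum) and the same three cases $j\not\leq i$, $j=i$, $j<i$. The only difference is cosmetic and occurs in the case $j<i$: the paper identifies the relative complex directly with $C_\bullet(\Delta\Rcal,\Delta\Rcal_{<i};\kbb)$ for $\Rcal=\{a : j\leq a\leq i\}$ and uses contractibility of both order complexes, which is what your long exact sequence together with acyclicity of representables (the cone on $j$) amounts to.
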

\begin{proof}
    If $j \nleq i$, then $C_\bullet(\Delta \Pcal_{\leq i}, \Delta \Pcal_{< i}; \Psf_j) = 0$, since $\Psf_j$ is $0$ on $\Pcal_{\leq i}$.

    If $j = i$, then $C_\bullet(\Delta \Pcal_{\leq i}, \Delta \Pcal_{< i}; \Psf_j) = C_\bullet(\Delta \Pcal_{\leq j}; \Psf_j)$, because $\Psf_j$ is zero on any element in $\Pcal_{< j}$.
    Since $\Psf_j$ restricted to $\Pcal_{\leq j}$ is $\kbb$ at $j$ and $0$ elsewhere, we have that $F_n(\Psf_j)$ is $0$ if $n>0$ and $\kbb$ if $n=0$.

    If $j < i$, let $\Rcal = \{a \in \Pcal : j \leq a \leq i\}$.
    Then $C_\bullet(\Delta \Pcal_{\leq i}, \Delta \Pcal_{< i}; \Psf_j) = C_\bullet(\Delta \Rcal, \Delta \Rcal_{< i}; \kbb)$, since any chain in $\Pcal$ whose starting element is strictly less than $j$ gets assigned the zero vector space, and every other chain gets assigned $\kbb$.
    Now, since $\Rcal$ has a maximal element, the simplicial complex $\Delta \Rcal$ is contractible, and since $\Rcal_{< i}$ has a minimal element, the simplicial complex $\Delta \Rcal_{< i}$ is also contractible.
    Since the homology of $C_\bullet(\Delta \Rcal, \Delta \Rcal_{< i}; \kbb)$ is usual relative homology of simplicial complexes, it follows that this homology is isomorphic to $\tilde{H}_\bullet(\Delta \Rcal /(\Delta \Rcal_{< i}))$, which is the reduced homology of a quotient of contractible simplicial complexes, and thus zero.
\end{proof}

\begin{lemma}
    \label{lemma:mobius-homology-degree-zero}
    Let $M \in \veck^\Pcal$.
    Then $F_0 M \cong \Hom(M, \Ssf_i)$, natural in $M$.
\end{lemma}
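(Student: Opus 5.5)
The plan is to compute $H_0^\downarrow M$ at $i$ directly from the relative chain complex $C_\bullet(\Delta\Pcal_{\leq i},\Delta\Pcal_{<i};M)$ used in the proof of \cref{lemma:mobius-homology-acyclic}, and then dualize. That proof implicitly fixes the convention that a chain $a_0<a_1<\cdots<a_n$ in $\Pcal_{\leq i}$ is assigned the vector space $M(a_0)$, the value at its starting element.

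First identify the low-degree relative chains. Only chains not contained in $\Pcal_{<i}$ survive in the relative complex, so every surviving chain must end at $i$. In degree $0$ the only such chain is the vertex $i$, giving $C_0 = M(i)$. In degree $1$ the surviving chains are the edges $a<i$ with $a\in\Pcal_{<i}$, giving $C_1=\bigoplus_{a<i}M(a)$.

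Next compute the boundary $C_1\to C_0$. The face of the edge $a<i$ obtained by deleting $i$ is the vertex $a$, which lies in the subcomplex and so vanishes. The face obtained by deleting $a$ is the vertex $i$. Because the starting element changes from $a$ to $i$, the coefficient is transported along the structure map $\varphi^M_{a\leq i}$. Hence, up to sign,
\[
H_0^\downarrow(M)(i)\;\cong\;\coker\Big(\textstyle\bigoplus_{a<i}M(a)\xrightarrow{\;(\varphi^M_{a\le i})_a\;}M(i)\Big)\;=\;M(i)\Big/\textstyle\sum_{a<i}\im\varphi^M_{a\le i}.
\]
Dualizing gives
\[
F_0M\;\cong\;\Big\{\lambda\in M(i)^*:\ \lambda\circ\varphi^M_{a\le i}=0 \text{ for all } a<i\Big\}.
\]

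Now compare this with $\Hom(M,\Ssf_i)$. A morphism $f\colon M\to\Ssf_i$ has only one possibly nonzero component, $f_i\colon M(i)\to\kbb$. Naturality squares for $a\leq b$ with $a,b\neq i$ are trivially satisfied. For $i<b$ the square commutes automatically because $\Ssf_i(b)=0$. For $a<i$ the square reads $f_i\circ\varphi^M_{a\le i}=0$, because $\Ssf_i(a)=0$. So $f\mapsto f_i$ identifies $\Hom(M,\Ssf_i)$ with exactly the subspace of $M(i)^*$ computed above.

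Finally, check naturality in $M$. For a morphism $g\colon M\to N$, both constructions act by precomposition with $g_i$: on the Hom side by definition, and on the Möbius side because the chain map induced by $g$ is $g_i$ in degree $0$. The main point of care, and the likely obstacle, is matching the paper's exact convention for the coefficient system in $C_\bullet$ (value at the first versus last element of a chain, and the direction in which faces are transported). If the convention turns out to be different, I would use an alternative argument instead. By \cref{lemma:ses-les}, $F_0$ is a left exact contravariant functor, and so is $\Hom(-,\Ssf_i)$. By \cref{lemma:mobius-homology-acyclic}, the two agree on the indecomposable projectives $\Psf_j$, since $\Hom(\Psf_j,\Ssf_i)\cong\Ssf_i(j)$. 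Applying both functors to a projective presentation $P_1\to P_0\to M\to 0$ and using the five lemma then yields the isomorphism. That route, however, requires checking naturality of the comparison on morphisms between projectives, which is why I prefer the direct computation.
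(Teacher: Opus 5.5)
Your proposal is correct and follows the same route as the paper, which proves the lemma by the chain of natural isomorphisms $\Hom(M,\Ssf_i)\cong\big(M(i)/\sum_{j<i}\im(M(j)\to M(i))\big)^*\cong\coker\big(C_1\to C_0\big)^*\cong (H_0^\downarrow M)(i)^*$ in the relative complex $C_\bullet(\Delta\Pcal_{\leq i},\Delta\Pcal_{<i};M)$. Your write-up supplies the details the paper leaves implicit (the low-degree relative chains, the boundary map, the description of morphisms into $\Ssf_i$, and naturality), your chain convention matches the paper's, and the fallback argument via left exactness and the five lemma is not needed.
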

\begin{proof}
    We have the following isomorphisms, natural in $M$:
    \begin{align*}
        \Hom(M, \Ssf_i) &\cong \left(\frac{M(i)}{\sum_{j < i} \im(M(j) \to M(i))}\right)^*\\
                        &\cong \coker\big( C_1(\Delta \Pcal_{\leq i}, \Delta \Pcal_{< i}; \underline{M}) \to C_0(\Delta \Pcal_{\leq i}, \Delta \Pcal_{< i}; \underline{M}) \big)^*\\
                        &\cong (H_0^\downarrow M)(i)^* \cong F_0 M.\qedhere
    \end{align*}
\end{proof}

The following can be interpreted as a universal property/representability theorem of $\Ext$.
A more general universal property for derived functors can be proven using the language of $\delta$-functors (see, e.g., \cite{weibel}).
Informally, the basic idea is that two homology theories coincide when they have the same acyclic objects and are naturally isomorphic in degree zero.

\begin{theorem}[{\cite[Theorem~10.1]{maclane}}]
    \label{theorem:ext-universal-property}
    Let $R$ be a ring and $G$ an $R$-module.
    Let $\{Ex^n(-) : \mod_R^{\mathrm{op}} \to \Ab\}_{n \in \Nbb}$ be a family of functors satisfying the following properties:
    \begin{enumerate}
        \item $Ex^-(-)$ extends to a functor from short exact sequences to long exact sequences;
        \item $Ex^n(F) = 0$ for $n>0$ and all free $F \in \mod_R$;
        \item there is a natural isomorphism $Ex^0(M) \cong \Hom(M,G)$.
    \end{enumerate}
    Then, there are natural isomorphisms $\Ext^n(-,G) \cong Ex^n(-)$ for all $n \in \Nbb$.
    \qed
\end{theorem}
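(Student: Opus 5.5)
The plan is to argue by dimension shifting, using that $\Ext^\bullet(-,G)$ itself satisfies properties (1)--(3). Then it suffices to show that any two families $\{Ex^n\}$ and $\{Ex'^n\}$ satisfying (1)--(3) are naturally isomorphic, with the isomorphism compatible with connecting morphisms. We build the isomorphisms $\theta^n_M\colon Ex^n(M)\to Ex'^n(M)$ by induction on $n$. For $n=0$ we take the composite of the two isomorphisms with $\Hom(-,G)$ given by (3).

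For the inductive step, fix $M$ and choose a free presentation $0\to K\to F\to M\to 0$. The long exact sequences from (1) for $Ex$ and $Ex'$ give
\[
Ex^{n-1}(F)\to Ex^{n-1}(K)\xrightarrow{\ \delta\ } Ex^n(M)\to Ex^n(F)=0,
\]
using (2). For $n\geq 2$ the left term also vanishes, so $\delta$ is an isomorphism. For $n=1$, $\delta$ identifies $Ex^1(M)$ with the cokernel of $Ex^0(F)\to Ex^0(K)$. In either case the same holds for $Ex'$. The already constructed $\theta^{n-1}$ is natural, so it commutes with the restriction maps along $K\to F$. It therefore induces a unique isomorphism $\theta^n_M\colon Ex^n(M)\to Ex'^n(M)$ with $\theta^n_M\circ\delta=\delta'\circ\theta^{n-1}_K$. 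In the case $n=1$ this is the map induced on cokernels, and it is an isomorphism by the five lemma.

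The main obstacle is showing that $\theta^n_M$ is independent of the chosen presentation and natural in $M$. Both follow from a single argument. Let $f\colon M\to M'$, and fix presentations $0\to K\to F\to M\to 0$ and $0\to K'\to F'\to M'\to 0$. Since $F$ is free (hence projective), $f$ lifts to a map $F\to F'$, which restricts to a map $K\to K'$; this gives a morphism of short exact sequences. By the functoriality in (1), the connecting maps commute with the induced maps, and $\theta^{n-1}$ is natural on $K\to K'$. Chasing the square therefore gives $\theta^n_M\circ Ex^n(f)=Ex'^n(f)\circ\theta^n_{M'}$, where the $\theta$'s are computed with the respective presentations. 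We use surjectivity of $\delta$ to reduce to elements in the image of $Ex^{n-1}(K')$.

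Taking $f=\mathrm{id}_M$ with two different presentations shows that $\theta^n_M$ does not depend on the presentation. The general case then gives naturality. Finally, one checks that the $\theta$'s commute with connecting morphisms for an arbitrary short exact sequence $0\to L\to N\to M\to 0$. To do this, map a free presentation of $M$ into that sequence, lifting $F\to M$ through $N$, and use the functoriality in (1) once more. This compatibility is what keeps the induction going. Applying the result with $Ex'=\Ext^\bullet(-,G)$ gives the claim.
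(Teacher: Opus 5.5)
Your argument is correct and is the standard dimension-shifting proof of the cited result from Mac Lane, which the paper quotes without proof. The paper's informal gloss is that agreement on acyclic objects plus a natural isomorphism in degree zero forces agreement in all degrees, and that is exactly what you carry out. Two small remarks: your induction only uses naturality of $\theta^{n-1}$, so compatibility with arbitrary connecting morphisms is a bonus rather than what keeps the induction going. Also, the presentation step tacitly needs the kernel $K$ to lie in $\mod_R$, which is automatic for all modules or for Noetherian $R$ such as the incidence algebra $\kbb\Pcal$ used in the paper.
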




\begin{proposition}
    \label{proposition:mobius-homology-as-ext}
    Let $M \in \veck^\Pcal$, $i \in \Pcal$, and $k \in \Nbb$.
    We have a natural isomorphism
    \[
        (H_k^\downarrow M)(i)^* \cong \Ext^k_{\veck^\Pcal}(M, \Ssf_i),
    \]
    where $(-)^*$ denotes dualization of vector spaces.
\end{proposition}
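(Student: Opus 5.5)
The plan is to apply the universal property of $\Ext$ (\cref{theorem:ext-universal-property}) to the family of functors $Ex^n \coloneqq F_n = (H_n^\downarrow(-))(i)^*$. We take $R = \kbb\Pcal$ the incidence algebra, identified with $\veck^\Pcal$ via the standard equivalence $\veck^\Pcal \simeq \mod_{\kbb\Pcal}$, and $G = \Ssf_i$. Each $F_n$ is contravariant in $M$, since $H_n^\downarrow$ is covariant and dualization reverses arrows, so $F_n$ is a functor $\mod_R^{\op} \to \Ab$ as required. It remains to verify the three hypotheses of \cref{theorem:ext-universal-property}.

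\emph{Hypothesis (1).} This is exactly \cref{lemma:ses-les}. It gives, for each short exact sequence $0 \to L \to N \to M \to 0$, a long exact sequence
\[
0 \to F_0 M \to F_0 N \to F_0 L \to F_1 M \to \cdots,
\]
functorial in the short exact sequence. Its shape matches the long exact sequence of a contravariant cohomological functor, as in $\Ext^\bullet(-,G)$.

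\emph{Hypothesis (2).} A finitely generated free module over $\kbb\Pcal$ is $\kbb\Pcal^{\oplus m}$. Since $\kbb\Pcal \cong \bigoplus_{j \in \Pcal} \Psf_j$ as a module, it is a finite direct sum of indecomposable projectives $\Psf_j$. Each $F_n$ is additive on finite direct sums. This holds because the relative chain complex $C_\bullet(\Delta\Pcal_{\le i}, \Delta\Pcal_{<i}; -)$ is additive in the coefficients, and homology and dualization preserve finite sums. Then \cref{lemma:mobius-homology-acyclic} gives $F_n(\Psf_j) = 0$ for all $n > 0$ and all $j$, hence $F_n(F) = 0$ for every free $F$ with $n > 0$.

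\emph{Hypothesis (3).} This is \cref{lemma:mobius-homology-degree-zero}, which gives $F_0 M \cong \Hom(M, \Ssf_i)$ naturally in $M$.

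Combining the three, \cref{theorem:ext-universal-property} yields natural isomorphisms $\Ext^k(M, \Ssf_i) \cong F_k(M) = (H_k^\downarrow M)(i)^*$, which is the claim.

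The main subtlety, and the step I would check most carefully, is compatibility with the finitely generated setting. \cref{theorem:ext-universal-property} is stated for modules over a ring, while here we work with finite-dimensional modules over the finite-dimensional algebra $\kbb\Pcal$. Its proof only dimension-shifts along a short exact sequence $0 \to K \to F \to M \to 0$ with $F$ free. For finitely generated $M$ we may take $F$ finitely generated free, and then $K$ is again finite-dimensional, so the argument stays inside $\veck^\Pcal$ and only uses (1)–(3) on such modules. A second point to check is the naturality of the connecting morphisms in \cref{lemma:ses-les}. This follows from the functoriality of the long exact sequence in relative homology with coefficients, which the lemma already asserts.
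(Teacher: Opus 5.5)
Your proposal is correct and is essentially the paper's proof: the paper also applies the universal property of $\Ext$ (\cref{theorem:ext-universal-property}), with \cref{lemma:ses-les,lemma:mobius-homology-acyclic,lemma:mobius-homology-degree-zero} supplying hypotheses (1), (2), and (3) respectively. Your extra remarks fill in details the paper leaves implicit: additivity of $F_n$, which passes vanishing from the indecomposable projectives to all finitely generated free modules, and the observation that dimension shifting stays inside finite-dimensional modules.
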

\begin{proof}
    This follows from \cref{theorem:ext-universal-property}, using \cref{lemma:ses-les,lemma:mobius-homology-acyclic,lemma:mobius-homology-degree-zero}, to satisfy conditions (1), (2), and (3), respectively.
\end{proof}

\begin{remark}
    The dualization $(-)^*$ in \cref{proposition:mobius-homology-as-ext} is required to get the right variance.
    One can also use $\Tor$ and avoid dualization; in this case, one needs to observe that the simple $\Ssf_i$ is a $\kbb \Pcal$-bimodule.
\end{remark}

The following is standard; see, e.g., \cite[Lemma~5.20]{botnan-et-al}.

\begin{lemma}
    \label{remark:mobius-is-betti}
    If $M \in \veck^\Pcal$, $i \in \Pcal$, and $k \in \Nbb$, then
    $\dim(\Ext^k_{\veck^\Pcal}(M, \Ssf_i))$ is equal to the multiplicity of $\Psf_i$ in the $k$th homological degree of the minimal resolution of $M$.
    \qed
\end{lemma}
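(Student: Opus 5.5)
We prove the statement by computing $\Ext^k(M,\Ssf_i)$ directly from the minimal projective resolution $\cdots \to Q_1 \xrightarrow{d_1} Q_0 \to M \to 0$, which exists and is unique up to isomorphism by the equivalence $\veck^\Pcal \simeq \mod_{\kbb\Pcal}$. Applying $\Hom(-,\Ssf_i)$ gives the complex $\Hom(Q_0,\Ssf_i) \to \Hom(Q_1,\Ssf_i) \to \cdots$. Its cohomology in degree $k$ is $\Ext^k(M,\Ssf_i)$.

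The first step is to compute $\Hom(\Psf_j,\Ssf_i)$. By Yoneda, $\Hom(\Psf_j, N) \cong N(j)$, since $\Psf_j$ is the representable functor at $j$. Hence $\Hom(\Psf_j,\Ssf_i) \cong \Ssf_i(j)$, which is $\kbb$ if $j=i$ and $0$ otherwise. Writing $Q_k \cong \bigoplus_j \Psf_j^{\,m_k(j)}$, with $m_k(j)=\beta^k_M(j)$ by definition, we get $\dim \Hom(Q_k,\Ssf_i) = m_k(i)$.

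The key step, and the only real content, is to show that all differentials of this complex vanish. Minimality of the resolution means $\im(d_k) \subseteq \mathrm{rad}\, Q_{k-1}$, because each $Q_{k}\to \im d_k$ is a projective cover. Any morphism $Q_{k-1} \to \Ssf_i$ has semisimple target, so it kills $\mathrm{rad}\, Q_{k-1}$. Concretely, $\mathrm{rad}\,\Psf_j$ is the subrepresentation supported on $\Pcal_{>j}$, and a map $\Psf_j \to \Ssf_i$ is determined by the image of the generator at $j$, so it vanishes off $j$. It follows that precomposition with $d_k$ is zero.

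Therefore the cohomology equals the cochains themselves, and $\dim \Ext^k(M,\Ssf_i) = m_k(i)$, which is the claimed multiplicity. I expect the main point needing care to be the identification $\im d_k \subseteq \mathrm{rad}\, Q_{k-1}$ from minimality. I would justify it using the characterization of projective covers as maps that induce isomorphisms modulo the radical, as in \cite[Chapter~I]{auslander-reiten-smalo}: if $\im d_k$ were not contained in the radical, some summand of $Q_{k-1}$ would split off, contradicting minimality.
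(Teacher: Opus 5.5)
Your proof is correct and is the standard argument behind the fact the paper simply cites from \cite[Lemma~5.20]{botnan-et-al}: you apply $\Hom(-,\Ssf_i)$ to the minimal projective resolution, use $\Hom(\Psf_j,\Ssf_i)\cong\Ssf_i(j)$, and note that all differentials vanish because $\im d_k\subseteq\mathrm{rad}\,Q_{k-1}$ and $\Ssf_i$ is semisimple. One harmless indexing slip: the containment $\im d_k\subseteq\mathrm{rad}\,Q_{k-1}$ comes from $Q_{k-1}\to\im d_{k-1}$ (or $Q_0\to M$ when $k=1$) being a projective cover with superfluous kernel $\im d_k$, not from $Q_k\to\im d_k$ being one; since every map in a minimal resolution is a projective cover, nothing changes.
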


\begin{proof}[Proof of \cref{theorem:main-thm-1}]
    This follows from \cref{proposition:mobius-homology-as-ext,remark:mobius-is-betti}.
\end{proof}

\begin{notation}
    Let $\Lcal \coloneqq \bigoplus_{a < b \in \Pcal \cup \{\infty\}} \Lsf_{a,b}$ and let $\End(\Lcal)$ be its endomorphism algebra.
\end{notation}


\begin{definition}
    A sequence $0 \to L \to N \to M \to 0$ in $\veck^\Pcal$ is \define{rank-exact}
    if it is exact and additive on rank invariants, in the sense that $\rk(N) = \rk(L) + \rk(M)$.
\end{definition}

\begin{proposition}[{\cite{BotnanOppermannOudot2023}}]
    We have the following properties of the rank exact structure.
    \begin{itemize}
    \item A sequence $E : 0 \to L \to N \to M \to 0$ in $\veck^\Pcal$ is rank-exact if and only if $\Hom(\Lcal, E)$ is a short exact sequence of vector spaces.
    \item A module is rank-projective if and only if it is hook decomposable.\qed
    \end{itemize}
\end{proposition}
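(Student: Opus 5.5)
The plan is to reduce both statements to one computation: the functor $\Hom(\Lsf_{a,b},-)$ is the kernel functor $K_{(-)}(a,b)$. Everything else then follows from left exactness of $\Hom$, dimension counting, and Krull--Schmidt in $\veck^\Pcal$.

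\emph{Step 1: the key computation.} I will show there is an isomorphism
\[
\Hom(\Lsf_{a,b},M)\cong K_M(a,b)=\ker\big(M(a)\to M(b)\big),
\]
natural in $M$. By Yoneda, $\Hom(\Psf_a,M)\cong M(a)$. If $b=\infty$, then $\Lsf_{a,\infty}=\Psf_a$ and $K_M(a,\infty)=M(a)$, so the claim is Yoneda itself. If $b<\infty$, apply the left exact functor $\Hom(-,M)$ to $\Psf_b\to\Psf_a\to\Lsf_{a,b}\to 0$. This gives
\[
0\to\Hom(\Lsf_{a,b},M)\to M(a)\to M(b),
\]
and the last map is the structure map of $M$. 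Hence $\Hom(\Lcal,M)\cong\bigoplus_{a<b}K_M(a,b)$, naturally in $M$.

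\emph{Step 2: first bullet.} Let $E\colon 0\to L\to N\to M\to 0$ be a sequence of representations.
\begin{itemize}
\item Since $\Hom(\Lcal,-)$ is left exact, $\Hom(\Lcal,E)$ is short exact if and only if, for every $a<b\in\Pcal\cup\{\infty\}$, the sequence $0\to K_L(a,b)\to K_N(a,b)\to K_M(a,b)$ is left exact with
\[
\dim K_N(a,b)=\dim K_L(a,b)+\dim K_M(a,b).
\]
\item Taking $b=\infty$, the hook summands $\Psf_a$ contribute $\Hom(\Psf_a,E)=E(a)$. So exactness of $\Hom(\Lcal,E)$ contains pointwise exactness of $E$, i.e.\ exactness of $E$.
\item Given pointwise exactness, dimension vectors are additive. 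For $a<b<\infty$ we have $\dim K_X(a,b)=\dim X(a)-\rk(X)_{(a,b)}$, so additivity of kernel dimensions is equivalent to additivity of ranks for all $a<b$.
\item For $a=b$ the rank equals the dimension, which is already additive. Hence $\Hom(\Lcal,E)$ is exact if and only if $E$ is exact and $\rk(N)=\rk(L)+\rk(M)$, i.e.\ $E$ is rank-exact.
\end{itemize}

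\emph{Step 3: hook-decomposable implies rank-projective.} By Step 2, $\Hom(\Lsf_{a,b},-)$ is a direct summand of $\Hom(\Lcal,-)$. It therefore sends rank-exact sequences to exact sequences. So each hook, and each finite direct sum of hooks, is rank-projective.

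\emph{Step 4: rank-projective implies hook-decomposable.} Let $M$ be rank-projective.
\begin{itemize}
\item Build the "universal hook cover" $\pi\colon Q\coloneqq\bigoplus_{a<b}\Lsf_{a,b}\otimes\Hom(\Lsf_{a,b},M)\to M$. This is finite because all Hom spaces are finite dimensional.
\item By construction, $\Hom(\Lcal,\pi)$ is surjective. In particular $\pi$ is surjective, because the $b=\infty$ summands give surjectivity at each point $a$.
\item So $0\to\ker\pi\to Q\to M\to 0$ is exact, and $\Hom(\Lcal,-)$ of it is exact: it is left exact always, and right exact by surjectivity. By Step 2 the sequence is rank-exact.
\item Rank-projectivity of $M$ lifts $\mathrm{id}_M$ through $\pi$, so the sequence splits and $M$ is a direct summand of $Q$.
\item By Krull--Schmidt in $\veck^\Pcal\simeq\mod_{\kbb\Pcal}$, it then suffices that each hook is indecomposable.
\end{itemize}

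\emph{Expected obstacle.} The delicate point is this last indecomposability check. I would argue that $\Lsf_{a,b}$ is the spread representation on $\Pcal_{\geq a}\setminus\Pcal_{\geq b}$, or on $\Pcal_{\geq a}$ when $b=\infty$. Then it is generated by its value at $a$, so $\End(\Lsf_{a,b})\cong\Hom(\Lsf_{a,b},\Lsf_{a,b})\cong K_{\Lsf_{a,b}}(a,b)=\kbb$ by Step 1. A local endomorphism ring gives indecomposability. Beyond that, I only need to keep the naturality of the isomorphism in Step 1 in view; it is what makes the Hom-exactness translation in Step 2 valid as a statement about maps, not just dimensions.
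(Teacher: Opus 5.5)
Your proof is correct. The paper gives no argument of its own here; it imports the statement from \cite{BotnanOppermannOudot2023} and closes it with \qed.

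Your Step~1 is the same identification $\Hom(\Lsf_{a,b},M)\cong K_M(a,b)$ that the paper uses in \cref{lemma:interval-algebra-and-endo-algebra}(1). The first bullet then follows from the rank--nullity count you give.

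You part ways with the cited route on the second bullet. There it comes from a general theorem of relative homological algebra \cite{auslander-solberg}: for a finite family $\Ycal$ of modules containing the indecomposable projectives, the $\Hom(\Ycal,-)$-exact sequences form an exact structure whose relative projectives are exactly $\mathrm{add}\,\Ycal$. Your Steps~3 and~4 are the proof of that theorem, specialized to the hooks:
\begin{itemize}
\item The evaluation map $Q\to M$ is a right $\mathrm{add}\,\Lcal$-approximation.
\item It is surjective, and hence the end of a rank-exact sequence, because the projectives $\Psf_a=\Lsf_{a,\infty}$ belong to the family. You use this fact twice: for exactness of $E$ in Step~2, and for surjectivity of $\pi$ in Step~4.
\item Splitting plus Krull--Schmidt identifies the rank-projectives with $\mathrm{add}\,\Lcal$.
\item Your computation $\End(\Lsf_{a,b})\cong K_{\Lsf_{a,b}}(a,b)=\kbb$ is what upgrades ``summand of a sum of hooks'' to ``sum of hooks''.
\end{itemize}

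Your version is elementary and self-contained. It also does not need to know in advance that rank-exact sequences form an exact structure, since $\Hom(P,-)$-exactness makes sense for any class of sequences. The general route additionally shows that the rank-exact sequences do form an exact structure with enough projectives. That structure is the setting for relative $\Ext$, relative global dimension, and the minimal rank-projective resolutions defining $\rho^k_M$ in \cref{theorem:main-thm-2}. The general route also applies verbatim to other families, such as spread modules.
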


Since $\Lcal$ is a left $\End(\Lcal)$-module, the functor $\Hom(\Lcal, -)$ gives a functor
\[
    \Hom(\Lcal, -) : \veck^\Pcal \to \mod_{\End(\Lcal)}.
\]

\begin{lemma}
    \label{lemma:interval-algebra-and-endo-algebra}
    Define $e \in \kbb(\Int \Pcal)$ by $e \coloneqq \sum_{c \in \Pcal} [(c,c),(c,c)]$.
    We have an isomorphism of algebras
    \[
        \End(\Lcal) \cong \kbb(\Int \Pcal) / (e).
    \]
    Under this identification, we have the following
    \begin{enumerate}
    \item If $M \in \veck^\Pcal$, we have $\Hom(\Lcal,M) \cong K_M \otimes_{\kbb(\Int \Pcal)} \left(\kbb(\Int \Pcal) / (e)\right)$.
    \item If $a < b \in \Pcal \cup \{\infty\}$, then $\Hom(\Lcal, \Lsf_{a,b}) \cong \Psf_{(a,b)} \otimes_{\kbb(\Int \Pcal)} \left(\kbb(\Int \Pcal) / (e)\right)$, where $\Psf_{(a,b)} \in \veck^{\Int \Pcal}$ is the corresponding indecomposable projective.
    \item If $a \in \Pcal \cup \{\infty\}$, then $\Psf_{(a,a)} \otimes_{\kbb(\Int \Pcal)} \left(\kbb(\Int \Pcal) / (e)\right) = 0$.
    \end{enumerate}
\end{lemma}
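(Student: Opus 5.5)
The plan is to compute $\End(\Lcal)$ explicitly on both sides, using a basis indexed by comparable pairs of elements of $\Int\Pcal$.

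\emph{Hom out of a hook.} By the universal property of the cokernel together with Yoneda ($\Hom(\Psf_c,M)\cong M(c)$), for every $M\in\veck^\Pcal$ and $c<d$ we get
\[
  \Hom(\Lsf_{c,d},M)\;\cong\;\ker\big(M(c)\to M(d)\big)\;=\;K_M(c,d),
\]
naturally in $M$. This also holds for $d=\infty$, by the convention $M(\infty)=0$. Hence $\Hom(\Lcal,M)\cong\bigoplus_{c<d}K_M(c,d)$.

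\emph{Hom between hooks.} Taking $M=\Lsf_{a,b}$, note that $\Lsf_{a,b}(x)=\kbb$ exactly when $a\le x$ and $b\not\le x$, and is $0$ otherwise. So $K_{\Lsf_{a,b}}(c,d)$ is $\kbb$ if $a\le c$, $b\le d$ and $b\not\le c$, and is $0$ otherwise. Thus $\Hom(\Lsf_{c,d},\Lsf_{a,b})$ is one-dimensional precisely when $(a,b)\le(c,d)$ in $\Int\Pcal$ and $b\not\le c$. In that case it is spanned by the canonical map induced by $\Psf_c\to\Psf_a$.

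\emph{Basis on the incidence-algebra side.} In $\kbb(\Int\Pcal)$ the basis vectors $[(a,b),(c,d)]$ are indexed by pairs $(a,b)\le(c,d)$. The ideal $(e)$ is spanned by those basis vectors that factor through some diagonal element $(x,x)$. Such an $x$ with $(a,b)\le(x,x)\le(c,d)$ exists iff $b\le c$ (take $x=b$ or $x=c$). So $\kbb(\Int\Pcal)/(e)$ has basis the $[(a,b),(c,d)]$ with $(a,b)\le(c,d)$ and $b\not\le c$. This is exactly the indexing set found above for $\End(\Lcal)$.

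\emph{Multiplication.} I send $[(a,b),(c,d)]$ to the canonical map $\Lsf_{c,d}\to\Lsf_{a,b}$, possibly up to an $\op$ depending on the module-side convention. It remains to check multiplicativity. A composite of two canonical maps is again induced by the composite of the maps of projectives, so it is the canonical map. That map is zero exactly when its source and target indices satisfy $b\le c$, i.e.\ exactly when the product lands in $(e)$. Products of non-composable basis elements vanish on both sides. I expect this bookkeeping to be the main obstacle: matching left/right and $\op$ conventions, and handling the pair $(\infty,\infty)$. For the latter I would either include it among the diagonal idempotents or observe that $K_M(\infty,\infty)=0$ and that no hook maps through it, so the statements are unaffected.

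\emph{The three items.} For (1), $K_M\otimes_{\kbb(\Int\Pcal)}\kbb(\Int\Pcal)/(e)$ is $K_M$ modulo the part supported on, or factoring through, diagonal vertices. Since $K_M(x,x)=0$, this is just $K_M$ restricted to off-diagonal pairs, viewed as a module over the quotient. This agrees with $\bigoplus_{c<d}K_M(c,d)$, and the actions agree because the identification above is natural in $M$. Item (2) follows since $\Lsf_{a,b}$ is the summand of $\Lcal$ cut out by the idempotent at $(a,b)$. Hence $\Hom(\Lcal,\Lsf_{a,b})$ is the corresponding indecomposable projective of the quotient algebra, which is $\Psf_{(a,b)}\otimes\kbb(\Int\Pcal)/(e)$. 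Alternatively, (2) is (1) applied to $M=\Lsf_{a,b}$, together with the kernel computation above. For (3), the idempotent at $(a,a)$ lies in $(e)$, so $\Psf_{(a,a)}\otimes\kbb(\Int\Pcal)/(e)=0$.
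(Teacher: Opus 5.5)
Your proposal is correct. For items (1)--(3) it matches the paper: the paper's proof of (1) rests on the same identification $\Hom(\Lsf_{a,b},M)\cong K_M(a,b)$, and it leaves (2) and (3) as a straightforward check, which your idempotent arguments supply. The difference is the algebra isomorphism itself. The paper does not prove it but imports it from \cite[Lemma~5.10]{botnan-et-al}. Your basis computation makes the argument self-contained, and it shows exactly why one quotients by $(e)$: the canonical map $\Lsf_{c,d}\to\Lsf_{a,b}$ vanishes precisely when $b\le c$, which is precisely when $[(a,b),(c,d)]$ factors through a diagonal element. The bookkeeping you worried about is harmless. View covariant functors as right modules, with $x\cdot[i,j]=\varphi_{i\le j}(x)$ for $x\in M(i)$ (the convention used in the proof of \cref{lemma:preservation-betti-tables-outside-diagonal}), and take composition as the product in $\End(\Lcal)$. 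Then your assignment $[(a,b),(c,d)]\mapsto(\Lsf_{c,d}\to\Lsf_{a,b})$ is multiplicative without any $\op$.

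Two points need tightening.

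First, of your two options for $(\infty,\infty)$, only the first works. The pair $(\infty,\infty)$ does belong to $\Int\Pcal$. If $e$ is summed only over $c\in\Pcal$, as literally written, then $[(\infty,\infty),(\infty,\infty)]$ and $[(a,\infty),(\infty,\infty)]$ factor through no $(x,x)$ with $x\in\Pcal$; your choice $x=b$ or $x=c$ breaks down exactly when $b=c=\infty$. These vectors survive in the quotient, which is then strictly larger than $\End(\Lcal)$, and item (3) fails for $a=\infty$. So $e$ must be read as a sum over $\Pcal\cup\{\infty\}$. This is also what the paper tacitly uses when it asserts $(e)=\Span\{[(a,b),(c,d)] : b\le c\}$ in the proof of \cref{lemma:preservation-betti-tables-outside-diagonal}.

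Second, in (1) the agreement of the two actions does not follow from naturality in $M$, which only gives compatibility with morphisms $M\to M'$. What you need is naturality of the Yoneda isomorphism in the representing object. Precomposing $f\colon\Lsf_{a,b}\to M$ with the canonical map $\Lsf_{c,d}\to\Lsf_{a,b}$ (induced by $\Psf_c\hookrightarrow\Psf_a$) corresponds to applying $M(a\le c)$ to the element of $K_M(a,b)$ determined by $f$. That is exactly the structure map $K_M(a,b)\to K_M(c,d)$.
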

\begin{proof}
    The isomorphism of algebras is proven in \cite[Lemma~5.10]{botnan-et-al}
    (note that in \cite[Section~5.1]{botnan-et-al} the poset is assumed to be a lattice, but this extra structure is not used in Lemma~5.10).

    Statement (1) is due to the fact that $K_M(a,b) = \ker(M(a) \to M(b)) \cong \hom(\Lsf_{a,b},M)$, and statements (2) and (3) are a straightforward check.
\end{proof}

\begin{proposition}[{cf.~\cite[Lemma~5.6]{botnan-et-al}}]
    \label{lemma:preservation-betti-tables-outside-diagonal}
    Let $N \in \veck^{\Int \Pcal}$ be such that, for every $a \leq b \leq c \leq d \in \Pcal \cup \{\infty\}$ the structure morphism $N(a,b) \to N(c,d)$ is zero.
    For every $d > 0 \in \Nbb$, we have
    \[
        \Tor_d^{\kbb(\Int \Pcal)}
        \left(
        N, \kbb(\Int \Pcal) / (e)
        \right) = 0.
    \]
\end{proposition}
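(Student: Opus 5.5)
Write $\Lambda \coloneqq \kbb(\Int\Pcal)$ and $I \coloneqq \Lambda e \Lambda$, the two-sided ideal generated by $e$, so that $\kbb(\Int\Pcal)/(e) = \Lambda/I$. The plan is to use the short exact sequence of $\Lambda$-modules $0 \to I \to \Lambda \to \Lambda/I \to 0$ and its long exact $\Tor$-sequence after applying $N \otimes_\Lambda -$. Since $\Lambda$ is flat, this gives
\[
\Tor_d^\Lambda(N,\Lambda/I) \cong \Tor_{d-1}^\Lambda(N,I) \quad (d\ge 2),
\qquad
\Tor_1^\Lambda(N,\Lambda/I) \cong \ker\big(N\otimes_\Lambda I \to N\big).
\]
It therefore suffices to show that $N\otimes_\Lambda I = 0$ and $\Tor_j^\Lambda(N,I)=0$ for all $j\ge 1$.

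The first input from the hypothesis is $Ne = \bigoplus_{c} N(c,c) = 0$. Taking $a=b=c=d$ in the hypothesis shows that the identity of $N(c,c)$ is zero. For $c = \infty$ this also holds directly, since $K_M(\infty,\infty)=0$; in any case it follows from the hypothesis as stated. More generally, the hypothesis says that every structure map that factors through a diagonal vertex vanishes, but only $Ne=0$ should be needed below.

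The key step is to show that $I$ is a \emph{stratifying} ideal. Concretely, I want:
\begin{enumerate}
\item the multiplication map $\Lambda e \otimes_{e\Lambda e} e\Lambda \to \Lambda e\Lambda$ is an isomorphism;
\item $\Tor^{e\Lambda e}_{>0}(\Lambda e, e\Lambda) = 0$.
\end{enumerate}
Here $e\Lambda e$ is the incidence algebra of the diagonal $\{(c,c)\}\cong \Pcal\cup\{\infty\}$. Fix a vertex $x=(a,b)$ of $\Int\Pcal$. The $e\Lambda e$-module $e\Lambda[x]$ (or $\Lambda e[x]$, depending on the side) is spanned by the arrows $[(c,c),(a,b)]$, which exist exactly when $c \le a$. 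Hence, as a representation of the diagonal, it is the indicator module of the principal downset of $(a,a)$. This is a representable, hence projective, module, which gives (2). For (1), every arrow $x\to y$ of $\Int\Pcal$ passing through some diagonal vertex factors canonically through a single extremal diagonal vertex; dually, for the other side one uses $(b,b)$. Using this, I would check (1) vertex by vertex by comparing dimensions.

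Granting (1) and (2), standard adjunction and base change give
\[
\Tor_j^\Lambda(N, I) \cong \Tor_j^\Lambda(N, \Lambda e\otimes_{e\Lambda e} e\Lambda) \cong \Tor_j^{e\Lambda e}(N e, e\Lambda).
\]
The last step uses that $\Lambda e$ is $\Lambda$-projective, so $N\otimes_\Lambda \Lambda e = Ne$, together with the vanishing in (2). Since $Ne=0$, all these groups vanish, including $j=0$. This yields the claim for every $d>0$.

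I expect the main obstacle to be the careful verification of (1): the multiplication isomorphism at each vertex, keeping the left/right conventions and the point $\infty$ straight. If that becomes messy, the fallback is a direct approach. Apply $N\otimes_\Lambda-$ to an explicit projective resolution of the simple-type module $\Lambda/I$. Its terms restricted away from the diagonal are built from $e\Lambda$-type summands. Then show the resulting complex is exact in positive degrees, using that every term involving diagonal vertices is killed by $Ne=0$.
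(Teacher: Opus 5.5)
Your proof is correct. After the common first step (the long exact $\Tor$ sequence of $0\to(e)\to\Lambda\to\Lambda/(e)\to 0$, reducing to $\Tor^\Lambda_j(N,(e))=0$ for all $j\ge 0$), it takes a different route from the paper.

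The paper shows directly that $(e)$ is a projective left $\Lambda$-module. It writes $(e)=\bigoplus_{c\le d}\Span\{[(a,b),(c,d)] : b\le c\}$ and identifies each summand with $\Psf_{(c,c)}=\Lambda e_{(c,c)}$ (where $e_{(c,c)}=[(c,c),(c,c)]$) via right multiplication by $[(c,c),(c,d)]$. This kills $\Tor_{>0}$ for every $N$, and degree zero is an elementary-tensor computation.

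You instead treat $(e)$ as a stratifying ideal and base-change to the corner algebra $e\Lambda e$. Your two verifications are sound. First, $e\Lambda\,[(a,b),(a,b)]$ is the representable left $e\Lambda e$-module at $(a,a)$. Second, the multiplication map onto $\Lambda e\Lambda$ is surjective by definition and injective by a dimension count, since every arrow of $(e)$ leaving $(a,b)$ factors through the minimal diagonal vertex $(b,b)$. These two facts already give $(e)\cong\Lambda e\otimes_{e\Lambda e}e\Lambda\cong\bigoplus_{(a,b)}\Lambda e_{(a,a)}$, which is exactly the paper's decomposition. So the passage through $\Tor^{e\Lambda e}_j(Ne,e\Lambda)$ is not needed.

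What your framing buys is conceptual clarity. It isolates that the hypothesis is just $Ne=0$, i.e.\ that $N$ is a $\Lambda/(e)$-module, so the proposition amounts to $\Lambda\to\Lambda/(e)$ being a homological epimorphism. It also makes the degree-zero step transparent via $N\otimes_\Lambda(e)\cong Ne\otimes_{e\Lambda e}e\Lambda$.

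This is cleaner than the paper's written degree-zero step. That step moves $[(a,b),(c,d)]$ across the tensor sign, leaving $[(c,d),(c,d)]$, which is an element of $(e)$ only when $c=d$. The legitimate rewriting is $x[(a,b),(c,c)]\otimes[(c,c),(c,d)]$, which vanishes because $N(c,c)=0$ (the case $d=c$ of the hypothesis).
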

\begin{proof}
    We have an exact sequence of left $\kbb(\Int \Pcal)$-modules
    \[
        0 \to (e) \to \kbb(\Int \Pcal) \to \kbb(\Int \Pcal) / (e) \to 0.
    \]
    This implies that, for all $d \geq 2$, we have
    \[
        \Tor_d^{\kbb(\Int \Pcal)}
        \left(
        N, \kbb(\Int \Pcal) / (e)
        \right)
        \cong \Tor^{\kbb(\Int \Pcal)}_{d-1}\left(N, (e)\right),
    \]
    and that, in the case $d=1$, we have an injection of the left-hand side into the right-hand side.
    Thus, it suffices to show that
    $\Tor^{\kbb(\Int \Pcal)}_{d}\left(N, (e)\right) = 0$ for all $d \geq 0$.
    We first make an observation, and then consider the cases $d = 0$ and $d > 0$ separately.

    \medskip

    \noindent \emph{Observation.}
    We have $(e) = \Span \{ [(a,b), (c,d)] : b \leq c \}$, where the $\Span$ is as a vector space.

    \medskip

    \noindent \emph{Case $d=0$.}
    We have $\Tor_0(N, (e)) \cong N \otimes_{\kbb(\Int \Pcal)} (e)$, and we show that all elementary tensors vanish.
    Let $x \in N$ and let $[(a,b), (c,d)] \in (e)$.
    Then
    \[
        x \otimes [(a,b),(c,d)] = x [(a,b),(c,d)] \otimes [(c,d),(c,d)] = 0 \otimes [(c,d),(c,d)] = 0,
    \]
    since $[(a,b),(c,d)] = [(a,b),(c,d)] \, [(c,d),(c,d)]$ and the structure morphism $N(a,b) \to N(c,d)$ is zero, by assumption.

    \medskip

    \noindent \emph{Case $d>0$.}
    It is enough to prove that $(e)$ is projective as a left $\kbb(\Int \Pcal)$-module.
    As a left module, we have $(e) \cong \bigoplus_{c \leq d} \Span\{[(a,b), (c,d)] : b \leq c\}$, where the $\Span$ is as left module, so it is sufficient to prove that $\Span\{[(a,b), (c,d)] : b \leq c\}$ is projective for fixed $c \leq d$.
    We claim that $\Span\{[(a,b), (c,d)] : b \leq c\} \cong \Psf_{(c,c)}$, as left modules over $\kbb(\Int \Pcal)$.
    To see this, note that $\Psf_{(c,c)} \cong \Span\{[(a,b),(c,c)] : b \leq c\}$, by definition,
    and that there is an isomorphism
    \[
        \Span\{[(a,b),(c,c)] : b \leq c\} \cong
        \Span\{[(a,b),(c,d)] : b \leq c\}
    \]
    given by multiplying by $[(c,c),(c,d)]$ on the right.
\end{proof}

The following is a general fact \cite[Chapter~II,~Section~2]{auslander-reiten-smalo} instantiated to the hook modules.

\begin{lemma}[{cf.~\cite[Lemma~5.8]{botnan-et-al}}]
    \label{lemma:relative-to-absolute}
    The functor $\Hom(\Lcal, -): \veck^\Pcal \to \mod_{\End(\Lcal)}$ restricts to an equivalence of categories between the full subcategory of $\veck^\Pcal$ spanned by hook decomposable modules and the full subcategory of $\mod_{\End(\Lcal)}$ spanned by projective modules.
\end{lemma}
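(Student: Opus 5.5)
This is the projectivization principle of \cite[Chapter~II, Section~2]{auslander-reiten-smalo}, applied to the additive generator $\Lcal$. The plan is to show three things: the functor $\Hom(\Lcal,-)$ sends hook-decomposable modules to projective $\End(\Lcal)$-modules, it is fully faithful on them, and every finitely generated projective $\End(\Lcal)$-module lies in its essential image.

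\emph{Projectives in the image.} Write $\Lcal = \bigoplus_{a<b} \Lsf_{a,b}$ and let $e_{a,b} \in \End(\Lcal)$ be the idempotent given by projection onto the summand $\Lsf_{a,b}$ followed by inclusion. Then
\[
\Hom(\Lcal, \Lsf_{a,b}) \cong \Hom(\Lcal,\Lcal)\, e_{a,b} = \End(\Lcal)\, e_{a,b},
\]
which is a direct summand of the regular module, hence projective. Here I use the left-module convention matching the paper's statement that $\Lcal$ is a left $\End(\Lcal)$-module, so the orientation of the multiplication by $e_{a,b}$ may need to be adjusted. By additivity, $\Hom(\Lcal, -)$ sends every finite hook-decomposable module to a finitely generated projective module.

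\emph{Fully faithful.} For hook-decomposable modules $X$ and $Y$, I will show that the map
\[
\Hom_{\veck^\Pcal}(X,Y) \to \Hom_{\End(\Lcal)}\big(\Hom(\Lcal,X), \Hom(\Lcal,Y)\big)
\]
is bijective. Both sides are additive in $X$ and $Y$, so it suffices to take $X = \Lsf_{a,b}$ and $Y = \Lsf_{c,d}$. By Yoneda for idempotents, $\Hom_{\End(\Lcal)}(\End(\Lcal) e_{a,b}, \End(\Lcal) e_{c,d}) \cong e_{a,b}\End(\Lcal) e_{c,d}$, and this space identifies with $\Hom(\Lsf_{a,b},\Lsf_{c,d})$. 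The remaining work is bookkeeping: checking that the composite of these identifications is the map induced by the functor, with compatible variance conventions. I expect this to be the only delicate point.

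\emph{Essential surjectivity.} Each indecomposable projective over the finite-dimensional algebra $\End(\Lcal)$ is of the form $\End(\Lcal) f$ for a primitive idempotent $f$, and every such module is isomorphic to some $\End(\Lcal)e_{a,b}$. This holds because each $\Lsf_{a,b}$ has local endomorphism ring: it is a quotient of $\Psf_a$ with $\End(\Lsf_{a,b}) \cong \kbb$. Consequently $1 = \sum e_{a,b}$ is a decomposition into primitive orthogonal idempotents, and by Krull--Schmidt every finitely generated projective is a finite sum of the modules $\End(\Lcal)e_{a,b}$, each of which is $\Hom(\Lcal,\Lsf_{a,b})$.

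Combining the three steps gives the claimed equivalence. One further check is needed: the full subcategory of projective modules is closed under isomorphism, so essential surjectivity up to isomorphism is what is required.
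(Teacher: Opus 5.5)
Your proposal is correct and takes the paper's route. The paper gives no separate argument beyond invoking the projectivization principle of \cite[Chapter~II, Section~2]{auslander-reiten-smalo} for the additive generator $\Lcal$ of the hook-decomposable modules, and your three steps (idempotent summands of the regular module, full faithfulness via homomorphisms between idempotent summands, essential surjectivity from $\End(\Lsf_{a,b})\cong\kbb$ and Krull--Schmidt) unpack exactly that fact.
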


In particular, rank exact resolutions of $M \in \veck^\Pcal$ are in natural bijection with $\mod_{\End(\Lcal)}$-projective resolutions of $\Hom(\Lcal,M) \in \mod_{\End(\Lcal)}$.

\begin{proof}[Proof of \cref{theorem:main-thm-2}]
    By \cref{lemma:relative-to-absolute},
    it is enough to prove that $\dim(H_k^\downarrow K_M)(a,b)$ equals the multiplicity of $\Hom(\Lcal, \Lsf_{a,b})$
    in homological degree $k$ in the minimal $\mod_{\End(\Lcal)}$-projective resolution of $\Hom(\Lcal, M)$.
    By \cref{theorem:main-thm-1}, we have that $\dim(H_k^\downarrow K_M)(a,b)$ equals the multiplicity of $\Psf_{(a,b)} \in \veck^{\Int \Pcal}$ in homological degree $k$ in the minimal projective resolution of $K_M$.
    So we are left with showing that
    the multiplicity of $\Hom(\Lcal, \Lsf_{a,b})$
    in homological degree $k$ in the minimal $\mod_{\End(\Lcal)}$-projective resolution of $\Hom(\Lcal, M)$ equals the multiplicity of $\Psf_{(a,b)} \in \veck^{\Int \Pcal}$ in homological degree $k$ in the minimal projective resolution of $K_M$.

    By \cref{lemma:interval-algebra-and-endo-algebra}(1), we have $\Hom(\Lcal,M) \cong K_M \otimes_{\kbb(\Int \Pcal)} \left(\kbb(\Int \Pcal) / (e)\right)$.
    Tensoring with
    $\kbb(\Int \Pcal) / (e)$ preserves any projective resolution of $K_M$, by \cref{lemma:preservation-betti-tables-outside-diagonal}.
    Using the minimal projective resolution of $K_M$, we get, after tensoring, a resolution by projective $\End(\Lcal)$-modules in which, by \cref{lemma:interval-algebra-and-endo-algebra}(2) and (3), the indecomposable projective $\Psf_{(a,b)}$ is sent to $\Hom(\Lcal, \Lsf_{a,b})$ when $a < b$ and to $0$ when $a = b$; in particular only the projectives corresponding to pairs $(a,b)$ with $a<b$ are kept.
    This resolution is again minimal: the projectives $\Hom(\Lcal, \Lsf_{a,b})$ with $a < b$ are pairwise non-isomorphic indecomposables, so tensoring sends radical morphisms between projectives to radical morphisms, and hence the differentials of the minimal resolution of $K_M$ remain in the radical.
    The result follows.
\end{proof}

\subsection{No-Go Stability for Additive Invariants}
\label{section:no-go-result}

If $A$ is a commutative monoid, $d_A$ is an extended pseudometric on $A$, and $p \in [1,\infty]$, we say that $d_A$ is \define{$p$-subadditive} if $d_A(a+b,c+d) \leq \|(d_A(a,c), d_A(b,d))\|_p$ for all $a,b,c,d \in A$.
For example, the interleaving distance on (the set of isomorphism classes of objects of) $\Veck^{\R^n}$ is $\infty$-subadditive.

Recall that the \define{length} of a continuous path $\gamma : [0,L] \to A$, where $A$ carries the topology induced by $d_A$, is
\[
    L(\gamma) \coloneqq \sup_{0 = t_0 < t_1 < \cdots < t_n = L} \; \sum_{i=0}^{n-1} d_A(\gamma(t_i), \gamma(t_{i+1})) \;\in\; [0,\infty].
\]
We say that $d_A$ is \define{intrinsic} if $d_A(a,b) = \inf_\gamma L(\gamma)$ for all $a,b \in A$, the infimum being taken over all continuous paths $\gamma$ from $a$ to $b$.

\begin{theorem}
    \label{theorem:main-no-go}
    Let $A$ be a commutative monoid, and let $d_A$ be an extended pseudometric on $A$ that is intrinsic and $p$-subadditive for some $p \in (1,\infty]$.
    Let $G$ be a commutative monoid equipped with an extended pseudometric $d_G$, and with a function $-: G \to G$ such that $d_G(c + (-c), 0) = 0$ for all $c \in G$.
    Assume that addition $+ : G \times G \to G$ is uniformly continuous, where $G \times G$ carries the metric $d_{G \times G}\big((u,c),(v,c')\big) = \max\big(d_G(u,v), d_G(c,c')\big)$.
    If $f : A \to G$ is additive and uniformly continuous, then $d_G(f(a), f(b)) = 0$ for all $a, b \in A$ with $d_A(a,b) < \infty$.
\end{theorem}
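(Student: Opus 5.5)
The plan is a telescoping argument. Using intrinsicness, I will join $a$ to $b$ by a chain of many small steps. Using $p$-subadditivity with $p>1$, I will then show that two large ``shifted'' sums along this chain are close in $d_A$, even though the chain has many steps. Additivity of $f$, together with approximate negatives in $G$, will transfer this closeness to $f(a)$ and $f(b)$.

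\emph{Step 1 (fine chain).} Fix $a,b$ with $d_A(a,b)<\infty$ and fix $\eta>0$. Since $d_A$ is intrinsic, there is a continuous path $\gamma\colon[0,L]\to A$ from $a$ to $b$ with $L(\gamma)\le d_A(a,b)+\eta=:\Lambda<\infty$. Given $\epsilon>0$, uniform continuity of $\gamma$ on the compact interval $[0,L]$ gives a partition $0=t_0<\dots<t_N=L$ with $\delta_i:=d_A(x_i,x_{i+1})\le\epsilon$ for all $i$, where $x_i=\gamma(t_i)$. By the definition of length, $\sum_i\delta_i\le\Lambda$. Therefore
\[
\textstyle\|(\delta_i)_i\|_p^p\le \epsilon^{p-1}\sum_i\delta_i\le\epsilon^{p-1}\Lambda ,
\]
which tends to $0$ as $\epsilon\to0$ because $p>1$. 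For $p=\infty$ the norm is simply $\max_i\delta_i\le\epsilon$. This choice avoids any arc-length reparametrization.

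\emph{Step 2 (shifted sums).} Set $S=x_0+\dots+x_{N-1}$ and $T=x_1+\dots+x_N$. Iterating $p$-subadditivity, which is legitimate because the $\ell^p$-norm is associative under nesting, gives $d_A(S,T)\le\|(\delta_i)_i\|_p$. This bound can be made arbitrarily small, independently of how large $N$ becomes. Moreover $S+b=T+a$, since both equal $x_0+\dots+x_N$. Additivity of $f$ then yields $f(S)+f(b)=f(T)+f(a)$, and uniform continuity of $f$ makes $d_G(f(S),f(T))$ as small as we like.

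\emph{Step 3 (cancellation in $G$).} Write $c=f(T)$ and $c'=f(S)$. Let $\omega$ be a modulus of uniform continuity for addition. Since $d_G(c+(-c),0)=0$, uniform continuity of addition gives $d_G\big(f(a)+(c+(-c)),\,f(a)\big)=0$, because the distance is at most $\omega(\text{any }\epsilon')$ for every $\epsilon'>0$. The same holds with $f(b)$ in place of $f(a)$. By the triangle inequality for the pseudometric $d_G$,
\[
d_G(f(a),f(b))\le d_G\big(f(a)+c+(-c),\,f(b)+c'+(-c)\big)+d_G\big(f(b)+c'+(-c),\,f(b)+c+(-c)\big).
\]
The first term is $0$, since $f(a)+c=f(b)+c'$. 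The second term is bounded by the modulus of addition applied to $d_G(c',c)$, and this does not depend on $N$. Letting $\epsilon\to0$ gives $d_G(f(a),f(b))=0$.

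The main obstacle is Step 1--2: obtaining a bound on $d_A(S,T)$ that does not blow up with the number of summands. This is exactly where $p>1$ enters, via $\sum\delta_i^p\le(\max_i\delta_i)^{p-1}\sum_i\delta_i$. Everything else uses only that the relevant moduli of continuity are independent of $N$.
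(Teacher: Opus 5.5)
Your proposal is correct and follows the paper's proof: a fine chain along a finite-length path, iterated $p$-subadditivity to make the two shifted telescoping sums $d_A$-close (with $p>1$ keeping the bound small however many steps there are), additivity of $f$, and cancellation in $G$ via the approximate negatives and uniform continuity of addition. The differences are only technical: you get small steps from uniform continuity of the path together with $\sum_i\delta_i^p\le\epsilon^{p-1}\sum_i\delta_i$ instead of an arc-length reparametrization, and you cancel $f(T)$ using the exact identity $S+b=T+a$ where the paper cancels the common middle sum $f(P_k)$.
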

\begin{proof}
    Fix $a, b \in A$ with $d_A(a,b) < \infty$, and let $\omega$ be a modulus of uniform continuity for $f$, so that $d_G(f(x), f(y)) \leq \omega(d_A(x,y))$ for all $x,y$, where $\omega(0) = 0$ and $\omega(t) \to 0$ as $t \to 0^+$.
    Applying uniform continuity of addition with equal second arguments yields a function $\eta : [0,\infty] \to [0,\infty]$ with $\eta(0) = 0$ and $\eta(t) \to 0$ as $t \to 0^+$ such that
    \begin{equation}
        \label{dagger}
        d_G(u + c, \, v + c) \leq \eta(d_G(u,v)) \qquad \text{for all } u, v, c \in G.
    \end{equation}

    Since $d_A$ is intrinsic, for every $k \in \Nbb$ there is a continuous path from $a$ to $b$ of length at most $d_A(a,b) + 1/k$; parametrizing it by arc length gives a path $\gamma^k : [0, L_k] \to A$ with $L_k \leq d_A(a,b) + 1/k$ and $d_A(\gamma^k(s), \gamma^k(s')) \leq |s - s'|$.
    Pick an integer $n_k \geq 1$ with $L_k / n_k^{1 - 1/p} < 1/k$, which is possible since $1 - 1/p > 0$ for $p > 1$ (for $p = \infty$, pick instead $n_k$ with $L_k / n_k < 1/k$).
    Setting $x_i^k \coloneqq \gamma^k(i L_k / n_k)$ for $0 \leq i \leq n_k$, we have $x_0^k = a$, $x_{n_k}^k = b$, and $d_A(x_i^k, x_{i+1}^k) \leq L_k / n_k$, so
    \begin{equation}
        \label{eq:chain-bound}
        \big\| \big(d_A(x_i^k, x_{i+1}^k)\big)_{0 \leq i < n_k} \big\|_p \leq \big(n_k \, (L_k / n_k)^p\big)^{1/p} = \frac{L_k}{n_k^{1 - 1/p}} < \frac{1}{k}
    \end{equation}
    (with the evident modification for $p = \infty$).

    By induction on $n$, $p$-subadditivity gives $d_A\big(\textstyle\sum_{i=0}^{n-1} a_i, \sum_{i=0}^{n-1} c_i\big) \leq \big\|(d_A(a_i, c_i))_{i=0}^{n-1}\big\|_p$ for all $a_i, c_i \in A$.
    Applying this with $a_i = x_i^k$ and $c_i = x_{i+1}^k$, and writing $P_k \coloneqq x_1^k + x_2^k + \cdots + x_{n_k - 1}^k$ (the empty sum when $n_k = 1$ being $0$), commutativity of $A$ gives $\sum_{i=0}^{n_k - 1} x_i^k = a + P_k$ and $\sum_{i=0}^{n_k - 1} x_{i+1}^k = b + P_k$, so by \cref{eq:chain-bound},
    \begin{equation}
        \label{eq:A-bound}
        d_A(a + P_k, \, b + P_k) < \frac{1}{k}.
    \end{equation}

    By additivity and uniform continuity of $f$, \cref{eq:A-bound} gives $d_G\big(f(a) + f(P_k), \, f(b) + f(P_k)\big) \leq \omega(1/k)$.
    Applying \cref{dagger} with $c = -f(P_k)$,
    \begin{equation}
        \label{eq:G-bound}
        d_G\big(f(a) + f(P_k) + (-f(P_k)), \; f(b) + f(P_k) + (-f(P_k))\big) \leq \eta(\omega(1/k)).
    \end{equation}
    On the other hand, $d_G\big(0, \, f(P_k) + (-f(P_k))\big) = 0$ by the cancellation property and symmetry, so applying \cref{dagger} with $c = f(a)$, and using that $0$ is the identity of $G$,
    \[
        d_G\big(f(a), \; f(a) + f(P_k) + (-f(P_k))\big) \leq \eta(0) = 0,
    \]
    and likewise with $b$ in place of $a$.
    Combining these two equalities with \cref{eq:G-bound} through the triangle inequality and symmetry of $d_G$ yields $d_G(f(a), f(b)) \leq \eta(\omega(1/k))$.
    Letting $k \to \infty$, we have $\omega(1/k) \to 0$ and hence $\eta(\omega(1/k)) \to 0$, so $d_G(f(a), f(b)) = 0$.
\end{proof}

For a related argument, see \cite[Example~4.7(3)]{bubenik-elchesen}.

\begin{lemma}
    \label{lemma:subcategory-intrinsic}
    Let $\Acal \subseteq \Veck^{\R^n}$ be a subcategory that is closed under shifts (i.e., $M \in \Acal$ implies $M[\epsilon] \in \Acal$ for every $\epsilon \in \Rbb$), finite sums, and pullbacks.
    Then the interleaving distance restricted to $\Acal$ is intrinsic.
\end{lemma}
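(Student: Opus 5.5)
The plan is to build, from an $\epsilon$-interleaving between $M,N\in\Acal$, an explicit path $t\mapsto M_t$ in $\Acal$ with $M_0\cong M$ and $M_\epsilon\cong N$, such that $M_s$ and $M_t$ are $|s-t|$-interleaved for all $s,t$. Such a path is continuous for $d_I$ and has length at most $\epsilon$. Taking $\epsilon\downarrow d_I(M,N)$ then gives $\inf_\gamma L(\gamma)\le d_I(M,N)$. The reverse inequality $d_I(M,N)\le L(\gamma)$ holds for every path by the triangle inequality, since interleavings compose. If $d_I(M,N)=\infty$ there is nothing to prove. So the whole content is an interpolation lemma in the spirit of \cite{bubenik-silva-scott}, and the hypotheses on $\Acal$ are exactly what is needed to keep the interpolants inside $\Acal$.

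\emph{Construction.} Fix an $\epsilon$-interleaving $f\colon M\to N[\epsilon]$, $g\colon N\to M[\epsilon]$, and for $t\in[0,\epsilon]$ consider the cospan
\[
M[-t]\xrightarrow{\;f[-t]\;}N[\epsilon-t]\xleftarrow{\;\eta^{N[-t]}_{t}\;}N[-t]\,[t]\cong N[\epsilon-t]\ldots
\]
More usefully, I would take $M_t$ to be a pullback of the cospan
\[
M[-t]\xrightarrow{\;\eta\;}M[t]\xleftarrow{\;g[t-\epsilon]\;}N[t-\epsilon],
\]
suitably adjusted by shifts. Everything in this cospan lies in $\Acal$ by closure under shifts (including negative ones), and the pullback lies in $\Acal$ by closure under pullbacks. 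If a single pullback does not give the correct endpoints, I would instead use the symmetric variant: the limit of the zigzag built from $\eta$, $f$ and $g$, formed as an iterated pullback, which stays in $\Acal$. The first verification is the endpoints. At $t=0$ one leg becomes an isomorphism, so the pullback collapses to $M$. At $t=\epsilon$ the roles swap, and the interleaving identities $g[\epsilon]f=\eta_{2\epsilon}$ and $f[\epsilon]g=\eta_{2\epsilon}$ show that the pullback is isomorphic to $N$, or at least at interleaving distance $0$ from $N$, which suffices for a pseudometric path.

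\emph{Interpolating interleavings.} For $s\le t$ I would define maps $M_s\to M_t[t-s]$ and $M_t\to M_s[t-s]$ through the universal property of the pullback. Each is obtained by composing the projections of the source pullback with the structure maps $\eta$, or with $f$ or $g$, so that the resulting cone over the shifted target cospan commutes. The interleaving identities for these maps follow from uniqueness in the universal property: both sides induce the same cone, because the compositions of $f$ and $g$ reduce to $\eta_{2\epsilon}$. Consequently $d_I(M_s,M_t)\le|s-t|$, which gives both continuity of the path and the length bound.

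\emph{Expected obstacle.} The main difficulty is choosing the cospan, with its shifts, correctly, so that both endpoints are recovered exactly and the candidate interleaving maps are well defined and commute. Checking the cone conditions there uses the interleaving identities in a delicate way. My first attempt would be the symmetric construction just described, verified pointwise at each $r\in\R^n$. There pullbacks are fibre products of vector spaces, so every cone condition reduces to an identity among structure maps together with the equations $g[\epsilon]f=\eta^M_{2\epsilon}$ and $f[\epsilon]g=\eta^N_{2\epsilon}$.
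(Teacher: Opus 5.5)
Your reduction is correct. An interpolating family $(M_t)_{t\in[0,\epsilon]}$ in $\Acal$ with $M_0\cong M$, $M_\epsilon\cong N$, and $M_s,M_t$ being $|s-t|$-interleaved gives a $1$-Lipschitz path of length at most $\epsilon$. This is also what the paper's cited argument \cite{scoccola2020locally} supplies: a right Kan extension of the interleaving, computed pointwise as a finite limit. But producing that family is the entire content of the lemma, and your proposal does not produce one.

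The only concrete cospan you write down fails at both ends:
\begin{itemize}
\item At $t=0$ the leg $M\to M$ is the identity. A pullback along an isomorphism collapses onto the \emph{other} leg, so you get $N[-\epsilon]$, not $M$.
\item At $t=\epsilon$ you get $M[-\epsilon]\times_{M[\epsilon]}N$, which need not even be at distance $0$ from $N$.
\end{itemize}
For a concrete failure, take $n=1$, $M=N=\kbb_{[0,\infty)}$ and $f=g=\eta_\epsilon$. Both endpoints of your family equal $\kbb_{[\epsilon,\infty)}$, which is at distance $\epsilon$ from $M=N$. The fallback (``the symmetric variant \ldots\ formed as an iterated pullback'') is left unspecified. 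The endpoint and cone verifications are deferred to your ``expected obstacle''.

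The missing idea is that the interpolant must impose two compatibility conditions at once, one in $M$ and one in $N$, on the same pair of elements. The relevant diagram $M[t]\to M[2\epsilon-t]\leftarrow N[\epsilon-t]\to N[\epsilon+t]\leftarrow M[t]$ is a cycle. An iterated pullback along a zigzag would not identify the two occurrences of $M[t]$, so you need a direct sum. Concretely, set
\[
M_t\coloneqq M[t]\times_{M[2\epsilon-t]\oplus N[\epsilon+t]}N[\epsilon-t],
\]
with legs $(\eta^{M[t]}_{2\epsilon-2t},\,f[t])$ and $(g[\epsilon-t],\,\eta^{N[\epsilon-t]}_{2t})$. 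This lies in $\Acal$ by closure under shifts, finite sums, and pullbacks. Your proposal never uses the finite-sums hypothesis, which is a warning sign.

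With this definition:
\begin{itemize}
\item The map $m\mapsto(m,f(m))$ is an isomorphism $M\cong M_0$, because $g[\epsilon]f=\eta^M_{2\epsilon}$.
\item The map $n\mapsto(g(n),n)$ is an isomorphism $N\cong M_\epsilon$, because $f[\epsilon]g=\eta^N_{2\epsilon}$.
\item For $s\le t$, the maps $M_s\to M_t[t-s]$, $(m,n)\mapsto(\varphi^M(m),n)$, and $M_t\to M_s[t-s]$, $(m,n)\mapsto(m,\varphi^N(n))$, are well defined by a pointwise check.
\end{itemize}
Both composites of the last two maps are structure maps. So $M_s$ and $M_t$ are $(t-s)$-interleaved, with no universal-property argument needed.
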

\begin{proof}
    This fact is implicit in the interpolation results in \cite{bubenik-silva-nanda,chazal-structure-stability}.
    At this level of generality, it follows from \cite[Example~3.2.7,~Proposition~3.2.19,~and~Theorem~4.4.2]{scoccola2020locally}, since being closed under finite sums and pullbacks is equivalent to being closed under finite limits.
\end{proof}

\begin{corollary}
    \label{corollary:no-go}
    Let $\Acal \subseteq \Veck^{\R^n}$ be a subcategory that is closed under shifts, finite sums, and pullbacks.
    Let $G$ be a commutative monoid with a symmetric function $d_G : G \times G \to [0,\infty]$ with $d_G(g,g) = 0$ for all $g \in G$, and such that addition $G \times G \to G$ is uniformly continuous.
    Let $\alpha : \Acal \to G$ be additive and uniformly continuous with respect to $d_I$ and $d_G$.
    Assume that:
    \begin{enumerate}
        \item The function $d_G$ satisfies the triangle inequality;
        \item There exists a function $- : G \to G$ such that $d_G(c + (-c), 0) = 0$ for all $c \in G$.
    \end{enumerate}
    If $M, N \in \Acal$ are such that $d_I(M,N) < \infty$, then $d_G(\alpha(M), \alpha(N)) = 0$.
\end{corollary}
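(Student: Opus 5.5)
The plan is to derive \cref{corollary:no-go} directly from \cref{theorem:main-no-go}, with $A$ the commutative monoid of isomorphism classes of objects of $\Acal$. Addition on $A$ is direct sum and the identity is the zero representation; this is well defined and commutative because $\Acal$ is closed under finite sums. The pseudometric $d_A$ is the interleaving distance $d_I$ restricted to $\Acal$. Since $\alpha$ is isomorphism invariant, it descends to a function $f \colon A \to G$. This $f$ is additive because $\alpha$ is, and it is uniformly continuous because $\alpha$ is uniformly continuous with respect to $d_I$ and $d_G$.

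The hypotheses on $A$ are checked as follows.
\begin{itemize}
\item $d_I$ is an extended pseudometric. It is symmetric, it vanishes on the diagonal since a $0$-interleaving is an isomorphism, and it satisfies the triangle inequality because interleavings compose.
\item $d_I$ is intrinsic on $\Acal$. This is exactly \cref{lemma:subcategory-intrinsic}, which applies because $\Acal$ is closed under shifts, finite sums, and pullbacks.
\item $d_I$ is $\infty$-subadditive. This was noted above, and it takes $p = \infty \in (1,\infty]$. To justify it, take an $\epsilon_1$-interleaving $(f_1,g_1)$ between $M_1$ and $N_1$ and an $\epsilon_2$-interleaving $(f_2,g_2)$ between $M_2$ and $N_2$, and set $\epsilon = \max(\epsilon_1,\epsilon_2)$. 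Post-composing $f_j$ and $g_j$ with the shift morphisms $\eta_{\epsilon-\epsilon_j}$ turns each into an $\epsilon$-interleaving. Their direct sum is then an $\epsilon$-interleaving of $M_1 \oplus M_2$ with $N_1 \oplus N_2$, because $\eta$ is additive and shifting commutes with direct sums.
\end{itemize}

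The hypotheses on $G$ are checked as follows.
\begin{itemize}
\item Hypothesis $(1)$, together with symmetry and $d_G(g,g)=0$, makes $d_G$ an extended pseudometric.
\item Hypothesis $(2)$ supplies the cancellation function $-$.
\item Addition on $G$ is uniformly continuous by assumption. The corollary does not specify a metric on $G \times G$, so I would read it as the max metric used in \cref{theorem:main-no-go}. Any of the standard product metrics is uniformly equivalent to the max metric, so this reading costs nothing.
\end{itemize}

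With all hypotheses in place, \cref{theorem:main-no-go} gives $d_G(f([M]), f([N])) = 0$ whenever $d_I(M,N) < \infty$. This is the claim, since $f([M]) = \alpha(M)$.

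The main obstacle I expect is the bookkeeping of passing from a subcategory to a set-level monoid. Isomorphism classes of a large category may not form a set. I would handle this either by ignoring it, since the argument of the theorem never uses that $A$ is a set, or by restricting to a small full subcategory containing $M$, $N$, and the finitely many objects along the interpolating paths. The only genuinely substantive input, intrinsicness, is already supplied by \cref{lemma:subcategory-intrinsic}.
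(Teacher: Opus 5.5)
Your proposal is correct and is essentially the paper's proof. You apply \cref{theorem:main-no-go} with $p = \infty$, taking $A$ to be the (isomorphism classes of) objects of $\Acal$ with $d_A = d_I$; intrinsicness comes from \cref{lemma:subcategory-intrinsic}, and $\infty$-subadditivity from taking direct sums of interleavings. Your extra bookkeeping only makes explicit what the paper leaves implicit: passing to isomorphism classes, padding both interleavings to a common $\epsilon$ via the maps $\eta$, and reading the metric on $G \times G$ as the max metric.
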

\begin{proof}
    We apply \cref{theorem:main-no-go} with $p = \infty$, $A = \Acal$, and $d_A = d_I$, the interleaving distance.
    Intrinsicness is satisfied thanks to \cref{lemma:subcategory-intrinsic}.
    To see that the interleaving distance restricted to $\Acal$ is $\infty$-subadditive, note that $d_I(M \oplus N, M' \oplus N') \leq \max\{d_I(M,M'), d_I(N,N')\}$, by taking the direct sum of an interleaving between $M$ and $M'$ and an interleaving between $N$ and $N'$.
\end{proof}

%% file: arxiv.bbl
\newcommand{\etalchar}[1]{$^{#1}$}
\providecommand{\bysame}{\leavevmode\hbox to3em{\hrulefill}\thinspace}
\providecommand{\MR}{\relax\ifhmode\unskip\space\fi MR }
\providecommand{\MRhref}[2]{%
  \href{http://www.ams.org/mathscinet-getitem?mr=#1}{#2}
}
\providecommand{\href}[2]{#2}
\begin{thebibliography}{WNvdW{\etalchar{+}}21}

\bibitem[AENY23a]{asashiba-et-al}
Hideto Asashiba, Emerson~G. Escolar, Ken Nakashima, and Michio Yoshiwaki, \emph{Approximation by interval-decomposables and interval resolutions of persistence modules}, J. Pure Appl. Algebra \textbf{227} (2023), no.~10, Paper No. 107397, 20. \MR{4576885}

\bibitem[AENY23b]{AsashibaEscolarNakashimaYoshiwaki2023}
\bysame, \emph{Approximation by interval-decomposables and interval resolutions of persistence modules}, J. Pure Appl. Algebra \textbf{227} (2023), no.~10, Paper No. 107397, 20. \MR{4576885}

\bibitem[AET25]{aoki-et-al}
Toshitaka Aoki, Emerson~G. Escolar, and Shunsuke Tada, \emph{Summand-injectivity of interval covers and monotonicity of interval resolution global dimensions}, J. Appl. Comput. Topol. \textbf{9} (2025), no.~2, Paper No. 13, 34. \MR{4908881}

\bibitem[AP26]{asashiba2026minimal}
Hideto Asashiba and Amit~K Patel, \emph{Minimal projective resolutions, {M}\"{o}bius inversion, and bottleneck stability}, arXiv preprint arXiv:2602.15726 (2026).

\bibitem[ARS97]{auslander-reiten-smalo}
Maurice Auslander, Idun Reiten, and Sverre~O. Smal\o, \emph{Representation theory of {A}rtin algebras}, Cambridge Studies in Advanced Mathematics, vol.~36, Cambridge University Press, Cambridge, 1997, Corrected reprint of the 1995 original. \MR{1476671}

\bibitem[AS94]{auslander-solberg}
M.~Auslander and \O~. Solberg, \emph{Relative homology}, Finite-dimensional algebras and related topics ({O}ttawa, {ON}, 1992), NATO Adv. Sci. Inst. Ser. C: Math. Phys. Sci., vol. 424, Kluwer Acad. Publ., Dordrecht, 1994, pp.~347--359. \MR{1308995}

\bibitem[AT25]{aoki-tada}
Toshitaka Aoki and Shunsuke Tada, \emph{On preservation of relative resolutions for poset representations}, arXiv preprint arXiv:2506.21227 (2025).

\bibitem[Azu50]{Azumaya1950}
Gor\^{o} Azumaya, \emph{Corrections and supplementaries to my paper concerning {K}rull-{R}emak-{S}chmidt's theorem}, Nagoya Math. J. \textbf{1} (1950), 117--124. \MR{37832}

\bibitem[Bac75]{baclawski}
Kenneth Bac{\l}awski, \emph{Whitney numbers of geometric lattices}, Advances in Math. \textbf{16} (1975), 125--138. \MR{387086}

\bibitem[Bar94]{barannikov}
S.~A. Barannikov, \emph{The framed {M}orse complex and its invariants}, Singularities and bifurcations, Adv. Soviet Math., vol.~21, Amer. Math. Soc., Providence, RI, 1994, pp.~93--115.

\bibitem[Bau21]{bauer}
Ulrich Bauer, \emph{Ripser: efficient computation of {V}ietoris-{R}ips persistence barcodes}, J. Appl. Comput. Topol. \textbf{5} (2021), no.~3, 391--423. \MR{4298669}

\bibitem[BB25]{bjerkevik2025computing}
H{\aa}vard~Bakke Bjerkevik and Magnus~Bakke Botnan, \emph{Computing p-presentation distances is hard}, Discrete \& Computational Geometry (2025), 1--41.

\bibitem[BBBF24]{bauer-botnan-fluhr}
Ulrich Bauer, Magnus Bakke~Botnan, and Benedikt Fluhr, \emph{Universal distances for extended persistence}, J. Appl. Comput. Topol. \textbf{8} (2024), no.~3, 475--530. \MR{4799021}

\bibitem[BBH24]{BlanchetteBrustleHanson2024}
Benjamin Blanchette, Thomas Br\"{u}stle, and Eric~J. Hanson, \emph{Homological approximations in persistence theory}, Canad. J. Math. \textbf{76} (2024), no.~1, 66--103. \MR{4687766}

\bibitem[BBH25]{blanchette-brustle-hanson-survey}
\bysame, \emph{Exact structures for persistence modules}, Representations of algebras and related topics, EMS Ser. Congr. Rep., Eur. Math. Soc., Z\"{u}rich, [2025] \copyright 2025, pp.~121--160. \MR{4993173}

\bibitem[BBK20]{bjerkevik-botnan-kerber}
H\aa vard~Bakke Bjerkevik, Magnus~Bakke Botnan, and Michael Kerber, \emph{Computing the interleaving distance is {NP}-hard}, Found. Comput. Math. \textbf{20} (2020), no.~5, 1237--1271. \MR{4156997}

\bibitem[BBK{\etalchar{+}}24]{benjamin-et-al}
Katherine Benjamin, Aneesha Bhandari, Jessica~D Kepple, Rui Qi, Zhouchun Shang, Yanan Xing, Yanru An, Nannan Zhang, Yong Hou, Tanya~L Crockford, et~al., \emph{Multiscale topology classifies cells in subcellular spatial transcriptomics}, Nature \textbf{630} (2024), no.~8018, 943--949.

\bibitem[BBOS20]{bauer-botnan-oppermann-steen}
Ulrich Bauer, Magnus~B. Botnan, Steffen Oppermann, and Johan Steen, \emph{Cotorsion torsion triples and the representation theory of filtered hierarchical clustering}, Adv. Math. \textbf{369} (2020), 107171, 51. \MR{4091895}

\bibitem[BBOS25]{bauer2025additive}
Ulrich Bauer, Magnus~Bakke Botnan, Steffen Oppermann, and Johan Steen, \emph{On the additive image of 0th persistent homology}, arXiv preprint arXiv:2501.09132 (2025).

\bibitem[BBP20]{brodzki2020complexity}
Jacek Brodzki, Matthew Burfitt, and Mariam Pirashvili, \emph{On the complexity of zero-dimensional multiparameter persistence}, arXiv preprint arXiv:2008.11532 (2020).

\bibitem[BBS24]{bindua-brustle-scoccola}
Riju Bindua, Thomas Brüstle, and Luis Scoccola, \emph{Decomposing zero-dimensional persistent homology over rooted tree quivers}, 2024.

\bibitem[BC24]{bubenik-catanzaro}
Peter Bubenik and Michael~J. Catanzaro, \emph{Multiparameter persistent homology via generalized {M}orse theory}, Toric topology and polyhedral products, Fields Inst. Commun., vol.~89, Springer, Cham, 2024, pp.~55--79.

\bibitem[BCB20]{botnan-crawley}
Magnus~Bakke Botnan and William Crawley-Boevey, \emph{Decomposition of persistence modules}, Proc. Amer. Math. Soc. \textbf{148} (2020), no.~11, 4581--4596. \MR{4143378}

\bibitem[BCF{\etalchar{+}}08]{biasotti-size-functions}
S.~Biasotti, A.~Cerri, P.~Frosini, D.~Giorgi, and C.~Landi, \emph{Multidimensional size functions for shape comparison}, J. Math. Imaging Vision \textbf{32} (2008), no.~2, 161--179. \MR{2434687}

\bibitem[BCM25]{balitskiy-coskunuzer-memoli}
Alexey Balitskiy, Baris Coskunuzer, and Facundo M\'{e}moli, \emph{Geometric bounds for persistence}, Trans. Amer. Math. Soc. \textbf{378} (2025), no.~12, 8437--8486. \MR{4982324}

\bibitem[BCY18]{boissonat-chazal-yvinec}
Jean-Daniel Boissonnat, Fr\'{e}d\'{e}ric Chazal, and Mariette Yvinec, \emph{Geometric and topological inference}, Cambridge Texts in Applied Mathematics, Cambridge University Press, Cambridge, 2018. \MR{3837127}

\bibitem[BCZty]{biran-cornea-zhang}
Paul Biran, Octav Cornea, and Jun Zhang, \emph{Triangulation, persistence, and fukaya categories}, (to appear in) Memoirs of the European Mathematical Society.

\bibitem[BDHS25]{blanchette2025stabilization}
Benjamin Blanchette, Justin Desrochers, Eric~J Hanson, and Luis Scoccola, \emph{Stabilization of the spread-global dimension}, arXiv preprint arXiv:2506.01828 (2025).

\bibitem[BDL25]{brustle2025generalized}
Thomas Br{\"u}stle, Justin Desrochers, and Samuel Leblanc, \emph{Generalized rank via minimal subposet}, arXiv preprint arXiv:2510.10837 (2025).

\bibitem[BdSN17]{bubenik-silva-nanda}
Peter Bubenik, Vin de~Silva, and Vidit Nanda, \emph{Higher interpolation and extension for persistence modules}, SIAM J. Appl. Algebra Geom. \textbf{1} (2017), no.~1, 272--284. \MR{3683688}

\bibitem[BdSS15]{bubenik-silva-scott}
Peter Bubenik, Vin de~Silva, and Jonathan Scott, \emph{Metrics for generalized persistence modules}, Found. Comput. Math. \textbf{15} (2015), no.~6, 1501--1531. \MR{3413628}

\bibitem[BE22]{bubenik-elchesen}
Peter Bubenik and Alex Elchesen, \emph{Virtual persistence diagrams, signed measures, {W}asserstein distances, and {B}anach spaces}, J. Appl. Comput. Topol. \textbf{6} (2022), no.~4, 429--474. \MR{4496687}

\bibitem[BEMP13]{bendich2013interlevel}
Paul Bendich, Herbert Edelsbrunner, Dmitriy Morozov, and Amit Patel, \emph{Homology and robustness of level and interlevel sets}, Homology Homotopy Appl. \textbf{15} (2013), no.~1, 51--72. \MR{3031814}

\bibitem[Ber23]{berkouk-instability}
Nicolas Berkouk, \emph{Persistence and the sheaf-function correspondence}, Forum Math. Sigma \textbf{11} (2023), Paper No. e113, 20. \MR{4679252}

\bibitem[BF22]{bauer2022relativeinterlevelsetcohomology}
Ulrich Bauer and Benedikt Fluhr, \emph{Relative interlevel set cohomology categorifies extended persistence diagrams}, 2022.

\bibitem[BG22]{berkouk-ginot}
Nicolas Berkouk and Gr\'{e}gory Ginot, \emph{A derived isometry theorem for sheaves}, Adv. Math. \textbf{394} (2022), Paper No. 108033, 39. \MR{4355732}

\bibitem[BG82]{bongartz-gabriel}
K.~Bongartz and P.~Gabriel, \emph{Covering spaces in representation-theory}, Invent. Math. \textbf{65} (1981/82), no.~3, 331--378. \MR{643558}

\bibitem[BGS96]{beilinson-et-al}
Alexander Beilinson, Victor Ginzburg, and Wolfgang Soergel, \emph{Koszul duality patterns in representation theory}, J. Amer. Math. Soc. \textbf{9} (1996), no.~2, 473--527. \MR{1322847}

\bibitem[BGS26]{bauer2026metrically}
Ulrich Bauer, Cameron Gusel, and Luis Scoccola, \emph{A metrically complete and krull--schmidt space of multiparameter persistence modules}, arXiv preprint arXiv:2603.12049 (2026).

\bibitem[Bje21]{bjerkevik}
H\aa vard~Bakke Bjerkevik, \emph{On the stability of interval decomposable persistence modules}, Discrete Comput. Geom. \textbf{66} (2021), no.~1, 92--121. \MR{4270636}

\bibitem[Bje26]{bjerkevik-2}
\bysame, \emph{Stabilizing {D}ecomposition of {M}ultiparameter {P}ersistence {M}odules}, Found. Comput. Math. \textbf{26} (2026), no.~2, 939--998. \MR{5010368}

\bibitem[BK23]{budney-kaczynski}
Ryan Budney and Tomasz Kaczynski, \emph{Bifiltrations and persistence paths for 2-{M}orse functions}, Algebr. Geom. Topol. \textbf{23} (2023), no.~6, 2895--2924.

\bibitem[BKS17]{bobrowski-kahle-skraba}
Omer Bobrowski, Matthew Kahle, and Primoz Skraba, \emph{Maximally persistent cycles in random geometric complexes}, Ann. Appl. Probab. \textbf{27} (2017), no.~4, 2032--2060. \MR{3693519}

\bibitem[BL15]{bauer-lesnick-induced}
Ulrich Bauer and Michael Lesnick, \emph{Induced matchings and the algebraic stability of persistence barcodes}, J. Comput. Geom. \textbf{6} (2015), no.~2, 162--191. \MR{3333456}

\bibitem[BL18]{botnan-lesnick}
Magnus~Bakke Botnan and Michael Lesnick, \emph{Algebraic stability of zigzag persistence modules}, Algebr. Geom. Topol. \textbf{18} (2018), no.~6, 3133--3204. \MR{3868218}

\bibitem[BL23a]{blumberg-lesnick}
Andrew~J. Blumberg and Michael Lesnick, \emph{Universality of the homotopy interleaving distance}, Trans. Amer. Math. Soc. \textbf{376} (2023), no.~12, 8269--8307. \MR{4669297}

\bibitem[BL23b]{botnan-lesnick-survey}
Magnus~Bakke Botnan and Michael Lesnick, \emph{An introduction to multiparameter persistence}, Representations of algebras and related structures, EMS Ser. Congr. Rep., EMS Press, Berlin, [2023] \copyright 2023, pp.~77--150. \MR{4693638}

\bibitem[BL24]{blumberg-lesnick-2}
Andrew~J. Blumberg and Michael Lesnick, \emph{Stability of 2-parameter persistent homology}, Found. Comput. Math. \textbf{24} (2024), no.~2, 385--427. \MR{4733354}

\bibitem[BLL26]{bauer2026dualities}
Ulrich Bauer, Fabian Lenzen, and Michael Lesnick, \emph{Dualities in multiparameter persistence}, arXiv preprint arXiv:2603.18224 (2026).

\bibitem[BLO22]{botnan-lebovici-oudot}
Magnus~Bakke Botnan, Vadim Lebovici, and Steve Oudot, \emph{On rectangle-decomposable 2-parameter persistence modules}, Discrete Comput. Geom. \textbf{68} (2022), no.~4, 1078--1101. \MR{4517095}

\bibitem[BLO23]{botnan-lebovici-oudot-2}
Magnus~B. Botnan, Vadim Lebovici, and Steve Oudot, \emph{Local characterizations for decomposability of 2-parameter persistence modules}, Algebr. Represent. Theory \textbf{26} (2023), no.~6, 3003--3046. \MR{4681340}

\bibitem[BMG{\etalchar{+}}24]{bauer-et-al}
Ulrich Bauer, Talha~Bin Masood, Barbara Giunti, Guillaume Houry, Michael Kerber, and Abhishek Rathod, \emph{Keeping it sparse: computing persistent homology revisited}, Comput. Geom. Topol. \textbf{3} (2024), no.~1, Art. 6, 26. \MR{4842013}

\bibitem[BMMS24]{bauer-medina-schmahl}
Ulrich Bauer, Anibal~M. Medina-Mardones, and Maximilian Schmahl, \emph{Persistent homology for functionals}, Commun. Contemp. Math. \textbf{26} (2024), no.~10, Paper No. 2350055, 40.

\bibitem[BMW25]{block-manin-weinberger}
Jonathan Block, Fedor Manin, and Shmuel Weinberger, \emph{Persistent homology of function spaces}, 2025, arXiv:2501.08867.

\bibitem[BOO25]{BotnanOppermannOudot2023}
Magnus~Bakke Botnan, Steffen Oppermann, and Steve Oudot, \emph{Signed barcodes for multi-parameter persistence via rank decompositions and rank-exact resolutions}, Found. Comput. Math. \textbf{25} (2025), no.~5, 1815--1874. \MR{4983902}

\bibitem[BOOS24]{botnan-et-al}
Magnus~Bakke Botnan, Steffen Oppermann, Steve Oudot, and Luis Scoccola, \emph{On the bottleneck stability of rank decompositions of multi-parameter persistence modules}, Adv. Math. \textbf{451} (2024), Paper No. 109780, 53. \MR{4761951}

\bibitem[BOST25]{brustle2025counts}
Thomas Br{\"u}stle, Steve Oudot, Luis Scoccola, and Hugh Thomas, \emph{Counts and end-curves in two-parameter persistence}, arXiv preprint arXiv:2505.13412 (2025).

\bibitem[BP20]{boissonat-pritam}
Jean-Daniel Boissonnat and Siddharth Pritam, \emph{Edge collapse and persistence of flag complexes}, 36th {I}nternational {S}ymposium on {C}omputational {G}eometry, LIPIcs. Leibniz Int. Proc. Inform., vol. 164, Schloss Dagstuhl. Leibniz-Zent. Inform., Wadern, 2020, pp.~Art. No. 19, 15. \MR{4117732}

\bibitem[BP21]{berkouk-petit}
Nicolas Berkouk and Fran\c{c}ois Petit, \emph{Ephemeral persistence modules and distance comparison}, Algebr. Geom. Topol. \textbf{21} (2021), no.~1, 247--277. \MR{4224741}

\bibitem[BPP{\etalchar{+}}24a]{buhovsky-et-al}
Lev Buhovsky, Jordan Payette, Iosif Polterovich, Leonid Polterovich, Egor Shelukhin, and Vuka{\v{s}}in Stojisavljevi{\'c}, \emph{Coarse nodal count and topological persistence}, Journal of the European Mathematical Society (2024).

\bibitem[BPP{\etalchar{+}}24b]{bujovsky-2}
Lev Buhovsky, Iosif Polterovich, Leonid Polterovich, Egor Shelukhin, and Vuka{\v{s}}in Stojisavljevi{\'c}, \emph{Persistent transcendental {B}\'ezout theorems}, Forum Math. Sigma \textbf{12} (2024), Paper No. e72, 28.

\bibitem[BR24]{bauer-roll}
Ulrich Bauer and Fabian Roll, \emph{Wrapping cycles in {D}elaunay complexes: bridging persistent homology and discrete {M}orse theory}, 40th {I}nternational {S}ymposium on {C}omputational {G}eometry, LIPIcs. Leibniz Int. Proc. Inform., vol. 293, Schloss Dagstuhl. Leibniz-Zent. Inform., Wadern, 2024, pp.~Art. No. 15, 16. \MR{4757910}

\bibitem[BS14]{bubenik-scott}
Peter Bubenik and Jonathan~A. Scott, \emph{Categorification of persistent homology}, Discrete Comput. Geom. \textbf{51} (2014), no.~3, 600--627. \MR{3201246}

\bibitem[BS24]{bobrowski2024universality}
Omer Bobrowski and Primoz Skraba, \emph{Universality in random persistent homology and scale-invariant functionals}, arXiv preprint arXiv:2406.05553 (2024).

\bibitem[BS25]{bauer-scoccola}
Ulrich Bauer and Luis Scoccola, \emph{Multi-parameter persistence modules are generically indecomposable}, Int. Math. Res. Not. IMRN (2025), no.~5, Paper No. rnaf034, 31. \MR{4870578}

\bibitem[BvR14]{berg-roosmalen}
Carl~Fredrik Berg and Adam-Christiaan van Roosmalen, \emph{Representations of thread quivers}, Proc. Lond. Math. Soc. (3) \textbf{108} (2014), no.~2, 253--290. \MR{3166353}

\bibitem[Car09]{carlsson}
Gunnar Carlsson, \emph{Topology and data}, Bull. Amer. Math. Soc. (N.S.) \textbf{46} (2009), no.~2, 255--308. \MR{2476414}

\bibitem[Car23]{cardona}
Robert Cardona, \emph{Variations and approximations of interleaving distances}, Ph.D. thesis, State University of New York at Albany, 2023.

\bibitem[CB94]{CrawleyBoevey1994}
William Crawley-Boevey, \emph{Locally finitely presented additive categories}, Communications in Algebra \textbf{22} (1994), no.~5, 1641--1674.

\bibitem[CB15]{crawley-boevey}
\bysame, \emph{Decomposition of pointwise finite-dimensional persistence modules}, J. Algebra Appl. \textbf{14} (2015), no.~5, 1550066, 8. \MR{3323327}

\bibitem[CCBdS16]{chazal-crawley-silva}
Fr\'{e}d\'{e}ric Chazal, William Crawley-Boevey, and Vin de~Silva, \emph{The observable structure of persistence modules}, Homology Homotopy Appl. \textbf{18} (2016), no.~2, 247--265. \MR{3575998}

\bibitem[CCLL22]{cardona-et-al}
Robert Cardona, Justin Curry, Tung Lam, and Michael Lesnick, \emph{The universal {$\ell^p$}-metric on merge trees}, 38th {I}nternational {S}ymposium on {C}omputational {G}eometry, LIPIcs. Leibniz Int. Proc. Inform., vol. 224, Schloss Dagstuhl. Leibniz-Zent. Inform., Wadern, 2022, pp.~Art. No. 24, 20. \MR{4470903}

\bibitem[CCSG{\etalchar{+}}09]{chazal-socg}
Fr{\'e}d{\'e}ric Chazal, David Cohen-Steiner, Marc Glisse, Leonidas~J Guibas, and Steve~Y Oudot, \emph{Proximity of persistence modules and their diagrams}, Proceedings of the twenty-fifth annual symposium on Computational geometry, 2009, pp.~237--246.

\bibitem[CDFF10]{cagliari-difabio-ferri}
Francesca Cagliari, Barbara Di~Fabio, and Massimo Ferri, \emph{One-dimensional reduction of multidimensional persistent homology}, Proc. Amer. Math. Soc. \textbf{138} (2010), no.~8, 3003--3017. \MR{2644911}

\bibitem[CDFF{\etalchar{+}}13]{cerri-betti-stable}
Andrea Cerri, Barbara Di~Fabio, Massimo Ferri, Patrizio Frosini, and Claudia Landi, \emph{Betti numbers in multidimensional persistent homology are stable functions}, Math. Methods Appl. Sci. \textbf{36} (2013), no.~12, 1543--1557. \MR{3083259}

\bibitem[CdSGO16]{chazal-structure-stability}
Fr\'{e}d\'{e}ric Chazal, Vin de~Silva, Marc Glisse, and Steve Oudot, \emph{The structure and stability of persistence modules}, SpringerBriefs in Mathematics, Springer, [Cham], 2016. \MR{3524869}

\bibitem[CdSM09]{carlsson-de-silva-morozov}
Gunnar Carlsson, Vin de~Silva, and Dmitriy Morozov, \emph{Zigzag persistent homology and real-valued functions}, Proceedings of the Twenty-Fifth Annual Symposium on Computational Geometry (New York, NY, USA), SCG '09, Association for Computing Machinery, 2009, p.~247–256.

\bibitem[CdSO14]{chazal-desilva-oudot}
Fr\'{e}d\'{e}ric Chazal, Vin de~Silva, and Steve Oudot, \emph{Persistence stability for geometric complexes}, Geom. Dedicata \textbf{173} (2014), 193--214. \MR{3275299}

\bibitem[CEF16]{MR3533890}
Andrea Cerri, Marc Ethier, and Patrizio Frosini, \emph{The coherent matching distance in 2{D} persistent homology}, Computational topology in image context, Lecture Notes in Comput. Sci., vol. 9667, Springer, [Cham], 2016, pp.~216--227. \MR{3533890}

\bibitem[CEF19]{cerri-ethier-frosini}
Andrea Cerri, Marc Ethier, and Patrizio Frosini, \emph{On the geometrical properties of the coherent matching distance in 2{D} persistent homology}, J. Appl. Comput. Topol. \textbf{3} (2019), no.~4, 381--422.

\bibitem[CGL21]{chacholski-giunti-landi}
Wojciech Chach\'{o}lski, Barbara Giunti, and Claudia Landi, \emph{Invariants for tame parametrised chain complexes}, Homology Homotopy Appl. \textbf{23} (2021), no.~2, 183--213. \MR{4281380}

\bibitem[CGOS09]{MR2807544}
Fr\'{e}d\'{e}ric Chazal, Leonidas~J. Guibas, Steve~Y. Oudot, and Primoz Skraba, \emph{Analysis of scalar fields over point cloud data}, Proceedings of the {T}wentieth {A}nnual {ACM}-{SIAM} {S}ymposium on {D}iscrete {A}lgorithms, SIAM, Philadelphia, PA, 2009, pp.~1021--1030. \MR{2807544}

\bibitem[CGOS13]{chazal-JACM}
\bysame, \emph{Persistence-based clustering in {R}iemannian manifolds}, J. ACM \textbf{60} (2013), no.~6, Art. 41, 38. \MR{3144911}

\bibitem[CGR{\etalchar{+}}25]{chacholski-et-al}
Wojciech Chach\'{o}lski, Andrea Guidolin, Isaac Ren, Martina Scolamiero, and Francesca Tombari, \emph{Koszul complexes and relative homological algebra of functors over posets}, Found. Comput. Math. \textbf{25} (2025), no.~4, 1121--1165. \MR{4951524}

\bibitem[CIK97]{chistov-ivanyos-karpinski}
Alexander Chistov, G\'abor Ivanyos, and Marek Karpinski, \emph{Polynomial time algorithms for modules over finite dimensional algebras}, Proceedings of the 1997 International Symposium on Symbolic and Algebraic Computation (ISSAC '97), ACM, New York, 1997, pp.~68--74.

\bibitem[CKM22]{clause2022generalized}
Nathaniel Clause, Woojin Kim, and Facundo Memoli, \emph{The generalized rank invariant: M$\backslash$" obius invertibility, discriminating power, and connection to other invariants}, arXiv preprint arXiv:2207.11591 (2022).

\bibitem[CL11]{cagliari-landi}
Francesca Cagliari and Claudia Landi, \emph{Finiteness of rank invariants of multidimensional persistent homology groups}, Appl. Math. Lett. \textbf{24} (2011), no.~4, 516--518. \MR{2749737}

\bibitem[CL18]{chambers}
Erin~Wolf Chambers and David Letscher, \emph{Persistent homology over directed acyclic graphs}, Research in computational topology, Assoc. Women Math. Ser., vol.~13, Springer, Cham, 2018, pp.~11--32. \MR{3904999}

\bibitem[CM10]{carlsson-memoli}
Gunnar Carlsson and Facundo M\'{e}moli, \emph{Characterization, stability and convergence of hierarchical clustering methods}, J. Mach. Learn. Res. \textbf{11} (2010), 1425--1470. \MR{2645457}

\bibitem[CM21]{chazal2021introduction}
Fr{\'e}d{\'e}ric Chazal and Bertrand Michel, \emph{An introduction to topological data analysis: fundamental and practical aspects for data scientists}, Frontiers in artificial intelligence \textbf{4} (2021), 667963.

\bibitem[CO08]{MR2504289}
Fr\'{e}d\'{e}ric Chazal and Steve~Y. Oudot, \emph{Towards persistence-based reconstruction in {E}uclidean spaces}, Computational geometry ({SCG}'08), ACM, New York, 2008, pp.~232--241. \MR{2504289}

\bibitem[CO20]{cochoy}
J\'{e}r\'{e}my Cochoy and Steve Oudot, \emph{Decomposition of exact pfd persistence bimodules}, Discrete Comput. Geom. \textbf{63} (2020), no.~2, 255--293. \MR{4057439}

\bibitem[CS25]{curto2025topological}
Carina Curto and Nicole Sanderson, \emph{Topological neuroscience: linking circuits to function}, Annual Review of Neuroscience \textbf{48} (2025).

\bibitem[CSEH07]{cohen-steiner-edelsbrunner-harer}
David Cohen-Steiner, Herbert Edelsbrunner, and John Harer, \emph{Stability of persistence diagrams}, Discrete Comput. Geom. \textbf{37} (2007), no.~1, 103--120. \MR{2279866}

\bibitem[CSEH09]{cohen-steiner-edelsbrunner-harer2009extending-persistence}
\bysame, \emph{Extending persistence using {P}oincar\'{e} and {L}efschetz duality}, Found. Comput. Math. \textbf{9} (2009), no.~1, 79--103. \MR{2472288}

\bibitem[Cur14]{curry}
Justin~Michael Curry, \emph{Sheaves, cosheaves and applications}, University of Pennsylvania, 2014.

\bibitem[CYC{\etalchar{+}}25]{calles-et-al}
Juan Calles, Jacky~HT Yip, Gabriella Contardo, Jorge Nore{\~n}a, Adam Rouhiainen, and Gary Shiu, \emph{Cosmology with persistent homology: parameter inference via machine learning}, Journal of Cosmology and Astroparticle Physics \textbf{2025} (2025), no.~09, 064.

\bibitem[CZ09]{calrsson-zomorodian}
Gunnar Carlsson and Afra Zomorodian, \emph{The theory of multidimensional persistence}, Discrete Comput. Geom. \textbf{42} (2009), no.~1, 71--93. \MR{2506738}

\bibitem[CZCG04]{carlsson2004persistence}
Gunnar Carlsson, Afra Zomorodian, Anne Collins, and Leonidas Guibas, \emph{Persistence barcodes for shapes}, Proceedings of the 2004 Eurographics/ACM SIGGRAPH symposium on Geometry processing, 2004, pp.~124--135.

\bibitem[DFL03]{d2003optimal}
Michele D'Amico, Patrizio Frosini, and Claudia Landi, \emph{Optimal matching between reduced size functions}, DISMI, Univ. di Modena e Reggio Emilia, Italy, Technical report \textbf{35} (2003).

\bibitem[DG01]{dung-garcia}
Nguyen~Viet Dung and Jos\'{e}~Luis Garc\'{\i}a, \emph{Additive categories of locally finite representation type}, J. Algebra \textbf{238} (2001), no.~1, 200--238. \MR{1822190}

\bibitem[DJK25]{dey2025}
Tamal~K. Dey, Jan Jendrysiak, and Michael Kerber, \emph{Decomposing multiparameter persistence modules}, 2025.

\bibitem[DKM24]{dey-kim-memoli}
Tamal~K. Dey, Woojin Kim, and Facundo M\'{e}moli, \emph{Computing generalized rank invariant for 2-parameter persistence modules via zigzag persistence and its applications}, Discrete Comput. Geom. \textbf{71} (2024), no.~1, 67--94. \MR{4685709}

\bibitem[DL21]{divol-lacombe}
Vincent Divol and Th\'{e}o Lacombe, \emph{Understanding the topology and the geometry of the space of persistence diagrams via optimal partial transport}, J. Appl. Comput. Topol. \textbf{5} (2021), no.~1, 1--53. \MR{4224153}

\bibitem[DL26]{dey2026limit}
Tamal~K Dey and Michael Lesnick, \emph{Limit computation over posets via minimal initial functors}, arXiv preprint arXiv:2601.00209 (2026).

\bibitem[DRSS99]{draxler-et-al}
Peter Dr\"{a}xler, Idun Reiten, Sverre~O. Smal\o, and \O~yvind Solberg, \emph{Exact categories and vector space categories}, Trans. Amer. Math. Soc. \textbf{351} (1999), no.~2, 647--682, With an appendix by B. Keller. \MR{1608305}

\bibitem[DS78]{drozdowski}
Grzegorz Drozdowski and Daniel Simson, \emph{Remarks on posets of finite representation type}, Preprint, Tor\'{u}n (1978), 1--23.

\bibitem[dSMP16]{desilva-munch-patel}
Vin de~Silva, Elizabeth Munch, and Amit Patel, \emph{Categorified {R}eeb graphs}, Discrete Comput. Geom. \textbf{55} (2016), no.~4, 854--906. \MR{3505333}

\bibitem[dSMS18]{silva-munch-stefanou}
V.~de~Silva, E.~Munch, and A.~Stefanou, \emph{Theory of interleavings on categories with a flow}, Theory Appl. Categ. \textbf{33} (2018), Paper No. 21, 583--607. \MR{3812461}

\bibitem[dSMVJ11]{deSilva-Vejdemo-Johansson-circular}
Vin de~Silva, Dmitriy Morozov, and Mikael Vejdemo-Johansson, \emph{Persistent cohomology and circular coordinates}, Discrete Comput. Geom. \textbf{45} (2011), no.~4, 737--759. \MR{2787567}

\bibitem[DW22]{dey-wang}
Tamal~Krishna Dey and Yusu Wang, \emph{Computational topology for data analysis}, Cambridge University Press, Cambridge, 2022. \MR{4381505}

\bibitem[EH08]{edelsbrunner-harer}
Herbert Edelsbrunner and John Harer, \emph{Persistent homology---a survey}, Surveys on discrete and computational geometry, Contemp. Math., vol. 453, Amer. Math. Soc., Providence, RI, 2008, pp.~257--282. \MR{2405684}

\bibitem[EH10]{edelsbrunner-harer-2}
Herbert Edelsbrunner and John~L. Harer, \emph{Computational topology}, American Mathematical Society, Providence, RI, 2010, An introduction. \MR{2572029}

\bibitem[EK24]{escolar2024barcoding}
Emerson~G Escolar and Woojin Kim, \emph{Barcoding invariants and their equivalent discriminating power}, arXiv preprint arXiv:2412.04995 (2024).

\bibitem[ELZ02]{edelsbrunner-letscher-zomorodian}
Herbert Edelsbrunner, David Letscher, and Afra Zomorodian, \emph{Topological persistence and simplification}, vol.~28, 2002, Discrete and computational geometry and graph drawing (Columbia, SC, 2001), pp.~511--533. \MR{1949898}

\bibitem[EM17]{edelsbrunner2017persistent}
Herbert Edelsbrunner and Dmitriy Morozov, \emph{Persistent homology}, Handbook of Discrete and Computational Geometry, Chapman and Hall/CRC, 2017, pp.~637--661.

\bibitem[Fer24]{fersztand}
Marc Fersztand, \emph{Harder-narasimhan filtrations of persistence modules: metric stability}, 2024.

\bibitem[FJ26]{fersztand-jendrysiak}
Marc Fersztand and Jan Jendrysiak, \emph{Computing the skyscraper invariant}, 2026.

\bibitem[FJNT24]{Fersztand-et-al}
Marc Fersztand, Emile Jacquard, Vidit Nanda, and Ulrike Tillmann, \emph{Harder-{N}arasimhan filtrations of persistence modules}, Trans. London Math. Soc. \textbf{11} (2024), no.~1, Paper No. e70003, 40. \MR{4836795}

\bibitem[FLR{\etalchar{+}}14]{fasy-et-al}
Brittany~Terese Fasy, Fabrizio Lecci, Alessandro Rinaldo, Larry Wasserman, Sivaraman Balakrishnan, and Aarti Singh, \emph{Confidence sets for persistence diagrams}, Ann. Statist. \textbf{42} (2014), no.~6, 2301--2339. \MR{3269981}

\bibitem[Flu24]{fluhr-dissertation}
Benedikt Fluhr, \emph{Cohomological and derived persistence theory of functions}, Ph.D. thesis, Technische Universität München, 2024, p.~260.

\bibitem[FOOO09]{fukaya-1}
Kenji Fukaya, Yong-Geun Oh, Hiroshi Ohta, and Kaoru Ono, \emph{Lagrangian intersection {F}loer theory: anomaly and obstruction. {P}art {I}}, AMS/IP Studies in Advanced Mathematics, vol. 46.1, American Mathematical Society, Providence, RI; Somerville, MA, 2009.

\bibitem[FOOO13]{fukaya-2}
\bysame, \emph{Displacement of polydisks and {L}agrangian {F}loer theory}, J. Symplectic Geom. \textbf{11} (2013), no.~2, 231--268.

\bibitem[Fro90]{frosini-2}
Patrizio Frosini, \emph{A distance for similarity classes of submanifolds of a {E}uclidean space}, Bull. Austral. Math. Soc. \textbf{42} (1990), no.~3, 407--416. \MR{1083277}

\bibitem[Fro92]{frosini1992measuring}
\bysame, \emph{Measuring shapes by size functions}, Intelligent robots and computer vision X: algorithms and techniques, vol. 1607, SPIE, 1992, pp.~122--133.

\bibitem[Fry87]{fry1987defining}
Steve Fry, \emph{Defining and sizing-up mountains}, Summit, Jan.-Feb (1987), 16--21.

\bibitem[Gab72]{gabriel}
Peter Gabriel, \emph{Unzerlegbare {D}arstellungen. {I}}, Manuscripta Math. \textbf{6} (1972), 71--103; correction, ibid. 6 (1972), 309. \MR{332887}

\bibitem[GHP{\etalchar{+}}22]{gardner-grid-cells}
Richard~J Gardner, Erik Hermansen, Marius Pachitariu, Yoram Burak, Nils~A Baas, Benjamin~A Dunn, May-Britt Moser, and Edvard~I Moser, \emph{Toroidal topology of population activity in grid cells}, Nature \textbf{602} (2022), no.~7895, 123--128.

\bibitem[Ghr08]{ghrist}
Robert Ghrist, \emph{Barcodes: the persistent topology of data}, Bull. Amer. Math. Soc. (N.S.) \textbf{45} (2008), no.~1, 61--75. \MR{2358377}

\bibitem[GK15]{gay-kirby}
David~T. Gay and Robion Kirby, \emph{Indefinite {M}orse 2-functions: broken fibrations and generalizations}, Geom. Topol. \textbf{19} (2015), no.~5, 2465--2534.

\bibitem[GLR22]{DONUT}
Barbara Giunti, J{\=a}nis Lazovskis, and Bastian Rieck, \emph{{DONUT}: {D}atabase of {O}riginal \& {N}on-{T}heoretical {U}ses of {T}opology}, 2022, \url{https://donut.topology.rocks}.

\bibitem[GM23]{geist-miller}
Nathan Geist and Ezra Miller, \emph{Global dimension of real-exponent polynomial rings}, Algebra Number Theory \textbf{17} (2023), no.~10, 1779--1788. \MR{4643302}

\bibitem[GNOW24]{MR4776401}
Barbara Giunti, John~S. Nolan, Nina Otter, and Lukas Waas, \emph{Amplitudes in persistence theory}, J. Pure Appl. Algebra \textbf{228} (2024), no.~12, Paper No. 107770, 35. \MR{4776401}

\bibitem[GR92]{GabrielRoiter1992}
P.~Gabriel and A.~V. Ro\u{\i}ter, \emph{Representations of finite-dimensional algebras}, Encyclopaedia Math. Sci., vol.~73, Springer, Berlin, 1992, With a chapter by B. Keller. \MR{1239447}

\bibitem[GS83]{gerstenhaber-schack}
Murray Gerstenhaber and Samuel~D. Schack, \emph{Simplicial cohomology is {H}ochschild cohomology}, J. Pure Appl. Algebra \textbf{30} (1983), no.~2, 143--156. \MR{722369}

\bibitem[Hap88]{happel1988}
Dieter Happel, \emph{Triangulated categories in the representation theory of finite-dimensional algebras}, London Mathematical Society Lecture Note Series, vol. 119, Cambridge University Press, Cambridge, 1988. \MR{935124}

\bibitem[HH24]{harsu2024ephemeral}
Manu Harsu and Eero Hyry, \emph{Ephemeral modules and scott sheaves on a continuous poset}, arXiv preprint arXiv:2411.16235 (2024).

\bibitem[Hil90]{hilbert}
David Hilbert, \emph{Ueber die {T}heorie der algebraischen {F}ormen}, Math. Ann. \textbf{36} (1890), no.~4, 473--534. \MR{1510634}

\bibitem[HL81]{hoeppner-lenzing}
Michael H\"{o}ppner and Helmut Lenzing, \emph{Projective diagrams over partially ordered sets are free}, J. Pure Appl. Algebra \textbf{20} (1981), no.~1, 7--12. \MR{596150}

\bibitem[HR24]{hanson-rock}
Eric~J. Hanson and Job~Daisie Rock, \emph{Decomposition of pointwise finite-dimensional {$\Bbb{S}^1$} persistence modules}, J. Algebra Appl. \textbf{23} (2024), no.~3, Paper No. 2450054, 24. \MR{4688822}

\bibitem[HST18]{hiraoka-et-al}
Yasuaki Hiraoka, Tomoyuki Shirai, and Khanh~Duy Trinh, \emph{Limit theorems for persistence diagrams}, Ann. Appl. Probab. \textbf{28} (2018), no.~5, 2740--2780. \MR{3847972}

\bibitem[IRT23]{igusa-rock-todorov}
Kiyoshi Igusa, Job~D. Rock, and Gordana Todorov, \emph{Continuous quivers of type {$A$} ({I}) foundations}, Rend. Circ. Mat. Palermo (2) \textbf{72} (2023), no.~2, 833--868. \MR{4559075}

\bibitem[IZ90]{igusa-zacharia}
Kiyoshi Igusa and Dan Zacharia, \emph{On the cohomology of incidence algebras of partially ordered sets}, Comm. Algebra \textbf{18} (1990), no.~3, 873--887. \MR{1052771}

\bibitem[Jar20]{jardine2020persistent}
John~F Jardine, \emph{Persistent homotopy theory}, arXiv preprint arXiv:2002.10013 (2020).

\bibitem[JCC{\etalchar{+}}21]{jiang2021topological}
Yi~Jiang, Dong Chen, Xin Chen, Tangyi Li, Guo-Wei Wei, and Feng Pan, \emph{Topological representations of crystalline compounds for the machine-learning prediction of materials properties}, npj computational materials \textbf{7} (2021), no.~1, 28.

\bibitem[KB21]{assif-baryshnikov}
Mishal Assif~P K and Yuliy Baryshnikov, \emph{Biparametric persistence for smooth filtrations}, 2021, arXiv:2107.06800.

\bibitem[Kin08]{kinser-2}
Ryan Kinser, \emph{The rank of a quiver representation}, J. Algebra \textbf{320} (2008), no.~6, 2363--2387. \MR{2437505}

\bibitem[Kin10]{kinser}
\bysame, \emph{Rank functions on rooted tree quivers}, Duke Math. J. \textbf{152} (2010), no.~1, 27--92. \MR{2643056}

\bibitem[KKL25]{kim-kim-lee}
Donghan Kim, Woojin Kim, and Wonjun Lee, \emph{Super-polynomial growth of the generalized persistence diagram}, 41st {I}nternational {S}ymposium on {C}omputational {G}eometry, LIPIcs. Leibniz Int. Proc. Inform., vol. 332, Schloss Dagstuhl. Leibniz-Zent. Inform., Wadern, 2025, pp.~Art. No. 64, 20. \MR{4934417}

\bibitem[KM21]{kim-memoli}
Woojin Kim and Facundo M\'{e}moli, \emph{Generalized persistence diagrams for persistence modules over posets}, J. Appl. Comput. Topol. \textbf{5} (2021), no.~4, 533--581. \MR{4334502}

\bibitem[KM23]{kim-memoli-survey}
\bysame, \emph{Persistence over posets}, Notices Amer. Math. Soc. \textbf{70} (2023), no.~8, 1214--1225. \MR{4626730}

\bibitem[KM24]{kim-moore}
Woojin Kim and Samantha Moore, \emph{Bigraded {B}etti numbers and generalized persistence diagrams}, J. Appl. Comput. Topol. \textbf{8} (2024), no.~3, 727--760. \MR{4799028}

\bibitem[Knu08]{knudson}
Kevin~P. Knudson, \emph{A refinement of multi-dimensional persistence}, Homology Homotopy Appl. \textbf{10} (2008), no.~1, 259--281. \MR{2399474}

\bibitem[KS18]{kashiwara-schapira}
Masaki Kashiwara and Pierre Schapira, \emph{Persistent homology and microlocal sheaf theory}, J. Appl. Comput. Topol. \textbf{2} (2018), no.~1-2, 83--113. \MR{3873181}

\bibitem[KS21]{kashiwara-schapira-2}
\bysame, \emph{Piecewise linear sheaves}, Int. Math. Res. Not. IMRN (2021), no.~15, 11565--11584. \MR{4294126}

\bibitem[Lad08]{ladkani2008homological}
Sefi Ladkani, \emph{Homological properties of finite partially ordered sets}, Hebrew University, 2008.

\bibitem[Lan18]{landi-rank-stability}
Claudia Landi, \emph{The rank invariant stability via interleavings}, Research in computational topology, Assoc. Women Math. Ser., vol.~13, Springer, Cham, 2018, pp.~1--10. \MR{3904998}

\bibitem[LBD{\etalchar{+}}17]{lee-et-al}
Yongjin Lee, Senja~D Barthel, Pawe{\l} D{\l}otko, S~Mohamad Moosavi, Kathryn Hess, and Berend Smit, \emph{Quantifying similarity of pore-geometry in nanoporous materials}, Nature communications \textbf{8} (2017), no.~1, 15396.

\bibitem[Lei26]{leitao2026s}
Ant{\'o}nio Leit{\~a}o, \emph{It's all about covers: Persistent homology of cover refinements}, arXiv preprint arXiv:2602.22784 (2026).

\bibitem[Les02]{leszczynski1}
Zbigniew Leszczy\'{n}ski, \emph{The completely separating incidence algebras of tame representation type}, Colloquium Mathematicae \textbf{94} (2002), no.~2, 243--262 (eng).

\bibitem[Les03]{leszczynski2}
\bysame, \emph{Representation-tame incidence algebras of finite posets}, Colloquium Mathematicae \textbf{96} (2003), no.~2, 293--305 (eng).

\bibitem[Les15]{lesnick-focm}
Michael Lesnick, \emph{The theory of the interleaving distance on multidimensional persistence modules}, Found. Comput. Math. \textbf{15} (2015), no.~3, 613--650. \MR{3348168}

\bibitem[Les23]{lesnick-course}
\bysame, \emph{Notes on multiparameter persistence (for amat 840)}, University at Albany (2023).

\bibitem[Lim24]{lim}
Uzu Lim, \emph{Strange random topology of the circle}, 40th {I}nternational {S}ymposium on {C}omputational {G}eometry, LIPIcs. Leibniz Int. Proc. Inform., vol. 293, Schloss Dagstuhl. Leibniz-Zent. Inform., Wadern, 2024, pp.~Art. No. 70, 17. \MR{4757965}

\bibitem[LM24a]{lesnick2024nerve}
Michael Lesnick and Kenneth McCabe, \emph{Nerve models of subdivision bifiltrations}, arXiv preprint arXiv:2406.07679 (2024).

\bibitem[LM24b]{lesnick2024sparse}
\bysame, \emph{Sparse approximation of the subdivision-rips bifiltration for doubling metrics}, arXiv preprint arXiv:2408.16716 (2024).

\bibitem[LMO24]{lim-memoli-okutan}
Sunhyuk Lim, Facundo M\'emoli, and Osman~Berat Okutan, \emph{Vietoris-{R}ips persistent homology, injective metric spaces, and the filling radius}, Algebr. Geom. Topol. \textbf{24} (2024), no.~2, 1019--1100.

\bibitem[LMS24]{morozov-scoccola}
Yuan Luo, Dmitriy Morozov, and Luis Scoccola, \emph{Computing betti tables and minimal presentations of zero-dimensional persistent homology}, arXiv preprint arXiv:2410.22242 (2024).

\bibitem[Lou75]{loupias}
Mich\`ele Loupias, \emph{Indecomposable representations of finite ordered sets}, Representations of algebras ({P}roc. {I}nternat. {C}onf., {C}arleton {U}niv., {O}ttawa, {O}nt., 1974), Lecture Notes in Math., vol. Vol. 488, Springer, Berlin-New York, 1975, pp.~201--209. \MR{412210}

\bibitem[LS23]{lanari-scoccola}
Edoardo Lanari and Luis Scoccola, \emph{Rectification of interleavings and a persistent {W}hitehead theorem}, Algebr. Geom. Topol. \textbf{23} (2023), no.~2, 803--832. \MR{4587317}

\bibitem[LSB{\etalchar{+}}19]{lawson2019persistent}
Peter Lawson, Andrew~B Sholl, J~Quincy Brown, Brittany~Terese Fasy, and Carola Wenk, \emph{Persistent homology for the quantitative evaluation of architectural features in prostate cancer histology}, Scientific reports \textbf{9} (2019), no.~1, 1139.

\bibitem[LW15]{lesnick-wright-interactive}
Michael Lesnick and Matthew Wright, \emph{Interactive visualization of 2-{D} persistence modules}, arXiv preprint arXiv:1512.00180 (2015).

\bibitem[LW22]{lesnick-wright}
\bysame, \emph{Computing minimal presentations and bigraded {B}etti numbers of 2-parameter persistent homology}, SIAM J. Appl. Algebra Geom. \textbf{6} (2022), no.~2, 267--298. \MR{4429408}

\bibitem[Mai11]{mainini}
E.~Mainini, \emph{A description of transport cost for signed measures}, Zap. Nauchn. Sem. S.-Peterburg. Otdel. Mat. Inst. Steklov. (POMI) \textbf{390} (2011), no.~Teoriya Predstavleni\u{\i}, Dinamicheskie Sistemy, Kombinatornye Metody. XX, 147--181, 308--309. \MR{2870233}

\bibitem[MBW13]{morozov-et-al}
Dmitriy Morozov, Kenes Beketayev, and Gunther Weber, \emph{Interleaving distance between merge trees}, Discrete and Computational Geometry \textbf{49} (2013), no.~22-45, 52.

\bibitem[MH17]{mcinnes2017accelerated}
Leland McInnes and John Healy, \emph{Accelerated hierarchical density based clustering}, 2017 IEEE international conference on data mining workshops (ICDMW), IEEE, 2017, pp.~33--42.

\bibitem[Mil00]{miller-alexander-duality}
Ezra Miller, \emph{The {A}lexander duality functors and local duality with monomial support}, J. Algebra \textbf{231} (2000), no.~1, 180--234. \MR{1779598}

\bibitem[Mil20]{miller2020essential}
\bysame, \emph{Essential graded algebra over polynomial rings with real exponents}, arXiv preprint arXiv:2008.03819 (2020).

\bibitem[Mil23]{miller-stratification}
\bysame, \emph{Stratifications of real vector spaces from constructible sheaves with conical microsupport}, J. Appl. Comput. Topol. \textbf{7} (2023), no.~3, 473--489. \MR{4621004}

\bibitem[Mil25]{miller-siaga}
\bysame, \emph{Homological algebra of modules over posets}, SIAM J. Appl. Algebra Geom. \textbf{9} (2025), no.~3, 483--524. \MR{4941899}

\bibitem[Mit65]{mitchell-2}
Barry Mitchell, \emph{Theory of categories}, Pure and Applied Mathematics, vol. Vol. XVII, Academic Press, New York-London, 1965. \MR{202787}

\bibitem[Mit68]{mitchell-3}
\bysame, \emph{On the dimension of objects and categories. {II}. {F}inite ordered sets}, J. Algebra \textbf{9} (1968), 341--368. \MR{237605}

\bibitem[Mit78]{mitchell1978module}
\bysame, \emph{Some applications of module theory to functor categories}, Bull. Amer. Math. Soc. \textbf{84} (1978), no.~5, 867--885. \MR{499732}

\bibitem[ML98]{maclane}
Saunders Mac~Lane, \emph{Categories for the working mathematician}, second ed., Graduate Texts in Mathematics, vol.~5, Springer-Verlag, New York, 1998. \MR{1712872}

\bibitem[MMv11]{milosavljevic-morozov-skraba}
Nikola Milosavljevi\'{c}, Dmitriy Morozov, and Primo\v{z} \v{S}kraba, \emph{Zigzag persistent homology in matrix multiplication time}, Computational geometry ({SCG}'11), ACM, New York, 2011, pp.~216--225. \MR{2919613}

\bibitem[MMZ25]{medina-zhou}
Anibal~M Medina-Mardones and Ling Zhou, \emph{Persistent cohomology operations and gromov-hausdorff estimates}, arXiv preprint arXiv:2503.17130 (2025).

\bibitem[MN13]{mischaikow-nanda}
Konstantin Mischaikow and Vidit Nanda, \emph{Morse theory for filtrations and efficient computation of persistent homology}, Discrete Comput. Geom. \textbf{50} (2013), no.~2, 330--353. \MR{3090522}

\bibitem[MN26]{mcfaddin-et-al}
Patrick~K. McFaddin and Tom Needham, \emph{Interleaving distances, monoidal actions and 2-categories}, Algebr. Geom. Topol. \textbf{26} (2026), no.~1, 227--281. \MR{5018430}

\bibitem[Mor40]{morse}
Marston Morse, \emph{Rank and span in functional topology}, Ann. of Math. (2) \textbf{41} (1940), 419--454.

\bibitem[MP23]{morozov2023}
Dmitriy Morozov and Amit Patel, \emph{Output-sensitive computation of generalized persistence diagrams for 2-filtrations}, 2023.

\bibitem[MP25]{mccleary-patel}
Alexander McCleary and Amit Patel, \emph{Erratum: ``{E}dit distance and persistence diagrams over lattices''}, SIAM J. Appl. Algebra Geom. \textbf{9} (2025), no.~2, 481--482. \MR{4925170}

\bibitem[MS05]{miller-strumfels}
Ezra Miller and Bernd Sturmfels, \emph{Combinatorial commutative algebra}, Graduate Texts in Mathematics, vol. 227, Springer-Verlag, New York, 2005. \MR{2110098}

\bibitem[MS24]{mallory-sayrafi}
Devlin Mallory and Mahrud Sayrafi, \emph{Computing direct sum decompositions}, 2024.

\bibitem[MS25]{morozov-skraba}
Dmitriy Morozov and Primoz Skraba, \emph{Persistent (co)homology in matrix multiplication time}, 41st {I}nternational {S}ymposium on {C}omputational {G}eometry, LIPIcs. Leibniz Int. Proc. Inform., vol. 332, Schloss Dagstuhl. Leibniz-Zent. Inform., Wadern, 2025, pp.~Art. No. 68, 16. \MR{4934421}

\bibitem[Mun84]{munkres}
James~R. Munkres, \emph{Elements of algebraic topology}, Addison-Wesley Publishing Company, Menlo Park, CA, 1984. \MR{755006}

\bibitem[MW24]{mader-waas}
Tim M\"{a}der and Lukas Waas, \emph{From samples to persistent stratified homotopy types}, J. Appl. Comput. Topol. \textbf{8} (2024), no.~3, 761--838. \MR{4799029}

\bibitem[MZ24]{memoli-zhou}
Facundo M\'emoli and Ling Zhou, \emph{Ephemeral persistence features and the stability of filtered chain complexes}, J. Comput. Geom. \textbf{15} (2024), no.~2, 258--328. \MR{4878036}

\bibitem[NCBJ24]{natarajan2024morse}
Abhinav Natarajan, Thomas Chaplin, Adam Brown, and Maria-Jose Jimenez, \emph{Morse theory for chromatic delaunay triangulations}, arXiv preprint arXiv:2405.19303 (2024).

\bibitem[NHH{\etalchar{+}}15]{nakamura-et-al}
Takenobu Nakamura, Yasuaki Hiraoka, Akihiko Hirata, Emerson~G Escolar, and Yasumasa Nishiura, \emph{Persistent homology and many-body atomic structure for medium-range order in the glass}, Nanotechnology \textbf{26} (2015), no.~30, 304001.

\bibitem[NR72]{nazarova-roiter}
L.~A. Nazarova and A.~V. Ro\u{\i}ter, \emph{Representations of partially ordered sets}, Zap. Nau\v{c}n. Sem. Leningrad. Otdel. Mat. Inst. Steklov. (LOMI) \textbf{28} (1972), 5--31, Investigations on the theory of representations. \MR{340121}

\bibitem[NR73]{nazarova-roiter-2}
Lyudmyla Nazarova and Andrei Ro\u{i}ter, \emph{{Kategornye matrichnye zadachi i problema Brau\'era-Tr\'ella}}, Izdat. ``Naukova Dumka'', Kiev, 1973. \MR{412233}

\bibitem[NSW08]{niyogi-smale-weinberger}
Partha Niyogi, Stephen Smale, and Shmuel Weinberger, \emph{Finding the homology of submanifolds with high confidence from random samples}, Discrete Comput. Geom. \textbf{39} (2008), no.~1-3, 419--441. \MR{2383768}

\bibitem[Oh05]{oh}
Yong-Geun Oh, \emph{Construction of spectral invariants of {H}amiltonian paths on closed symplectic manifolds}, The breadth of symplectic and {P}oisson geometry, Progr. Math., vol. 232, Birkh\"auser Boston, Boston, MA, 2005, pp.~525--570.

\bibitem[OS24]{oudot-scoccola}
Steve Oudot and Luis Scoccola, \emph{On the stability of multigraded {B}etti numbers and {H}ilbert functions}, SIAM J. Appl. Algebra Geom. \textbf{8} (2024), no.~1, 54--88. \MR{4695718}

\bibitem[Oud]{oudot2024differential}
Steve Oudot, \emph{Differential calculus and optimization in persistence module categories}, To appear in the Proceedings of the 9th European Congress of Mathematics.

\bibitem[Oud15]{oudot-book}
Steve~Y. Oudot, \emph{Persistence theory: from quiver representations to data analysis}, Mathematical Surveys and Monographs, vol. 209, American Mathematical Society, Providence, RI, 2015. \MR{3408277}

\bibitem[OW26]{oudot2026function}
Steve Oudot and Lukas Waas, \emph{Function-rips complexes in persistent homotopy theory: Local stability and latschev theorems}, To appear in the Proceedings of the Symposium on Computational Geometry (2026).

\bibitem[Pat18]{patel-mobius}
Amit Patel, \emph{Generalized persistence diagrams}, J. Appl. Comput. Topol. \textbf{1} (2018), no.~3-4, 397--419. \MR{3975559}

\bibitem[PDHH15]{perea-time-series}
Jose~A Perea, Anastasia Deckard, Steve~B Haase, and John Harer, \emph{Sw1pers: Sliding windows and 1-persistence scoring; discovering periodicity in gene expression time series data}, BMC bioinformatics \textbf{16} (2015), no.~1, 257.

\bibitem[Pee11]{peeva}
Irena Peeva, \emph{Graded syzygies}, Algebra and Applications, vol.~14, Springer-Verlag London, Ltd., London, 2011. \MR{2560561}

\bibitem[PET{\etalchar{+}}14]{petri-et-al}
Giovanni Petri, Paul Expert, Federico Turkheimer, Robin Carhart-Harris, David Nutt, Peter~J Hellyer, and Francesco Vaccarino, \emph{Homological scaffolds of brain functional networks}, Journal of The Royal Society Interface \textbf{11} (2014), no.~101.

\bibitem[PRSZ20]{polterovich-rosen-samvelyan-zhang}
Leonid Polterovich, Daniel Rosen, Karina Samvelyan, and Jun Zhang, \emph{Topological persistence in geometry and analysis}, University Lecture Series, vol.~74, American Mathematical Soc., 2020.

\bibitem[PRY24]{paquette2024categories}
Charles Paquette, Job~Daisie Rock, and Emine Y{\i}ld{\i}r{\i}m, \emph{Categories of generalized thread quivers}, arXiv preprint arXiv:2410.14656 (2024).

\bibitem[PS16]{polterovich-shelukhin}
Leonid Polterovich and Egor Shelukhin, \emph{Autonomous {H}amiltonian flows, {H}ofer's geometry and persistence modules}, Selecta Math. (N.S.) \textbf{22} (2016), no.~1, 227--296.

\bibitem[PS26a]{patel2026mobius}
Amit Patel and Primoz Skraba, \emph{M{\"o}bius homology}, Transactions of the American Mathematical Society (2026).

\bibitem[PS26b]{patel-skraba}
\bysame, \emph{M{\"o}bius homology}, Transactions of the American Mathematical Society (2026).

\bibitem[Puu20]{puuska}
Ville Puuska, \emph{Erosion distance for generalized persistence modules}, Homology Homotopy Appl. \textbf{22} (2020), no.~1, 233--254. \MR{4040293}

\bibitem[RCK{\etalchar{+}}17]{rizvi-et-al}
Abbas~H Rizvi, Pablo~G Camara, Elena~K Kandror, Thomas~J Roberts, Ira Schieren, Tom Maniatis, and Raul Rabadan, \emph{Single-cell topological rna-seq analysis reveals insights into cellular differentiation and development}, Nature biotechnology \textbf{35} (2017), no.~6, 551--560.

\bibitem[Rei08]{reineke}
Markus Reineke, \emph{Moduli of representations of quivers}, Trends in representation theory of algebras and related topics, EMS Ser. Congr. Rep., Eur. Math. Soc., Z\"{u}rich, 2008, pp.~589--637. \MR{2484736}

\bibitem[Rin98]{ringel}
Claus~Michael Ringel, \emph{Exceptional modules are tree modules}, Proceedings of the {S}ixth {C}onference of the {I}nternational {L}inear {A}lgebra {S}ociety ({C}hemnitz, 1996), vol. 275/276, 1998, pp.~471--493. \MR{1628405}

\bibitem[Rot64]{rota}
Gian-Carlo Rota, \emph{On the foundations of combinatorial theory. {I}. {T}heory of {M}\"{o}bius functions}, Z. Wahrscheinlichkeitstheorie und Verw. Gebiete \textbf{2} (1964), 340--368 (1964). \MR{174487}

\bibitem[RS24]{rolle-scoccola}
Alexander Rolle and Luis Scoccola, \emph{Stable and consistent density-based clustering via multiparameter persistence}, J. Mach. Learn. Res. \textbf{25} (2024), Paper No. 258, 74. \MR{4810964}

\bibitem[Sch00]{schwarz}
Matthias Schwarz, \emph{On the action spectrum for closed symplectically aspherical manifolds}, Pacific J. Math. \textbf{193} (2000), no.~2, 419--461.

\bibitem[Sch20]{schweinhart-2}
Benjamin Schweinhart, \emph{Fractal dimension and the persistent homology of random geometric complexes}, Adv. Math. \textbf{372} (2020), 107291, 59.

\bibitem[SCL{\etalchar{+}}17]{MR3735858}
Martina Scolamiero, Wojciech Chach\'{o}lski, Anders Lundman, Ryan Ramanujam, and Sebastian \"{O}berg, \emph{Multidimensional persistence and noise}, Found. Comput. Math. \textbf{17} (2017), no.~6, 1367--1406. \MR{3735858}

\bibitem[Sco20]{scoccola2020locally}
Luis Scoccola, \emph{Locally persistent categories and metric properties of interleaving distances}, Ph.D. thesis, The University of Western Ontario (Canada), 2020.

\bibitem[SGB{\etalchar{+}}23]{scoccola-toroidal}
Luis Scoccola, Hitesh Gakhar, Johnathan Bush, Nikolas Schonsheck, Tatum Rask, Ling Zhou, and Jose~A. Perea, \emph{Toroidal coordinates: decorrelating circular coordinates with lattice reduction}, 39th {I}nternational {S}ymposium on {C}omputational {G}eometry, LIPIcs. Leibniz Int. Proc. Inform., vol. 258, Schloss Dagstuhl. Leibniz-Zent. Inform., Wadern, 2023, pp.~Art. No. 57, 20. \MR{4604042}

\bibitem[She13]{sheehy}
Donald~R. Sheehy, \emph{Linear-size approximations to the {V}ietoris-{R}ips filtration}, Discrete \& Computational Geometry \textbf{49} (2013), no.~4, 778--796.

\bibitem[She22]{shelukhin}
Egor Shelukhin, \emph{On the {H}ofer-{Z}ehnder conjecture}, Ann. of Math. (2) \textbf{195} (2022), no.~3, 775--839.

\bibitem[Sim92]{Simson1992}
Daniel Simson, \emph{Linear representations of partially ordered sets and vector space categories}, Algebra, Logic and Applications, vol.~4, Gordon and Breach Science Publishers, Montreux, 1992. \MR{1241646}

\bibitem[SLG23]{sale-et-al}
Nicholas Sale, Biagio Lucini, and Jeffrey Giansiracusa, \emph{Probing center vortices and deconfinement in su (2) lattice gauge theory with persistent homology}, Physical Review D \textbf{107} (2023), no.~3, 034501.

\bibitem[SLH25]{pmlr-v267-scoccola25a}
Luis Scoccola, Uzu Lim, and Heather~A. Harrington, \emph{Cover learning for large-scale topology representation}, Proceedings of the 42nd International Conference on Machine Learning (Aarti Singh, Maryam Fazel, Daniel Hsu, Simon Lacoste-Julien, Felix Berkenkamp, Tegan Maharaj, Kiri Wagstaff, and Jerry Zhu, eds.), Proceedings of Machine Learning Research, vol. 267, PMLR, 13--19 Jul 2025, pp.~53728--53756.

\bibitem[SLM23]{schneider-grid-cells}
Steffen Schneider, Jin~Hwa Lee, and Mackenzie~Weygandt Mathis, \emph{Learnable latent embeddings for joint behavioural and neural analysis}, Nature \textbf{617} (2023), no.~7960, 360--368.

\bibitem[Sma73]{smale}
Steve Smale, \emph{Global analysis and economics. {I}. {P}areto optimum and a generalization of {M}orse theory}, Dynamical systems ({P}roc. {S}ympos., {U}niv. {B}ahia, {S}alvador, 1971), Academic Press, New York-London, 1973, pp.~531--544.

\bibitem[SSL{\etalchar{+}}24]{pmlr-v235-scoccola24a}
Luis Scoccola, Siddharth Setlur, David Loiseaux, Mathieu Carri\`{e}re, and Steve Oudot, \emph{Differentiability and optimization of multiparameter persistent homology}, Proceedings of the 41st International Conference on Machine Learning (Ruslan Salakhutdinov, Zico Kolter, Katherine Heller, Adrian Weller, Nuria Oliver, Jonathan Scarlett, and Felix Berkenkamp, eds.), Proceedings of Machine Learning Research, vol. 235, PMLR, 21--27 Jul 2024, pp.~43986--44011.

\bibitem[SUP25]{spitz2025topological}
Daniel Spitz, Julian~M Urban, and Jan~M Pawlowski, \emph{Topological data analysis of the deconfinement transition in su (3) lattice gauge theory}, Physical Review D \textbf{111} (2025), no.~11, 114519.

\bibitem[TMMH14]{turner-et-al}
Katharine Turner, Yuriy Mileyko, Sayan Mukherjee, and John Harer, \emph{Fr\'{e}chet means for distributions of persistence diagrams}, Discrete Comput. Geom. \textbf{52} (2014), no.~1, 44--70. \MR{3231030}

\bibitem[Ush11]{usher}
Michael Usher, \emph{Boundary depth in {F}loer theory and its applications to {H}amiltonian dynamics and coisotropic submanifolds}, Israel J. Math. \textbf{184} (2011), 1--57.

\bibitem[Ush13]{usher-2}
\bysame, \emph{Hofer's metrics and boundary depth}, Ann. Sci. \'Ec. Norm. Sup\'er. (4) \textbf{46} (2013), no.~1, 57--128.

\bibitem[UZ16]{usher-zhang}
Michael Usher and Jun Zhang, \emph{Persistent homology and {F}loer-{N}ovikov theory}, Geom. Topol. \textbf{20} (2016), no.~6, 3333--3430.

\bibitem[Vit92]{viterbo}
Claude Viterbo, \emph{Symplectic topology as the geometry of generating functions}, Math. Ann. \textbf{292} (1992), no.~4, 685--710.

\bibitem[VUFF93]{frosini-3}
Alessandro Verri, Claudio Uras, Patrizio Frosini, and Massimo Ferri, \emph{On the use of size functions for shape analysis}, Biological cybernetics \textbf{70} (1993), no.~2, 99--107.

\bibitem[Waa24]{waas2024notes}
Lukas Waas, \emph{Notes on abelianity of categories of finitely encoded persistence modules}, arXiv preprint arXiv:2407.08666 (2024).

\bibitem[Wan75]{wan}
Y.~H. Wan, \emph{Morse theory for two functions}, Topology \textbf{14} (1975), no.~3, 217--228.

\bibitem[Wei94]{weibel}
Charles~A. Weibel, \emph{An introduction to homological algebra}, Cambridge Studies in Advanced Mathematics, vol.~38, Cambridge University Press, Cambridge, 1994. \MR{1269324}

\bibitem[Wei11]{weinberger}
Shmuel Weinberger, \emph{What is{$\ldots$}persistent homology?}, Notices Amer. Math. Soc. \textbf{58} (2011), no.~1, 36--39. \MR{2777589}

\bibitem[Wei19]{weinberger-2}
\bysame, \emph{Interpolation, the rudimentary geometry of spaces of {L}ipschitz functions, and geometric complexity}, Found. Comput. Math. \textbf{19} (2019), no.~5, 991--1011. \MR{4017679}

\bibitem[Whi55]{whitney}
Hassler Whitney, \emph{On singularities of mappings of euclidean spaces. {I}. {M}appings of the plane into the plane}, Ann. of Math. (2) \textbf{62} (1955), 374--410.

\bibitem[WNvdW{\etalchar{+}}21]{wilding-et-al}
Georg Wilding, Keimpe Nevenzeel, Rien van~de Weygaert, Gert Vegter, Pratyush Pranav, Bernard~JT Jones, Konstantinos Efstathiou, and Job Feldbrugge, \emph{Persistent homology of the cosmic web--i. hierarchical topology in $\lambda$cdm cosmologies}, Monthly Notices of the Royal Astronomical Society \textbf{507} (2021), no.~2, 2968--2990.

\bibitem[Xi02]{changchang}
Changchang Xi, \emph{Representation dimension and quasi-hereditary algebras}, Adv. Math. \textbf{168} (2002), no.~2, 193--212. \MR{1912131}

\bibitem[ZC05]{zomorodian-carlsson}
Afra Zomorodian and Gunnar Carlsson, \emph{Computing persistent homology}, Discrete Comput. Geom. \textbf{33} (2005), no.~2, 249--274. \MR{2121296}

\bibitem[ZS76]{zavadskij}
AG~Zavadskij and AS~Shkabara, \emph{Commutative quivers and matrix algebras of finite type}, Preprint IM-76-3, Institute of Mathematics AN USSR, Kiev (in Russian) (1976).

\end{thebibliography}
